\documentclass[a4paper,12pt]{article}
\usepackage{amsfonts}
\usepackage{amsmath}
\usepackage{amssymb}
\usepackage{amsthm}
\usepackage{graphicx}
\usepackage{tikz}
\usepackage[hidelinks]{hyperref}
\usepackage{marginnote}
\usepackage[labelformat=simple]{subcaption}

\usepackage[inline]{enumitem}
\usepackage{pdflscape}
\usepackage{afterpage}
\usepackage{color, colortbl}

\usepackage{array}
\newcolumntype{H}{>{\setbox0=\hbox\bgroup}c<{\egroup}@{}}

\usetikzlibrary{arrows,shapes, positioning, matrix, patterns}

\newtheorem{prop}{Proposition}
\numberwithin{prop}{section}
\newtheorem{lem}{Lemma}
\numberwithin{lem}{section}
\newtheorem{coro}{Corollary}
\numberwithin{coro}{section}

\numberwithin{teo}{section}

\theoremstyle{definition}

\numberwithin{defix}{section}
\newenvironment{defi}
  {\pushQED{\qed}\defix}
  {\popQED\enddefix}

\numberwithin{exex}{section}
\newenvironment{exe}
  {\pushQED{\qed}\exex}
  {\popQED\endexex}

\numberwithin{obsx}{section}
\newenvironment{obs}
  {\pushQED{\qed}\obsx}
  {\popQED\endobsx}

\newcommand{\sign}{\textnormal{sign}}

\providecommand{\keywords}[1]{\textit{Keywords:  } #1}

\providecommand{\MSC}[1]{\textit{2010 Mathematics subject classification:  } #1}

\title{The Lifting Bifurcation Problem \\on Feed-Forward Networks}
\author{
Pedro Soares\thanks{Departamento de  Matem\'{a}tica,
Faculdade de Ci\^{e}ncias  da Universidade do Porto,
Centro de Matem\'{a}tica da Universidade do Porto
    (ptcsoares@fc.up.pt).}
}
\date{}

\begin{document}
\maketitle

\begin{abstract}
\noindent
We consider feed-forward networks, that is, networks where cells can be divided into layers, such that every edge targeting a layer, excluding the first one, starts in the prior layer.
A feed-forward system is a dynamical system that respects the structure of a feed-forward network.
The synchrony subspaces for a network, are the subspaces defined by equalities of some cells coordinates, that are flow-invariant by all the network systems.
The restriction of each network system to each synchrony subspace is a system associated with a smaller network, which may be, or not, a feed-forward network.
The original network is then said to be a lift of the smaller network.
We show that a feed-forward lift of a feed-forward network is given by the composition of two types of lifts: lifts that create new layers and lifts inside a layer.
Furthermore, we address the lifting bifurcation problem on feed-forward systems. 
More precisely, the comparison of the possible codimension-one local steady-state bifurcations of a feed-forward system and those of the corresponding lifts is considered.
We show that for most of the feed-forward lifts, the increase of the center subspace is a sufficient condition for the existence of additional bifurcating branches of solutions, which are not lifted from the restricted system.
However, when the bifurcation condition is associated  with the internal dynamics and the lifts occurs inside an intermediate layer, we prove that the existence of bifurcating branches of solutions that are not lifted from the restricted system does depend generically on the particular feed-forward system.
%

\vspace{1em}
\hspace{-1.8em}
\keywords{Feed-forward networks; Steady-state bifurcations; Lifting bifurcation problem.}

\vspace{1em}
\hspace{-1.8em}
\MSC{37G10; 34D06}
\end{abstract}

\section{Introduction}


Coupled cell networks describe influences between cells and are represented by graphs. 
A coupled cell system is characterized by an admissible vector field that respects the structure of the network and that governs the dynamics of each cell. 
Roughly speaking, a coupled cell system respects the structure of the network if the dynamics of a cell affects the dynamics of other cell, whenever there exists an edge on the network from the former to the latter. 
In \cite{SGP03, GST05}, the authors formalized the concepts of coupled cell network and coupled cell system. 
They also showed that there exists an intrinsic relation between coupled cell systems and networks, proving that a subspace defined by equalities of some of the network cell coordinates is an invariant subspace for any coupled cell system if and only if the coloring on the network set of cells determined by those equalities is balanced. 
Given a balanced coloring, the associated quotient network is given by merging cells with equal color. 
The original network is then said to be a lift of the smaller network.

Feed-forward networks correspond to a class of coupled cell networks are a particular coupled cell networks where the cells can be partitioned into layers. 
Consider feed-forward networks where each cell in the first layer only receives inputs from itself, cells in the other layers only receive inputs from cells in the previous layer and any two cells receive the same number of inputs.
Feed-forward networks have been studied in \cite{EG06,BGV07,GP12,RS13,G14,ADF17}. 
In \cite{ADF17}, the authors studied the balanced colorings of feed-forward networks. 
We will focus on lifts of feed-forward networks that have also a feed-forward structure.
Those lifts are given by the composition of two basic types of lifts: lifts that create new layers and lifts inside a layer. See Proposition~\ref{prop:compolift}. 
A lift that creates new layers is the replication of the first layer into consecutive layers.
A lift inside a layer is given by the division of cell within some layer.

In \cite{EG06, GP12}, the authors studied Hopf bifurcation on feed-forward networks and proved that the appearing periodic orbits have an amplitude of surprising order.
They assumed the phase space of each cell to be the two dimensional real space. 
In this work, we restrict our attention to one dimensional cell phase space and study (codimension-one steady-state) bifurcations at a full synchrony equilibrium.
It follows from the feed-forward structure that the Jacobian matrix of a feed-forward system at a full synchrony equilibrium has only two eigenvalues: the valency and the internal dynamics.
In this paper, we study the two different kinds of bifurcations associated to each eigenvalue.
In \cite{RS13}, the authors also studied bifurcations associated with the internal dynamics on feed-forward networks which have a semi-group structure and only one cell in each layer. 
They introduced, what we call, square-root-order of a bifurcation branch that measure the growth of a bifurcation branch, Definition~\ref{def:RS1322}.
Exploiting the techniques presented in \cite{RS13}, we give a characterization of the steady-state bifurcation branches associated with the internal dynamics on a feed-forward network in terms of their square-root-orders and slopes. See Proposition~\ref{prop:equivtheta}. 


Last, we study the lifting bifurcation problem on feed-forward networks for the two basic types of lifts on feed-forward networks.
The restriction of a lift system to a synchrony subspace is a feed-forward system. 
Thus any bifurcation branch occurring for a feed-forward system corresponds to a bifurcation branch occurring for the lift system.
The lifting bifurcation problem asks if there are more bifurcation branches occurring for the lift system.
This problem was first raised in \cite{ADGL09} where the authors proved that there are networks which have more bifurcations branches on some lift systems than the ones lifted from the original network. 
This problem was also studied in \cite{M14, NRS17}.
A well-know result gives a necessary condition for the lifting bifurcation problem:  
There can be more bifurcation branches on a lifted network than the ones lifted only if the center subspace of the coupled cell systems associated to the original network and the lift network have different dimensions. See Corollary~\ref{coro:eigspainv}.

Frequently, this condition is also sufficient for the lifting bifurcation problem and we prove it in the following cases.
For lifts that create new layers and inside a layer, and feed-forward systems with a bifurcation condition associated to the valency, Proposition~\ref{prop:lbpval}.
For lifts that create new layers, inside the first layer and inside a layer which has only one cell in the next layer, and feed-forward systems with a bifurcation condition associated to the internal dynamics, Propositions~\ref{prop:lbpintdynfirstnew} and \ref{prop:lbpintdynextonecell}.
Moreover, the previous cases do not depend generically on the feed-forward system.

Considering lifts inside an intermediate layer and feed-forward systems with a bifurcation condition associated to the internal dynamics, the center subspaces of the feed-forward system and of the lift system have different dimensions.
We show, for lifts inside an intermediate layer, that there exists an open set of coupled cell systems with a bifurcation associated to the internal dynamics such that there are more bifurcation branches on the lifted network than the ones lifted from the original network, Proposition~\ref{prop:liftbifbrainliftinsidelayerusingbalcol}. 
Remarkably, for a class of feed-forward networks and lifts inside an intermediate layer, there is also an open set of feed-forward systems with a bifurcation associated to the internal dynamics such that there are no more bifurcation branches on the feed-forward lift, Propositions~\ref{prop:genkerincnonewbifseclay} and \ref{prop:genkerincnonewbif}.


The paper is organized as follows. 
In Section~\ref{sec:ffn}, we recall the definitions of coupled cell networks and feed-forward networks.
Next, we study lifts of a feed-forward network which preserve the feed-forward structure (Section~\ref{sec:liftffn}).
Coupled cell systems and feed-forward systems are recalled in Section~\ref{sec:ffs}.
Then we analyze the steady-state bifurcations in feed-forward systems with bifurcation conditions associated to the valency (Section~\ref{sec:bifffnval}) and the internal dynamics (Section~\ref{sec:ffnssbar}).
Finally, we study the lifting bifurcation problem for feed-forward systems with bifurcation conditions associated to the valency (Section~\ref{sec:lbpffnval}) and the internal dynamics (Section~\ref{sec:lbfffnintdy}).
Many results assume conditions on the feed-forward networks and we provide, throughout the paper, examples showing that those conditions are necessary.

\section{Feed-forward networks}\label{sec:ffn}

In this section, we recall a few facts concerning coupled cell networks, following \cite{GST05, RS14}, and define feed-forward networks.

\begin{defi}
A \emph{network} $N$ is defined by a directed graph with a finite set of cells $C$ and a finite sets of directed edges divided by types $E_1,\dots,E_k$. 
We assume that each cell $c$ is target by one and only one edge of each type.
We denote by $|N|$ the number of cells in the network $N$. 

Let $(\sigma_{i}: C \rightarrow C)_{i=1}^{k}$ be the collection of functions such that there exists an edge $e\in E_i$ from $\sigma_{i}(c)$ to $c$, for every $c\in C$ and $1\leq i\leq k$. We say that $N$ is \emph{represented by the functions $(\sigma_{i}: C \rightarrow C)_{i=1}^{k}$}.
\end{defi}

Let $N$ and $N'$ be two networks represented by the functions  $\{\sigma_i:C\rightarrow C\}_{i=1}^k$ and $\{\sigma'_i:C'\rightarrow C'\}_{i=1}^k$, respectively.
We say that $N$ and $N'$ \emph{are equal} and write that $N=N'$ if there exists a bijection $\varphi:C\rightarrow C'$ such that $\varphi(\sigma_i(c))=\sigma'_i(\varphi(c))$, for every $1\leq i\leq k$ and $c\in C$.
Graphically, we use different connections to distinguish the edge's type. See the network in Figure~\ref{fig:liftinlayerbackconenecright}.


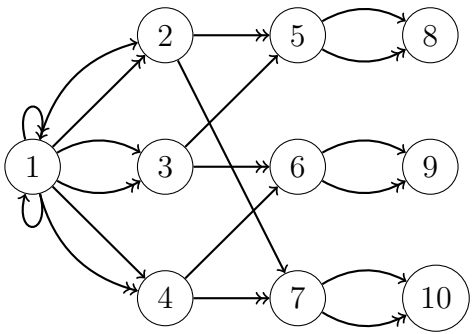
\begin{figure}[h]
\centering
\begin{tikzpicture}
\node (n1) [circle,draw]   {2};
\node (n2) [circle,draw]  [below=of n1] {3};
\node (n3) [circle,draw]  [below=of n2] {4};
\node (n4) [circle,draw] [right=of n1]  {5};
\node (n5) [circle,draw] [below=of n4]  {6};
\node (n6) [circle,draw]  [below=of n5] {7};
\node (n0) [circle,draw]  [left=of n2] {1};
\node (n7) [circle,draw]  [right=of n4] {8};
\node (n8) [circle,draw]  [right=of n5] {9};
\node (n9) [circle,draw]  [right=of n6] {10};

\draw[->, thick] (n0) to  [loop below] (n0);
\draw[->, thick] (n0) to  [bend left] (n1);
\draw[->, thick] (n0) to  [bend left]  (n2);
\draw[->, thick] (n0) to    (n3);
\draw[->, thick] (n2) to   (n4);
\draw[->, thick] (n3) to   (n5);
\draw[->, thick] (n1) to   (n6);
\draw[->, thick] (n4) to  [bend left] (n7);
\draw[->, thick] (n5) to  [bend left] (n8);
\draw[->, thick] (n6) to  [bend left] (n9);

\draw[->>, thick] (n0) to  [loop above] (n0);
\draw[->>, thick] (n0) to   (n1);
\draw[->>, thick] (n0) to  [bend right] (n2);
\draw[->>, thick] (n0) to  [bend right] (n3);
\draw[->>, thick] (n1) to   (n4);
\draw[->>, thick] (n2) to   (n5);
\draw[->>, thick] (n3) to   (n6);
\draw[->>, thick] (n4) to  [bend right] (n7);
\draw[->>, thick] (n5) to  [bend right] (n8);
\draw[->>, thick] (n6) to  [bend right] (n9);
\end{tikzpicture}
\caption{Feed-forward network with $4$ layers}
\label{fig:liftinlayerbackconenecright}
\end{figure}

A network $N$ can be also represented by its adjacency matrices $(A_i)_{i=1}^{k}$. More precisely, 
each matrix $A_i$, $i=1,\dots,k$, is an $|N|\times |N|$ matrix, where the entry $(A_i)_{c\:c'}$ is $1$, if $ c'=\sigma_i(c)$, and $0$, otherwise.

\begin{defi}
Let $N$ be a network represented by the functions $(\sigma_i)_{i=1}^k$. We say that $N$ is a \emph{feed-forward network (FFN)}, if there exists a partition of the set of cells of $N$ into subsets $C_0,C_1,\dots,C_m$ such that $\sigma_i(c)=c$, for every $c\in C_0$, and $\sigma_i(C_j)\subseteq C_{j-1}$, for every $1\leq j\leq m$ and $1\leq i\leq k$.
The subset $C_j$ is called the \emph{$j$th layer} of $N$. 

We assume that every cell not belonging to the last layer, is a source of some edge targeting the next layer, i.e., for every $0\leq j\leq m-1$ and $c\in C_j$ there exists $c'\in C_{j+1}$ and $1 \leq i\leq k$ such that $c=\sigma_i(c')$.
\end{defi}

All feed-forward networks that we consider are connected, i.e., for every two distinct cells $c$ and $c'$ of $N$ there exists a sequence of cells $c_0,c_1,\dots,c_{l-1},c_l$ in $N$ such that $c'=c_0$, $c=c_l$  and there is an edge from $c_{a-1}$ to $c_a$ or an edge from $c_a$ to $c_{a-1}$, for every $1\leq a\leq l$. 

\begin{defi}
We say that a network $N$ is \emph{backward connected for a cell} $c$ if for every cell $c'$ different from $c$ there exists a sequence of cells $c_0,c_1,\dots,c_{l-1},c_l$ in $N$ such that $c'=c_0$, $c=c_l$  and there is an edge from $c_{a-1}$ to $c_a$, for every $1\leq a\leq l$.
The network $N$ is \emph{backward connected} if it is backward connect for some cell.
\end{defi}


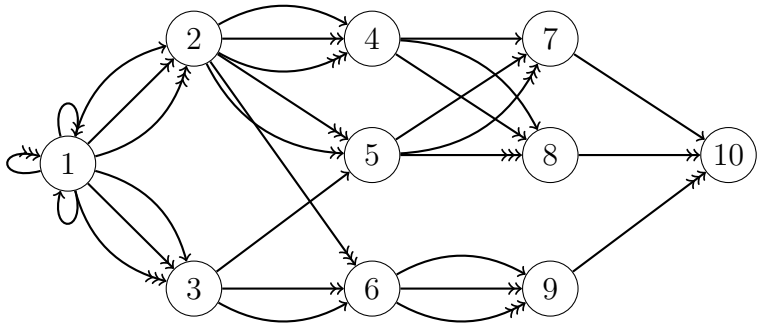
\begin{figure}[h]
\center
\begin{tikzpicture}[node distance=1.6cm]

\node (n1) [circle,draw]   {1};
\node (n2) [circle,draw]  [above right=of n1] {2};
\node (n3) [circle,draw] [below right=of n1]  {3};
\node (n4) [circle,draw] [right=of n2]  {4};
\node (n5) [circle,draw]  [below=of n4,yshift=8mm] {5};
\node (n6) [circle,draw] [right=of n3]  {6};
\node (n7) [circle,draw] [right=of n4]  {7};
\node (n8) [circle,draw] [right=of n5]  {8};
\node (n9) [circle,draw] [right=of n6]  {9};
\node (n10) [circle,draw,label=center:10] [right=of n8]  {\phantom{0}};

\draw[->, thick] (n1) to  [loop below] (n1);
\draw[->, thick] (n1) to  [bend left]  (n2);
\draw[->, thick] (n1) to  [bend left] (n3);
\draw[->, thick] (n2) to  [bend left] (n4);
\draw[->, thick] (n3) to   (n5);
\draw[->, thick] (n3) to  [bend right]  (n6);
\draw[->, thick] (n4) to    (n7);
\draw[->, thick] (n4) to  [bend left] (n8);
\draw[->, thick] (n6) to  [bend left] (n9);
\draw[->, thick] (n7) to  (n10);

\draw[->>, thick] (n1) to  [loop above] (n1);
\draw[->>, thick] (n1) to   (n2);
\draw[->>, thick] (n1) to   (n3);
\draw[->>, thick] (n2) to   (n4);
\draw[->>, thick] (n2) to  [bend right] (n5);
\draw[->>, thick] (n3) to   (n6);
\draw[->>, thick] (n5) to  (n7);
\draw[->>, thick] (n4) to  (n8);
\draw[->>, thick] (n6) to  (n9);
\draw[->>, thick] (n8) to   (n10);

\draw[->>>, thick] (n1) to  [loop left] (n1);
\draw[->>>, thick] (n1) to  [bend right] (n2);
\draw[->>>, thick] (n1) to  [bend right] (n3);
\draw[->>>, thick] (n2) to  [bend right] (n4);
\draw[->>>, thick] (n2) to   (n5);
\draw[->>>, thick] (n2) to   (n6);
\draw[->>>, thick] (n5) to   [bend right] (n7);
\draw[->>>, thick] (n5) to   (n8);
\draw[->>>, thick] (n6) to  [bend right] (n9);
\draw[->>>, thick] (n9) to   (n10);
\end{tikzpicture}
\caption{Backward connected feed-forward network with 5 layers.}
\label{fig:sufcondisnotneccond}
\end{figure}

The network in Figure~\ref{fig:sufcondisnotneccond} is backward connected (for the cell $10$) and the network in Figure~\ref{fig:liftinlayerbackconenecright} is not backward connected.

\begin{obs}
A feed-forward network is backward connected if and only if the cardinality of the last layer is $1$.
\end{obs}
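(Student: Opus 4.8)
The plan is to prove both implications directly from the definition of backward connectedness together with the layered structure of a feed-forward network. Write the layers as $C_0, C_1, \dots, C_m$, and recall that in a feed-forward network every cell $c \in C_j$ with $j < m$ is the source of at least one edge targeting $C_{j+1}$, while every edge into $C_j$ (for $j \geq 1$) originates in $C_{j-1}$, and the cells of $C_0$ carry only self-loops.

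First I would prove the ``if'' direction. Suppose $|C_m| = 1$, say $C_m = \{c\}$. I claim $N$ is backward connected for $c$. Given any cell $c'$ in a layer $C_j$, I want a directed path from $c'$ to $c$. The key observation is that, by the ``every non-last cell is a source into the next layer'' hypothesis, from any cell in $C_j$ (with $j<m$) there is a directed edge to \emph{some} cell in $C_{j+1}$; iterating, starting at $c'$ one builds a directed path $c' = c_j \to c_{j+1} \to \cdots \to c_m$ with $c_i \in C_i$. Since $C_m$ has a unique element, $c_m = c$, so this is a directed path from $c'$ to $c$. Hence $N$ is backward connected for $c$, and therefore backward connected. (If $j = m$ then $c' = c$ and there is nothing to show.)

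Next I would prove the ``only if'' direction, ideally by contrapositive: assume $|C_m| \geq 2$ and show $N$ is not backward connected for any cell $c$. Fix any candidate cell $c$, lying in some layer $C_j$. Because every edge into layer $C_i$ ($i \geq 1$) comes from $C_{i-1}$ and cells of $C_0$ have only self-loops, any directed path has the property that its sequence of layer-indices is non-increasing, and in fact strictly decreasing until it possibly stabilizes at $0$ via self-loops; in particular, no directed edge ever enters $C_m$ from outside, and no directed edge leaves $C_m$ (there is no layer $C_{m+1}$), so the only directed path ending at a cell of $C_m$ is the trivial one. Concretely: pick a cell $c' \in C_m$ with $c' \neq c$ if $c \in C_m$, or any $c' \in C_m$ otherwise — possible since $|C_m|\ge 2$, or since $j<m$. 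Then there is no directed path from $c'$ to $c$: such a path would have to leave $C_m$ at its first step, contradicting that no edge has its source in $C_m$ (equivalently, the layer index cannot increase and $c'$ is already at the top, and it cannot stay in $C_m$ because cells in $C_m$, $m\ge 1$, have no self-loops as their unique edges of each type go to $C_{m-1}$). Hence $N$ is not backward connected for $c$; since $c$ was arbitrary, $N$ is not backward connected.

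The main obstacle — really the only subtle point — is handling the layer $C_0$ correctly in the ``only if'' direction: cells in $C_0$ do have self-loops, so directed paths are not forced to be strictly layer-decreasing at that level, and one must make sure the argument that ``no path leaves $C_m$'' is airtight rather than conflating $C_m$ with $C_0$. This is clean as long as $m \geq 1$ (so $C_m \neq C_0$); the degenerate case $m = 0$, where $N$ consists of a single layer of self-looped cells, should be noted separately — there a backward-connected network must be a single cell, consistent with $|C_m| = |C_0| = 1$.
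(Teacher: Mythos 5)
The paper states this as a remark without proof, so there is nothing to compare against; your two-direction argument is the natural one and its core is sound. The ``if'' direction is fine: the hypothesis that every cell outside the last layer is the source of an edge into the next layer lets you walk any $c'\in C_j$ up to the unique cell of $C_m$. The ``only if'' direction also works, but only because of the last, concrete observation you make --- namely that for $m\geq 1$ no edge has its source in $C_m$ (sources of edges lie in $C_0\cup\dots\cup C_{m-1}$, since $\sigma_i(C_j)\subseteq C_{j-1}$ and $\sigma_i(C_0)=C_0$), so no nontrivial directed path can start at a cell of $C_m$, and any $c'\in C_m\setminus\{c\}$ witnesses failure of backward connectedness for $c$. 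The intermediate claims you state before that are wrong and should be deleted or reversed: along a directed path the layer index is \emph{non-decreasing} (an edge into $C_i$, $i\geq 1$, comes from $C_{i-1}$, so indices go up, not down), and directed edges certainly do enter $C_m$ from $C_{m-1}$; consequently ``the only directed path ending at a cell of $C_m$ is the trivial one'' is false --- what is true, and what your argument actually uses, is that the only directed path \emph{starting} at a cell of $C_m$ is the trivial one. None of this breaks the proof, since the final argument does not depend on those claims, but as written they would mislead a reader. Your handling of the degenerate case $m=0$ (where connectedness forces a single cell) is appropriate.
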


\section{Lifts of feed-forward networks}\label{sec:liftffn}

We recall now a few facts about balanced colorings, quotient networks and lifts of networks, following \cite{SGP03,GST05,RS14} with emphasis at feed-forward networks. 
We define two types of basic lifts in feed-forward networks: lifts inside a layer and lifts that create new layers. 
We prove that any lift of a feed-forward network to a backward connected feed-forward network can be obtained by a composition of lifts that create new layers and lifts inside the layers.

Let $N$ be a network represented by the functions $(\sigma_i)_{i=1}^k$. 
A \emph{coloring} of the set of cells of $N$ is an equivalence relation on the set of cells. 
A coloring $\bowtie$ is \emph{balanced} if $\sigma_i(c)\bowtie\sigma_i(c')$, for every $1\leq i \leq k$ and cells $c,c'$ of $N$ such that $c\bowtie c'$. 
It follows from \cite[Proposition 7.2]{RS15} that this definition coincides with the definition of balanced coloring given in \cite[Definition 4.1]{GST05}.
Given a subset of cells $S$ in $N$, we denote by $[S]_{\bowtie}$ the set of $\bowtie$-classes of the cells in $S$, i.e. $[S]_{\bowtie}=\{[c]_{\bowtie}:c\in S\}$. When defining or referring to a coloring, we omit its classes that have only one element.

\begin{defi}[{\cite[Section 5]{GST05}}]
 The \emph{quotient network} of a network $N$, represented by the functions $(\sigma_i:C\rightarrow C)_{i=1}^k$, associated to a balanced coloring $\bowtie$ in $N$ is the network represented by the functions $(\sigma_i^{\bowtie}:[C]_{\bowtie}\rightarrow [C]_{\bowtie})_{i=1}^k$, where
$\sigma_i^{\bowtie}$ is given by  $\sigma_i^{\bowtie}([c]_{\bowtie})=[\sigma_i(c)]_{\bowtie}$, for every $1\leq i\leq k$ and $c\in C$. We denote by $N/\bowtie$ the quotient network of $N$ associated to $\bowtie$.
We also say that a network $L$ is a \emph{lift} of $N$, if $N$ is a quotient of $L$ for some balanced coloring in $L$.
\end{defi}

\begin{obs}
The quotient network of a backward connected network is also backward connected.
\end{obs}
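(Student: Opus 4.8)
The plan is to show directly that if $N$ is backward connected for a cell $c$, then the quotient $N/\bowtie$ is backward connected for the class $[c]_{\bowtie}$. The key observation, which I would isolate first, is that the map on cells $c\mapsto[c]_{\bowtie}$ carries edges of $N$ to edges of $N/\bowtie$: if there is an edge of type $i$ from $u$ to $v$ in $N$, i.e.\ $u=\sigma_i(v)$, then by the definition of the quotient $\sigma_i^{\bowtie}([v]_{\bowtie})=[\sigma_i(v)]_{\bowtie}=[u]_{\bowtie}$, so there is an edge of type $i$ from $[u]_{\bowtie}$ to $[v]_{\bowtie}$ in $N/\bowtie$. Consequently any sequence of cells $c_0,c_1,\dots,c_l$ of $N$ with an edge from $c_{a-1}$ to $c_a$ for each $a$ maps to the sequence $[c_0]_{\bowtie},\dots,[c_l]_{\bowtie}$ in $N/\bowtie$ having the analogous property.

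Next I would fix an arbitrary $\bowtie$-class $D$ with $D\neq[c]_{\bowtie}$ and choose a representative $c'\in D$. Since $D\neq[c]_{\bowtie}$ we have $c'\neq c$, so backward connectedness of $N$ for $c$ provides a sequence $c'=c_0,c_1,\dots,c_l=c$ with an edge from $c_{a-1}$ to $c_a$ for every $1\leq a\leq l$. Applying the observation above, $[c']_{\bowtie}=[c_0]_{\bowtie},\dots,[c_l]_{\bowtie}=[c]_{\bowtie}$ is a sequence in $N/\bowtie$ of exactly the form required by the definition, which witnesses that $N/\bowtie$ is backward connected for $[c]_{\bowtie}$. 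As the definition only demands such a sequence of cells (not necessarily distinct), no pruning of repeated classes is needed and there is nothing further to verify.

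There is essentially no obstacle here: the whole content is the remark that the quotient construction turns edges into edges, after which the path lifts verbatim. The only point worth stating carefully is that $D\neq[c]_{\bowtie}$ forces the chosen representative $c'$ to differ from $c$, so that the hypothesis of backward connectedness of $N$ genuinely applies to $c'$.
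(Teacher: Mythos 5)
Your proof is correct and is exactly the argument the paper leaves implicit (the remark is stated without proof): the quotient map sends edges to edges since $\sigma_i^{\bowtie}([v]_{\bowtie})=[\sigma_i(v)]_{\bowtie}$, so a backward path from $c'$ to $c$ in $N$ descends to a backward path from $[c']_{\bowtie}$ to $[c]_{\bowtie}$ in $N/\bowtie$. Your care about $D\neq[c]_{\bowtie}$ forcing $c'\neq c$, and the observation that repeated classes in the image sequence are harmless, are both apt.
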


\begin{figure}[h]
\center
\begin{tikzpicture}
\node (n1) [circle,draw]   {2};
\node (n2) [circle,draw]  [below=of n1] {3};
\node (n4) [circle,draw] [right=of n1]  {5};
\node (n5) [circle,draw] [below=of n4]  {6};
\node (n6) [circle,draw]  [below=of n5] {7};
\node (n0) [circle,draw]  [left=of n2] {1};
\node (n7) [circle,draw]  [right=of n4] {8};
\node (n8) [circle,draw]  [right=of n5] {9};
\node (n9) [circle,draw]  [right=of n6] {10};

\draw[->, thick] (n0) to  [bend left] (n1);
\draw[->, thick] (n0) to   (n2);
\draw[->, thick] (n2) to   (n4);
\draw[->, thick] (n2) to  [bend left] (n5);
\draw[->, thick] (n1) to   (n6);
\draw[->, thick] (n0) to  [loop below] (n0);
\draw[->, thick] (n4) to  [bend left] (n7);
\draw[->, thick] (n5) to  [bend left] (n8);
\draw[->, thick] (n6) to  [bend left] (n9);

\draw[->>, thick] (n0) to  (n1);
\draw[->>, thick] (n0) to  [bend right] (n2);
\draw[->>, thick] (n1) to   (n4);
\draw[->>, thick] (n2) to  [bend right] (n5);
\draw[->>, thick] (n2) to   (n6);
\draw[->>, thick] (n0) to  [loop above] (n0);
\draw[->>, thick] (n4) to  [bend right] (n7);
\draw[->>, thick] (n5) to  [bend right] (n8);
\draw[->>, thick] (n6) to  [bend right] (n9);
\end{tikzpicture}
\caption{Quotient of the network in Figure~\ref{fig:liftinlayerbackconenecright} associated with the balanced coloring given by the class $\{3,4\}$.}
\label{fig:liftinlayerbackconenecleft}
\end{figure}
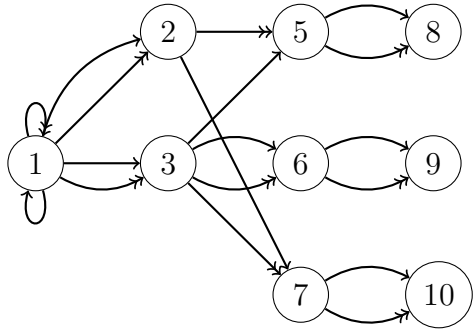

\begin{exe}
Consider the network $N$ in Figure~\ref{fig:liftinlayerbackconenecright} represented by the functions $(\sigma_1,\sigma_2)$ and the coloring $\bowtie$, where $\sigma_1(i)=\sigma_2(i)=1$, if $i=1,2,3,4$, $\sigma_1(7)=\sigma_2(5)=2$, $\sigma_1(5)=\sigma_2(6)=3$, $\sigma_1(6)=\sigma_2(7)=4$ and $\sigma_1(i)=\sigma_2(i)=i-3$, if $i=8,9,10$, and $\bowtie$ is given by $3\bowtie 4$. The coloring $\bowtie$ is balanced, since $\sigma_i(3)\bowtie \sigma_i(4)$, for $i=1,2$.
The network in Figure~\ref{fig:liftinlayerbackconenecleft} is the quotient of $N$ associated to $\bowtie$.
\end{exe}

In \cite{ADF17}, the authors studied and described the balanced colorings of feed-forward networks.
The set of balanced colorings forms a partially order set as studied in \cite{S07} given by the refinement relation. 
Given two balanced colorings $\bowtie', \bowtie$ of a network $N$, we say that $\bowtie'$ refines $\bowtie$ and we write $\bowtie'\preceq \bowtie$, if $c\bowtie' d$ implies that $c\bowtie d$, for every cells $c$ and $d$ of $N$. 
We have that if $\bowtie'\preceq \bowtie$, then $N/\bowtie$ is a quotient of $N/\bowtie'$.

If $L$, $N$ and $Q$ are networks such that $L$ is a lift of $N$ and $N$ is a lift of $Q$, then $L$ is a lift of $Q$.
Moreover, we say that the lift of $Q$ to $L$ is given by the composition of the lift of $Q$ to $N$ and the lift of $N$ to $L$.
In some cases, a lift can be seen as the composition of two lifts, see \cite[Theorem 2.4]{DM17}. 
In the next result, we give a sufficient condition for that to occur. 

\begin{lem}\label{splitstep}
Let $L$ be a network represented by the functions $(\sigma_i:C\rightarrow C)_{i=0}^k$, $\bowtie$ a balanced coloring in $L$ and $S\subseteq C$ such that $\sigma_i(S)\subseteq S$, for $1\leq i\leq k$.
Then, there exists a balanced coloring $\bowtie'$ in $L$ such that $L/\bowtie$ is a quotient of $L/\bowtie'$,
 $[C\setminus S]_{\bowtie'}= C\setminus S$ and there exists a bijection between $[S]_{\bowtie}$ and $[S]_{\bowtie'}$.
\end{lem}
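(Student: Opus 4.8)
The plan is to define $\bowtie'$ by keeping the $\bowtie$-classes inside $S$ and splitting every class that meets $C\setminus S$ into singletons. Formally, for cells $c,d\in C$ set $c\bowtie' d$ if and only if either $c=d$, or both $c,d\in S$ and $c\bowtie d$. Since $S$ is a union of $\bowtie'$-classes and on $S$ the relation $\bowtie'$ agrees with $\bowtie$, the map $[c]_{\bowtie'}\mapsto [c]_{\bowtie}$ is well defined on all of $[C]_{\bowtie'}$; it is surjective, and its restriction to $[S]_{\bowtie'}$ is a bijection onto $[S]_{\bowtie}$ (the required bijection), while $[C\setminus S]_{\bowtie'}=C\setminus S$ holds by construction. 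Moreover $\bowtie'\preceq\bowtie$ is immediate, so once we know $\bowtie'$ is balanced, the already-recalled fact that refinement implies $L/\bowtie$ is a quotient of $L/\bowtie'$ gives the conclusion.

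The heart of the argument is checking that $\bowtie'$ is balanced, i.e. that $c\bowtie' d$ implies $\sigma_i(c)\bowtie'\sigma_i(d)$ for every $1\le i\le k$. The case $c=d$ is trivial, so assume $c\ne d$; then by definition $c,d\in S$ and $c\bowtie d$. Because $\bowtie$ is balanced, $\sigma_i(c)\bowtie\sigma_i(d)$. The point is now to upgrade this to $\sigma_i(c)\bowtie'\sigma_i(d)$, and this is exactly where the hypothesis $\sigma_i(S)\subseteq S$ enters: since $c,d\in S$ we have $\sigma_i(c),\sigma_i(d)\in S$, and on $S$ the relations $\bowtie$ and $\bowtie'$ coincide, so $\sigma_i(c)\bowtie'\sigma_i(d)$ as needed. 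This is the only step that uses the forward-invariance of $S$, and it is the crux: without it a $\bowtie$-class inside $S$ could be mapped by some $\sigma_i$ onto a class straddling $C\setminus S$, which $\bowtie'$ has broken apart, destroying balance.

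The remaining verifications are routine: that $\bowtie'$ is an equivalence relation (reflexivity is built in, symmetry and transitivity follow from those of $\bowtie$ together with the fact that membership in $S$ is preserved within a class), that the quotient map $[C]_{\bowtie'}\to[C]_{\bowtie}$ intertwines the functions $\sigma_i^{\bowtie'}$ and $\sigma_i^{\bowtie}$ so that $L/\bowtie$ is genuinely a quotient of $L/\bowtie'$, and that the claimed identities on $[C\setminus S]_{\bowtie'}$ and the bijection $[S]_{\bowtie}\leftrightarrow[S]_{\bowtie'}$ hold. I expect no obstacle in any of these; the only substantive idea is the choice of $\bowtie'$ and the observation that $\sigma_i(S)\subseteq S$ is precisely what makes "refine outside $S$, leave $S$ untouched" produce a balanced coloring.
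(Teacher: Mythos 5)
Your proposal is correct and follows essentially the same route as the paper: define $\bowtie'$ by keeping the $\bowtie$-classes inside $S$ and making every cell of $C\setminus S$ a singleton, verify balance using exactly the forward-invariance $\sigma_i(S)\subseteq S$, and conclude via $\bowtie'\preceq\bowtie$. The identification of where the hypothesis is used matches the paper's argument precisely.
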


\begin{proof}
Let $L$ be a network represented by the functions $(\sigma_i:C\rightarrow C)_{i=0}^k$, $\bowtie$ a balanced coloring in $L$ and $S\subseteq C$ such that 
$\sigma_i(S)\subseteq S$, for $1\leq i\leq k$.

Define $\bowtie'$ as the coloring of $L$ such that $c\bowtie' c'$ if $c\bowtie c'$ and $c,c'\in S$. 
Let $c,c'\in S$ such that $c\bowtie' c'$. 
Then $c\bowtie c'$, $\sigma_i(c)\bowtie\sigma_i(c')$ and $\sigma_i(c),\sigma_i(c')\in S$, for every $1\leq i\leq k$.
Hence $\sigma_i(c)\bowtie'\sigma_i(c')$, for every $1\leq i\leq k$, and  $\bowtie'$ is a balanced coloring of $L$.
Note that $\bowtie'\preceq \bowtie$ and so $L/\bowtie$ is a quotient of $L/\bowtie'$.

The $\bowtie'$-class of any cell in $C\setminus S$ is singular, so $[C\setminus S]_{\bowtie'}= C\setminus S$.

Let $\alpha:[S]_{\bowtie} \rightarrow [S]_{\bowtie'}$ be given by $\alpha([c]_{\bowtie})=[c]_{\bowtie'}$, where $c\in S$.
Let $c,c'\in S$ such that $[c]_{\bowtie}=[c']_{\bowtie}$. 
Then $c\bowtie c'$ and $c\bowtie' c'$. 
So $\alpha$ is well-defined.
Suppose that  $\alpha([c]_{\bowtie})= \alpha([c']_{\bowtie})$.
Then $c\bowtie' c'$ and $c\bowtie c'$.
So $[c]_{\bowtie}=[c']_{\bowtie}$ and $\alpha$ is injective.
Let $[c]_{\bowtie'}\in [S]_{\bowtie'}$.
Then $\alpha([c]_{\bowtie})=[c]_{\bowtie'}$ and $\alpha$ is surjective. 
So $\alpha$ is a bijection between $[S]_{\bowtie}$ and $[S]_{\bowtie'}$. 
\end{proof}

Next, we define two basic lifts in feed-forward networks.

\begin{defi}
Let $N$ be a feed-forward network and $L$ a feed-forward lift of $N$. Denote the layers of $N$ and $L$ by $C_0,C_1,\dots,C_m$ and $C'_0,C'_1,\dots,C'_n$, respectively.

We say that $L$ is a \emph{lift inside the layer $C_j$}, where $ 0\leq j\leq m$, if $m=n$, $|C'_j|\neq |C_j|$ and $|C'_i|=|C_i|$ for every $i\neq j$.

We say that $L$ is a \emph{lift that creates $n-m$ new layers}, if $m<n$, $|C'_0|=|C'_1|=\dots=|C'_{n-m}|=|C_0|$ and $|C'_{n-m+j}|=|C_j|$, for every $1 \leq j \leq m$.
\end{defi}

The network in Figure~\ref{fig:liftinlayerbackconenecright} is a lift inside the second layer of the network in Figure~\ref{fig:liftinlayerbackconenecleft}.
In the following example, we apply Lemma~\ref{splitstep} and see that some lifts of feed-forward networks are the composition of lifts inside a layer.

\begin{exe}\label{exe:compliftinslay}
Let $N$ be a feed-forward network and $L$ a feed-forward lift of $N$. 
Suppose that $N$ and $L$ have the same number of layers and denote the layers of $L$ by $C_0,C_1,\dots,C_m$. 

Consider the set $S_{m-1}=C_0 \cup C_1 \cup \dots \cup C_{m-1}$ and apply Lemma~\ref{splitstep} to the lift $L$ of $N$, then there exists a network $Q_{m-1}$ such that the lift of $N$ to $L$ is the composition of the lift of $N$ to $Q_{m-1}$ and the lift of $Q_{m-1}$ to $L$.
It is not hard to prove that  $Q_{m-1}$ is also a feed-forward network with the same number of layers than $N$ and $L$. 
By Lemma~\ref{splitstep}, the layers of $Q_{m-1}$ and $N$ have the same number of cells, except the last one. 
Hence $Q_{m-1}$ is a lift of $N$ inside a layer (or $N=Q_{m-1}$).

We can repeat the previous process by considering the set $S_i=C_0 \cup C_1 \cup \dots \cup C_i$ and applying Lemma~\ref{splitstep} to the lift $L$ of $Q_{i+1}$, for each $m-2\geq i\geq 1$. 
In this way, we obtain a sequence of networks $Q_{m},Q_{m-1}, \dots, Q_1, Q_0$ such that $Q_{j+1}$ is a lift of $Q_j$ inside a layer (or $Q_{j+1}=Q_{j}$), where $j=0,\dots,m-2$, $L=Q_{m}$ and $N=Q_0$.
Therefore the lift of $N$ to $L$ is the composition of lifts inside the layers and a lift from $Q_{m-1}$ to $L$.

If $L$ is backward connected, then $Q_{m-1}=L$ and the lift of $N$ to $L$ is the composition of lifts inside the layers.
\end{exe}



 According to the results obtained in \cite{ADF17}, we give an auxiliary lemma.

\begin{lem}\label{layliftfeed}
Let $N$ be a feed-forward network with layers $C_0,C_1,\dots,C_m$ and $L$ a feed-forward lift of $N$ with layers $C'_0,C'_1,\dots,C'_n$ such that $L$ is a backward connected. Denote by $\bowtie$ the balanced coloring of $L$ such that $L/\bowtie=N$.
Then
$$[C'_{n-m}]_{\bowtie}=\dots=[C'_{0}]_{\bowtie}=C_{0}\quad \quad \quad [C'_{n-j}]_{\bowtie}=C_{m-j},$$
for $0\leq j\leq m-1$. 
\end{lem}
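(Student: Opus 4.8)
The plan is to study the quotient map $q\colon L\to N$, $q(c)=[c]_{\bowtie}$, which is onto and carries each edge of $L$ to an edge of $N$, and to pin down the layer of $q(c)$ in $N$ in terms of the layer of $c$ in $L$. Since $L$ is backward connected, its last layer is a singleton $C'_n=\{t\}$; as quotients of backward connected networks are backward connected, $N=L/\bowtie$ likewise has a singleton last layer $C_m=\{y\}$. Two elementary observations start the argument: (i) if $c\in C'_0$ then $\sigma_i(c)=c$ for every $i$, hence $\sigma_i^{\bowtie}([c]_{\bowtie})=[c]_{\bowtie}$, so $[c]_{\bowtie}$ is a loop cell of $N$ and therefore lies in $C_0$, giving $[C'_0]_{\bowtie}\subseteq C_0$; (ii) when $m\ge 1$, the cell $y$ has no outgoing edge in $N$, since an edge out of $C_m$ would target the non-existent layer $C_{m+1}$ and there are no loops outside $C_0$.

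The first main step is to show $[t]_{\bowtie}=y$ (if $m=0$ this is trivial, so assume $m\ge 1$). As $q$ is onto, $y=[c]_{\bowtie}$ for some cell $c$; if $c\neq t$, backward connectedness of $L$ for $t$ provides a directed path $c=c_0\to c_1\to\cdots\to c_l=t$ with $l\ge 1$, whose image starts with an edge out of $[c_0]_{\bowtie}=y$, contradicting (ii). Hence $[t]_{\bowtie}=y\in C_m$. Next, for an arbitrary cell $c\in C'_a$, using that every cell outside the last layer is a source of an edge into the next layer one builds a directed path $c\to c^{(1)}\to\cdots\to c^{(n-a)}=t$ climbing exactly one layer at each step; its image is a directed walk of length $n-a$ from $[c]_{\bowtie}$ to $y\in C_m$. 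In a feed-forward network a directed walk ending in the last layer either starts in $C_0$ (dwelling on a $C_0$-loop before climbing, so of length $\ge m$) or starts in some $C_b$ with $b\ge 1$ (climbing straight up, so of length exactly $m-b$). Matching against $n-a$, and using (i) to exclude the pathological case, forces $m\le n$, $[c]_{\bowtie}\in C_0$ when $a\le n-m$, and $[c]_{\bowtie}\in C_{a-(n-m)}$ when $a>n-m$. This already yields $[C'_a]_{\bowtie}\subseteq C_0$ for $0\le a\le n-m$ and $[C'_{n-j}]_{\bowtie}\subseteq C_{m-j}$ for $0\le j\le m-1$.

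It remains to turn these inclusions into equalities. For $C_{m-j}$ with $0\le j\le m-1$ this is automatic: $q$ is onto, the layers $C_0,\dots,C_m$ partition the cells of $N$, and by the previous paragraph only $[C'_{n-j}]_{\bowtie}$ can meet $C_{m-j}$, so $[C'_{n-j}]_{\bowtie}=C_{m-j}$. For $C_0$ one checks that \emph{each} $C'_a$ with $0\le a\le n-m$ surjects onto $C_0$. Given a loop cell $w\in C_0$, write $w=[c]_{\bowtie}$; by the layer formula $c\in C'_{a^{*}}$ with $a^{*}\le n-m$. Applying $\sigma_i$ lowers the layer by one, and since $w$ is a loop cell $\sigma_i^{\bowtie}(w)=w$, so the $\sigma_i$-images of $c$ still have class $w$; this reaches $w$ in every layer $C'_a$ with $0\le a\le a^{*}$. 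Upward, if $w=[d]_{\bowtie}$ with $d\in C'_b$ and $b<n-m$, choose $e\in C'_{b+1}$ with $\sigma_i(e)=d$; then $\sigma_i^{\bowtie}([e]_{\bowtie})=[d]_{\bowtie}=w$, and since $b+1\le n-m$ the layer formula makes $[e]_{\bowtie}$ a loop cell, forcing $[e]_{\bowtie}=\sigma_i^{\bowtie}([e]_{\bowtie})=w$; iterating reaches $w$ in every layer $C'_a$ with $a^{*}\le a\le n-m$. Hence $[C'_a]_{\bowtie}=C_0$ for all $0\le a\le n-m$, which completes the proof.

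I expect the main obstacle to be the step $[t]_{\bowtie}=y$: this is exactly where backward connectedness (hypothesised in the statement) is indispensable — without it the top cell of $L$ need not map to the top cell of $N$ — and it is the pivot that makes the layer count rigid. A secondary subtlety is the final upgrade to equalities on the collapsed layers $C'_0,\dots,C'_{n-m}$, which forces one to move cells up and down within $L$ using the source condition of a feed-forward network and the fact that $\sigma^{\bowtie}$ fixes loop cells. Alternatively, the layer formula above can be extracted from the description of the balanced colorings of feed-forward networks in \cite{ADF17}; the argument sketched here is self-contained.
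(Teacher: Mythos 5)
Your proof is correct, and it reaches the conclusion by a different organization than the paper's. The paper anchors the top layer ($[C'_n]_{\bowtie}=C_m$) and then runs a downward induction on $j$, proving the two inclusions $C_{m-j}\subseteq[C'_{n-j}]_{\bowtie}$ and $[C'_{n-j}]_{\bowtie}\subseteq C_{m-j}$ at each step via the source condition, and finally repeats the same induction through the collapsed layers using $\sigma_i(C_0)=C_0$. You instead determine, for each individual cell $c\in C'_a$, the layer of $[c]_{\bowtie}$ by a walk-length count (a directed path of length $n-a$ from $c$ to the top cell of $L$ projects to a directed walk of length $n-a$ ending in $C_m$, and in a feed-forward network that length pins down the starting layer), and then upgrade the resulting inclusions to equalities using surjectivity of the quotient map together with an up-and-down argument inside the collapsed layers. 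Both arguments rest on the same structural facts (edges join consecutive layers, the source condition, loop cells lie exactly in $C_0$), so neither buys extra generality, but your write-up is more careful on two points the paper passes over quickly: the justification that the top cell of $L$ maps to the top cell of $N$ — which is exactly where backward connectedness is indispensable, as Example~\ref{exe:feedforwliftnotdivi2} shows — and the verification that \emph{every} collapsed layer $C'_0,\dots,C'_{n-m}$ surjects onto $C_0$ rather than merely mapping into it. The paper's induction is shorter; yours makes the role of each hypothesis more visible.
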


\begin{proof}
Let $N$ be a feed-forward network with layers $C_0,C_1,\dots,C_m$ and $L$ a feed-forward lift of $N$ with layers $C'_0,C'_1,\dots,C'_n$ such that $L$ is a backward connected.
Assume that $N$ and $L$ are represented by $(\sigma_i)_{i=0}^k$ and $(\sigma'_i)_{i=0}^k$, respectively.
Let $\bowtie$ be a balanced coloring such that $L/\bowtie=N$.
Then $n\geq m$ and $[\sigma'_i(c)]_{\bowtie}=\sigma_i([c]_{\bowtie})$, for every cell $c$ in $L$ and $1\leq i\leq k$.

Since $L$ is backward connected, $N$ is also backward connected, $C'_n=\{c_1\}$, $C_m=\{[c_1]_{\bowtie}\}$ and $[C'_{n}]_{\bowtie}=C_{m}$.
If $m=0$, then $N$ has only one cell and there is only one equivalence class of $\bowtie$. 
Hence $[C'_{n}]_{\bowtie}=\dots=[C'_{0}]_{\bowtie}=C_{0}$. 


Now, suppose that $m>0$. 
Let $d_1\in C_{m-j}$ where $1\leq j\leq m$. Then there exist $1\leq i\leq k$ and $d_2\in C_{m+1-j}$ such that $d_1=\sigma_i(d_2)$.  Assuming that $[C'_{n+1-j}]_{\bowtie}=C_{m+1-j}$, there exists $d'_2\in C'_{n+1-j}$ such that $d_2=[d'_2]_{\bowtie}$, $d_1=[\sigma'_i(d'_2)]_{\bowtie}$ and $\sigma'_i(d'_2)\in C'_{n-j}$. Thus $C_{m-j}\subseteq [C'_{n-j}]_{\bowtie}$. On the other hand, let $d'_1\in C'_{n-j}$. Then there exist $1\leq i\leq k$ and $d'_2\in C'_{n+1-j}$ such that $d'_1=\sigma'_i(d'_2)$. Assuming that $[C'_{n+1-j}]_{\bowtie}=C_{m+1-j}$, we have that $[d'_2]_{\bowtie}\in C_{m+1-j}$ and $[\sigma'_i(d'_2)]_{\bowtie}\in C_{m-j}$. Therefore
$[C'_{n-j}]_{\bowtie}=C_{m-j}$.
Since $[C'_{n}]_{\bowtie}=C_{m}$, 
$$[C'_{n-j}]_{\bowtie}=C_{m-j}\quad \quad 0\leq j\leq m.$$

In particular, $[C'_{n-m}]_{\bowtie}=C_{0}$. From this and the fact that $\sigma_i(C_0)=C_0$, for $1\leq i\leq k$, using the same argument as above we conclude that
\[[C'_{n-m}]_{\bowtie}=\dots=[C'_{0}]_{\bowtie}=C_{0}.\qedhere\]
\end{proof}

Using the Example~\ref{exe:compliftinslay} and Lemma~\ref{layliftfeed}, we see how to decompose a feed-forward lift into lifts that create new layers and lifts inside a layers.

\begin{prop}\label{prop:compolift}
Let $N$ be a feed-forward network and $L$ a feed-forward lift of $N$ such that $L$ is backward connected. 
Then, the lift of $N$ to $L$ is the composition of a lift that creates new layers with lifts inside the layers.
\end{prop}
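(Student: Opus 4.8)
The plan is to combine the decomposition obtained in Example~\ref{exe:compliftinslay} with the layer identification in Lemma~\ref{layliftfeed}, handling the two possible cases according to whether $L$ has the same number of layers as $N$ or strictly more. First I would fix notation: let $\bowtie$ be the balanced coloring of $L$ with $L/\bowtie = N$, denote the layers of $N$ by $C_0,\dots,C_m$ and those of $L$ by $C'_0,\dots,C'_n$, and recall that since $L$ is backward connected, $n \geq m$ and Lemma~\ref{layliftfeed} gives $[C'_{n-m}]_{\bowtie} = \dots = [C'_0]_{\bowtie} = C_0$ and $[C'_{n-j}]_{\bowtie} = C_{m-j}$ for $0 \leq j \leq m-1$.

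If $n = m$, then $L$ and $N$ have the same number of layers, so by Example~\ref{exe:compliftinslay} (whose final sentence applies precisely because $L$ is backward connected) the lift of $N$ to $L$ is already the composition of lifts inside the layers, and we are done (a lift creating zero new layers being trivial). If $n > m$, the idea is to interpose an intermediate network $P$ obtained by collapsing exactly the first $n-m+1$ layers of $L$ down to copies of $C_0$. Concretely, consider the set $S = C'_{n-m} \cup C'_{n-m+1} \cup \dots \cup C'_n$, which satisfies $\sigma'_i(S) \subseteq S$ for all $i$ because of the feed-forward structure (edges into a layer come from the previous one, and $C'_{n-m}$ maps into $C'_{n-m}$ since its cells are in the image... more carefully, since $[C'_{n-m}]_{\bowtie}=C_0$ consists of classes fixed by each $\sigma_i^\bowtie$, one checks $\sigma'_i(C'_{n-m}) \subseteq C'_{n-m}$ using backward connectedness together with the $\bowtie$-class computation). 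Apply Lemma~\ref{splitstep} with this $S$: it yields a balanced coloring $\bowtie'$ refining... no — it yields $\bowtie'$ with $L/\bowtie$ a quotient of $L/\bowtie'$, with $[C \setminus S]_{\bowtie'} = C \setminus S$ and a bijection $[S]_{\bowtie} \leftrightarrow [S]_{\bowtie'}$. Set $P = L/\bowtie'$. Then $L \to P \to N$ is the desired factorization, and I would check that $P$ is a feed-forward network: its first $n - m + 1$ layers are collapsed to single copies of $C_0$ (so $P$ has layers of sizes $|C_0|, |C'_{n-m+1}|, \dots, |C'_n|$, i.e. $|C_0|,|C_{1}|,\dots,|C_m|$ after applying Lemma~\ref{layliftfeed} — wait, that would make $P = N$). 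Let me instead arrange things so $P$ is the network where only the extra layers $C'_0,\dots,C'_{n-m-1}$ are collapsed, leaving $C'_{n-m}$ through $C'_n$ untouched: take $S' = C'_{n-m} \cup \dots \cup C'_n$ but define the coloring that collapses $C' \setminus S'$ appropriately using $\bowtie$ restricted there.

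Let me restate the construction cleanly. Since $L$ is backward connected with $n \geq m$, the lift of $N$ to $L$ factors through the network $P$ defined by merging, for each $0 \leq j \leq n-m-1$, the layer $C'_j$ with $C'_{n-m}$ according to $\bowtie$ — equivalently, $P = L/\bowtie''$ where $\bowtie''$ is the coloring collapsing $C'_0 \cup \dots \cup C'_{n-m}$ exactly as $\bowtie$ does there and leaving the rest singular. One must verify: (i) $\bowtie''$ is balanced — this uses that each $\sigma'_i$ maps $C'_0 \cup \dots \cup C'_{n-m}$ into itself, which holds because the only edges out of these layers into $C'_{n-m+1}$... hmm, actually $\sigma'_i(C'_{n-m+1}) \subseteq C'_{n-m}$, so $C'_{n-m}$ IS in the image. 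The relevant fact is $\sigma'_i(C'_{n-m}) \subseteq C'_{n-m-1} \cup \dots \subseteq S$ where $S = C'_0 \cup \dots \cup C'_{n-m}$, so $\sigma'_i(S) \subseteq S$; then Lemma~\ref{splitstep} applies verbatim with this $S$, producing $\bowtie''$ with the stated properties, and $P := L/\bowtie''$. (ii) $P$ is a feed-forward network with $m$ layers, obtained from $L$ by a lift creating $n-m$ new layers — here is where Lemma~\ref{layliftfeed} does its work: since each $[C'_j]_{\bowtie} = C_0$ for $j \leq n-m$, collapsing them via $\bowtie''$ produces a single layer of size $|C_0|$, and the remaining layers $C'_{n-m+1},\dots,C'_n$ are unchanged, so $P$ has layer sizes $|C_0|, |C'_{n-m+1}|, \dots, |C'_n|$; by Lemma~\ref{layliftfeed} these equal $|C_0|,|C_1|,\dots,|C_m|$. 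Thus $L$ is a lift of $P$ creating $n - m$ new layers. (iii) $P$ is a lift of $N$ with the same number of layers, so by Example~\ref{exe:compliftinslay}, since $P$ is backward connected (being a quotient of the backward connected $L$, by the Remark after the quotient definition), the lift of $N$ to $P$ is a composition of lifts inside the layers. Chaining, the lift of $N$ to $L$ is a composition of lifts inside the layers followed by a lift creating $n-m$ new layers.

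The main obstacle I anticipate is item (ii): confirming that collapsing the first $n-m+1$ layers via $\bowtie''$ genuinely yields a feed-forward network in which $C'_{n-m+1}, \dots, C'_n$ retain their sizes and are correctly identified with $C_1,\dots,C_m$ — i.e. that no "accidental" merging propagates upward into layer $C'_{n-m+1}$. This requires knowing that $\bowtie''$-classes of cells in $C'_{n-m+1}$ are singular, which is exactly the $[C\setminus S]_{\bowtie''} = C \setminus S$ conclusion of Lemma~\ref{splitstep}, combined with Lemma~\ref{layliftfeed} to match sizes; and it requires that the collapsed block really is a single layer of size $|C_0|$, which follows from $[C'_{n-m}]_{\bowtie} = \dots = [C'_0]_{\bowtie} = C_0$. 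I would also need a small verification that the order of composition is as claimed (first inside-layer lifts on $N$ to reach $P$, then the new-layer lift on $P$ to reach $L$), which is immediate once the two intermediate factorizations are in place. Everything else is bookkeeping with the refinement relation $\preceq$ and the transitivity of lifts already recorded before the statement.
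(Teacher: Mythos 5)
Your overall strategy --- combining Lemma~\ref{splitstep} and Example~\ref{exe:compliftinslay} with the layer identification of Lemma~\ref{layliftfeed} --- is in the right spirit, and the case $n=m$ is handled correctly. But there is a genuine gap in the case $n>m$, concentrated in your item (ii). You assert that the layer sizes $|C'_{n-m+1}|,\dots,|C'_n|$ equal $|C_1|,\dots,|C_m|$ ``by Lemma~\ref{layliftfeed}''; the lemma only gives $[C'_{n-m+j}]_{\bowtie}=C_j$, i.e.\ $|[C'_{n-m+j}]_{\bowtie}|=|C_j|$, and $|C'_{n-m+j}|$ can be strictly larger whenever $\bowtie$ merges cells inside that layer (this is exactly the situation of Figures~\ref{fig:liftinlayerbackconenecright} and~\ref{fig:liftinlayerbackconenecleft}). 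More fatally, the conclusion ``$L$ is a lift of $P$ creating $n-m$ new layers'' requires, by the definition, $|C'_0|=\dots=|C'_{n-m}|=|P_0|=|C_0|$, whereas Lemma~\ref{layliftfeed} only gives $[C'_j]_{\bowtie}=C_0$, so $|C'_j|\geq|C_0|$ with possibly strict inequality. A minimal counterexample: let $N$ be a single cell with two self-loops and $L$ a backward connected lift with layers of sizes $1,2,1$; your $S$ is the whole cell set, your $P$ equals $N$, yet $L$ is not a lift of $N$ that creates new layers, since the middle layer has two cells.

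The underlying problem is the order of your factorization: you do the inside-layer lifts first (from $N$ up to a network $P$ with $m+1$ layers) and the layer-creating lift last (from $P$ up to $L$), and that last step cannot absorb splittings occurring inside the newly created layers or inside the upper layers of $L$. The paper factors in the opposite order. It defines $\bowtie_1$ on $L$ by $c\bowtie_1 d$ if and only if $c\bowtie d$ \emph{and} $c,d$ lie in the same layer of $L$; this coloring is balanced, satisfies $\bowtie_1\preceq\bowtie$, and $Q_1=L/\bowtie_1$ is a feed-forward network with the same $n+1$ layers as $L$ whose layer sizes are, by Lemma~\ref{layliftfeed}, $|C_0|,\dots,|C_0|,|C_1|,\dots,|C_m|$. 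Hence $N\to Q_1$ is a lift that creates $n-m$ new layers, and $Q_1\to L$, being a lift between networks with the same number of layers with $L$ backward connected, decomposes into lifts inside the layers by Example~\ref{exe:compliftinslay}. If you reverse your order and replace your $P$ by this $Q_1$, the rest of your bookkeeping goes through.
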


\begin{proof}
Let $N$ be a feed-forward network with layers $C_0,C_1,\dots,C_m$ and $L$ a feed-forward lift of $N$ with layers $C'_0,C'_1,\dots,C'_n$ such that $L$ is a backward connected and represented by $(\sigma_i)_{i=1}^k$.  
Denote by $\bowtie$ the balanced coloring in $L$ such that $N$ is the quotient network of $L$ associated to $\bowtie$.


 Define the coloring $\bowtie_1$ in $L$ such that $c\bowtie_1 d$ if $c\bowtie d$ and $c,d \in C'_j$ for $0\leq j\leq n$.
Let $c\bowtie_1 d$. 
Since $\bowtie$ is balanced, we have that $\sigma_i(c)\bowtie \sigma_i(d)$ and $\sigma_i(c),\sigma_i(d) \in C'_{j'}$, where $j'=\max\{0,j-1\}$ and $0\leq i\leq k$. Then $\sigma_i(c)\bowtie_1\sigma_i(d)$ for $0\leq i\leq k$. Hence $\bowtie_1$ is balanced. 

Define the network $Q_1=L/\bowtie_1$ and $A_j=[C'_j]_{\bowtie_1}$, for $0\leq j \leq n$.
Let $c\in A_0$. There exists $d\in C'_0$ such that $c=[d]_{\bowtie_1}$. 
Then $\sigma^{\bowtie_1}_i(c)=\sigma^{\bowtie_1}_i([d]_{\bowtie_1})= [\sigma_i(d)]_{\bowtie_1}=[d]_{\bowtie_1}=c$, for $1\leq i\leq k$. 
Let $c\in A_j$ and $1\leq j\leq m$. 
There exists $d\in C'_j$ such that $c=[d]_{\bowtie_1}$. 
Since $\sigma_i(d)\in C'_{j-1}$, $\sigma^{\bowtie_1}_i(c)= [\sigma_i(d)]_{\bowtie_1}\in A_{j-1}$, for $1\leq i\leq k$.
If $j<m$, then there exist $d'\in C'_{j+1}$ and $1 \leq i\leq k$ such that $d=\sigma_i(d')$ and $c= \sigma^{\bowtie_1}_i([d']_{\bowtie_1})$.
Therefore $Q_1$ is a feed-forward network with layers $A_0,A_1,\dots,A_n$.

Note that $\bowtie_1\preceq \bowtie$. Hence $Q_1$ is a lift of $N$, if $\bowtie_1\prec \bowtie$, and $Q_1=N$, if $\bowtie_1=\bowtie$. It follows from Lemma~\ref{layliftfeed} that 
$$|A_{n-m+j}|=|[C'_{n-m+j}]_{\bowtie_1}|=|[C'_{n-m+j}]_{\bowtie}|=|C_{j}|,\quad \quad 1 \leq j \leq m,$$
 and 
$$ |A_{j'}|=|[C'_{j'}]_{\bowtie_1}|=|[C'_{j'}]_{\bowtie}|=|C_{0}|,\quad \quad  0 \leq j' \leq n-m.$$
Hence $Q_1$ is a lift of $N$ that creates $n-m$ new layers or $Q_1=N$.

The networks $Q_1$ and its lift $L$ have the same number of layers. 
Following Example~\ref{exe:compliftinslay}, we see that the lift of $Q_1$ to $L$ is the composition of lifts inside the layers. 
Therefore the lift of $N$ to $L$ can be obtained by the composition of a lift that creates new layer with lifts inside a layer. 
\end{proof}

The condition imposed in Proposition~\ref{prop:compolift} that the lift network is backward connected can not be removed as the next example shows.

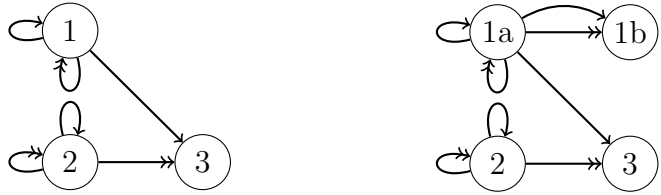
\begin{figure}[h]
\center
\begin{subfigure}{0.4\textwidth}
\center
\begin{tikzpicture}
\node (n1) [circle,draw]   {1};
\node (n2) [circle,draw]  [below=of n1] {2};
\node (n3) [circle,draw] [right=of n2]  {3};

\draw[->, thick] (n1) to  [loop left] (n1);
\draw[->, thick] (n2) to  [loop above] (n2);
\draw[->, thick] (n1) to   (n3);

\draw[->>, thick] (n1) to  [loop below] (n1);
\draw[->>, thick] (n2) to  [loop left] (n2);
\draw[->>, thick] (n2) to   (n3);
\end{tikzpicture}
\end{subfigure}
\begin{subfigure}{0.4\textwidth}
\centering
\begin{tikzpicture}
\node (n1) [circle,draw, label=center:1a]   {\phantom{0}};
\node (n4) [circle,draw, label=center:1b] [right=of n1]   {\phantom{0}};
\node (n2) [circle,draw]  [below=of n1] {2};
\node (n3) [circle,draw] [right=of n2]  {3};

\draw[->, thick] (n1) to  [loop left] (n1);
\draw[->, thick] (n1) to  [bend left] (n4);
\draw[->, thick] (n2) to  [loop above] (n2);
\draw[->, thick] (n1) to   (n3);

\draw[->>, thick] (n1) to  [loop below] (n1);
\draw[->>, thick] (n1) to   (n4);
\draw[->>, thick] (n2) to  [loop left] (n2);
\draw[->>, thick] (n2) to   (n3);
\end{tikzpicture}
\end{subfigure}
\caption{The feed-forward network on the right is a lift of the feed-forward network on the left and it is not the composition of lifts that create new layers and lifts inside layers. Note that the lift network is not backward connected.}
\label{fig:feedforwliftnotdivi2}
\end{figure}

\begin{exe}\label{exe:feedforwliftnotdivi2}
Let $N$ be the feed-forward network on the left of Figure~\ref{fig:feedforwliftnotdivi2} and $L$ the feed-forward network on the right of Figure~\ref{fig:feedforwliftnotdivi2}. The network $L$ is a lift of $N$, considering the coloring in $L$ given by the class $\{1a,1b\}$. This lift cannot be obtained by a composition of lifts that create new layers and lifts inside the layers. Note that $N$ and $L$ have the same number of layers and the coloring in $L$ given by the class $\{1b,3\}$ is not balanced. However, $L$ is not backward connected.
\end{exe}

The lifts inside a layer can be further decomposed using splits.

\begin{defi}
Let $N$ be a network  and $L$ a lift of $N$.
We say that $L$ is the \emph{split} of a cell $c$ in $N$ into cells $c_1, c_2,\dots,c_l$ in $L$, if the coloring $\bowtie$ in $L$ given by $c_i\bowtie c_j$, for $1\leq i,j\leq l$, is balanced, $L/\bowtie=N$ and $[c_i]_{\bowtie}=c$. 
\end{defi}

The network in Figure~\ref{fig:liftinlayerbackconenecright} is a split of the cell $3$ in Figure~\ref{fig:liftinlayerbackconenecleft} into the cells $3$ and $4$.
The network in the right of Figure~\ref{fig:feedforwliftnotdivi2} is a split of the cell $1$ in the left of Figure~\ref{fig:feedforwliftnotdivi2} into the cells $1a$ and $1b$. 

\begin{obs}\label{obs:liftinslaycompsplit}
By Lemma~\ref{splitstep}, a lift inside a layer is the composition of splits of a cell into two cells. 
\end{obs}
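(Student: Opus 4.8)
The plan is to realise the decomposition directly, by factoring the balanced coloring that defines the lift through a descending chain of balanced colorings, each obtained from the previous one by peeling a single cell off a color class --- this is exactly the kind of sub-coloring construction used in the proof of Lemma~\ref{splitstep}.

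Say $L$ is a lift of $N$ inside the layer $C_j$, with layers $C'_0,\dots,C'_m$ and $C_0,\dots,C_m$ respectively, and let $\bowtie$ be the balanced coloring of $L$ with $L/\bowtie=N$. The first step is to show that every non-singleton $\bowtie$-class lies inside $C'_j$. I would do this by noting that the layer of a cell in a feed-forward network is intrinsic: the cells of $C'_0$ are exactly those fixed by all the $\sigma_i$, and, inductively, a cell lies in $C'_i$ exactly when its inputs lie in $C'_{i-1}$. Pushing this down to $N$ gives $[C'_i]_\bowtie\subseteq C_i$ for all $i$, and since $\bigcup_i[C'_i]_\bowtie$ exhausts the cells of $N$ while the $C_i$ are disjoint, in fact $[C'_i]_\bowtie=C_i$ for every $i$. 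As $|C'_i|=|C_i|$ for $i\neq j$, the map $c\mapsto[c]_\bowtie$ is injective on each such $C'_i$, so $\bowtie$ only identifies cells inside $C'_j$.

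Next I would build the chain. Set $\delta=|C'_j|-|C_j|=|L|-|N|\geq 1$ and define balanced colorings $\bowtie=\bowtie^{(0)},\bowtie^{(1)},\dots,\bowtie^{(\delta)}$, where $\bowtie^{(t)}$ strictly refines $\bowtie^{(t-1)}$ by removing one element from a non-singleton class of $\bowtie^{(t-1)}$ and turning it into a singleton (each step adds one color class inside $C'_j$, so after $\delta$ steps $\bowtie^{(\delta)}$ is the trivial coloring). Each $\bowtie^{(t)}$ refines $\bowtie$, so whenever $c\bowtie^{(t)}d$ with $c\neq d$ we have $c,d\in C'_j$ and $\sigma_i(c)\bowtie\sigma_i(d)$ with $\sigma_i(c),\sigma_i(d)\in C'_{j-1}$; since $\bowtie$ is trivial on $C'_{j-1}$ this forces $\sigma_i(c)=\sigma_i(d)$, hence $\sigma_i(c)\bowtie^{(t)}\sigma_i(d)$ and $\bowtie^{(t)}$ is balanced. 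Put $L_t=L/\bowtie^{(t)}$; then $L_0=N$, $L_\delta=L$, each $L_t$ is again a feed-forward network, and $L_t$ is a lift of $L_{t-1}$ via the coloring on $L_t$ induced by $\bowtie^{(t-1)}$. By construction that induced coloring identifies exactly two cells of $L_t$ (both in the $j$th layer) and leaves all others singletons, and the same computation shows it is balanced; hence $L_t$ is the split of a single cell of $L_{t-1}$ into two cells. Composing these $\delta$ splits exhibits the lift of $N$ to $L$ as a composition of splits of a cell into two cells.

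The step I expect to be the real obstacle is the first one --- showing that the coloring realising the lift is confined to the single layer $C'_j$ --- since the balancedness of every coloring in the chain and the ``exactly two cells'' feature of each split both rest on this confinement, while the remainder is bookkeeping. (In contrast with general feed-forward lifts, where the realising coloring may mix layers, compare the passage to $\bowtie_1$ in the proof of Proposition~\ref{prop:compolift}; here the cardinality constraints built into the definition of a lift inside a layer are precisely what forbid such mixing.)
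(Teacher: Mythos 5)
The paper offers no argument for this remark beyond the citation of Lemma~\ref{splitstep}, so your proposal has to stand on its own, and it has a genuine gap at exactly the step you single out as the crux. Your confinement claim rests on the inclusion $[C'_i]_{\bowtie}\subseteq C_i$ for all $i$, and that inclusion is false under the definition of a lift inside a layer as literally stated (which imposes only the cardinality conditions). The intrinsic characterisation of layers breaks at the bottom: a cell of $C'_1$ has all its inputs in $C'_0$, but so does a cell of $C'_0$, so knowing that the inputs of $[c]_{\bowtie}$ lie in $C_0$ does not place $[c]_{\bowtie}$ in $C_1$ --- the class may be a fixed cell of $C_0$. This actually happens in the paper's own Example~\ref{exe:feedforwliftnotdivi2} (Figure~\ref{fig:feedforwliftnotdivi2}): there $m=n=1$, $|C'_0|=|C_0|$ and $|C'_1|\neq|C_1|$, so $L$ meets the stated definition of a lift inside the layer $C_1$, yet the realising coloring identifies $1a\in C'_0$ with $1b\in C'_1$, and $[C'_1]_{\bowtie}=\{1,3\}\not\subseteq C_1=\{3\}$. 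So the cardinality constraints do \emph{not} forbid mixing of layers, contrary to your parenthetical claim. (The paper's discussion of that example, which declares it not to be a composition of lifts inside layers, shows the author tacitly builds confinement into what ``lift inside a layer'' means; under that reading your first step is part of the definition rather than a consequence of it, but as a derivation it does not work.)

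Granting confinement, the remainder of your argument is correct and more informative than the paper's bare citation: once every nontrivial class lies in $C'_j$ and $\bowtie$ is trivial on $C'_{j-1}$, identified cells have literally equal inputs, so each coloring obtained by peeling one cell off a class is automatically balanced, and the chain $\bowtie^{(\delta)}\preceq\cdots\preceq\bowtie^{(0)}=\bowtie$ realises the lift as $\delta$ successive splits of one cell into two. Note, though, that this peeling is not literally an application of Lemma~\ref{splitstep}: that lemma restricts a coloring to an input-closed subset $S$, and for $j<m$ no such $S$ can exclude a single cell of $C'_j$, since that cell feeds the next layer. To close the gap you should either adopt confinement as part of the definition of a lift inside a layer, or replace it by the weaker property your argument actually uses --- that any two $\bowtie$-identified cells have equal inputs --- and prove that instead; the latter property does hold in Figure~\ref{fig:feedforwliftnotdivi2}, which is why that lift is nonetheless a single split.
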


Next, we prove that if a feed-forward network is backward connected and a lift of some feed-forward network, then there is an unique balanced coloring associated to the lift.

\begin{lem}\label{lem:unibalcolbackcon}
Let $N$ be a feed-forward network and $L$ a lift of $N$ such that $L$ is a backward connected feed-forward network. Let $\bowtie_1,\bowtie_2$ be colorings in $L$. If $L/\bowtie_1=L/\bowtie_2=N$, then $\bowtie_1=\bowtie_2$.
\end{lem}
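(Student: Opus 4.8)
The plan is to reduce the statement to the equality of two maps onto $N$. Writing $\sigma'_i$ for the functions representing $L$ and $\sigma_i$ for those representing $N$, the hypothesis that $L/\bowtie_l$ is (isomorphic to) $N$, for $l=1,2$, lets me fix for each $l$ a surjection $\pi_l$ from the cell set of $L$ onto that of $N$ with $\pi_l\circ\sigma'_i=\sigma_i\circ\pi_l$ for every edge type $i$, and such that $c\bowtie_l c'$ holds exactly when $\pi_l(c)=\pi_l(c')$; this is just the unwinding of ``$N$ is a quotient of $L$ via $\bowtie_l$''. Since $\bowtie_l$ is the partition into fibres of $\pi_l$, it suffices to prove $\pi_1=\pi_2$.

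First I would dispose of the trivial case $m=0$, where $m$ denotes the number of layers of $N$: then $N$ is a single cell, each $\bowtie_l$ has a single class, and $\bowtie_1=\bowtie_2$. Assume $m\ge1$ and let $C'_0,\dots,C'_n$ be the layers of $L$. Because $L$ is a backward connected feed-forward network, $C'_n=\{c_1\}$ for a single cell $c_1$ and $L$ is in fact backward connected for $c_1$ (the distinguished cell must lie in the last layer, which is a singleton). Since a quotient of a backward connected network is backward connected, $N$ is backward connected too, so its last layer $C_m$ is a singleton $\{*\}$. Then I would check that $\pi_l(c_1)=*$: by Lemma~\ref{layliftfeed} the set of $\bowtie_l$-classes of the cells of $C'_n$ is exactly the last layer $C_m$ of $N$, i.e.\ $[\,C'_n\,]_{\bowtie_l}=C_m$, and as both sides are singletons this says precisely $\pi_l(c_1)=*$.

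The heart of the argument is that every cell of $L$ is reached from $c_1$ by applying input functions. Given a cell $c\in C'_{n-j}$, backward connectedness for $c_1$ provides a directed path from $c$ to $c_1$; deleting its self-loop edges leaves a directed path from $c$ to $c_1$ which, since every non-self-loop edge of a feed-forward network goes from one layer to the next, has exactly $j$ edges, of types $i_1,\dots,i_j$ read off from $c$ towards $c_1$. Unwinding these edges backwards from $c_1$ gives $c=\sigma'_{i_1}\!\bigl(\sigma'_{i_2}(\cdots\sigma'_{i_j}(c_1)\cdots)\bigr)$, and applying the intertwining relation $\pi_l\circ\sigma'_i=\sigma_i\circ\pi_l$ repeatedly, together with $\pi_l(c_1)=*$, yields $\pi_l(c)=\sigma_{i_1}\!\bigl(\sigma_{i_2}(\cdots\sigma_{i_j}(*)\cdots)\bigr)$ (and for $j=0$ this is just $\pi_l(c_1)=*$). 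The right-hand side does not depend on $l$, so $\pi_1(c)=\pi_2(c)$ for every cell $c$; hence $\pi_1=\pi_2$, and therefore $c\bowtie_1 c'\Leftrightarrow\pi_1(c)=\pi_1(c')\Leftrightarrow\pi_2(c)=\pi_2(c')\Leftrightarrow c\bowtie_2 c'$, i.e.\ $\bowtie_1=\bowtie_2$.

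The main obstacle — the only point where I expect to have to be careful — is pinning down $\pi_l(c_1)=*$, i.e.\ making sure that the chosen identification $L/\bowtie_l\cong N$ matches last layer to last layer. I would extract this from Lemma~\ref{layliftfeed} as above; alternatively it follows from the fact that a feed-forward network satisfying the standing assumptions has a uniquely determined layer partition (layer $0$ is the set of cells fixed by every $\sigma_i$, and each further layer is the set of still-unassigned cells all of whose inputs lie in earlier layers), so that every isomorphism of feed-forward networks respects layers. Everything else is a mechanical application of the relation $\pi_l\circ\sigma'_i=\sigma_i\circ\pi_l$.
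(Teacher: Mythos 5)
Your proof is correct and follows essentially the same route as the paper's: both use backward connectedness to reduce the last layers to singletons, establish agreement of the two quotient maps there, and then propagate that agreement backwards through the layers via the intertwining relation $[\sigma'_i(c)]_{\bowtie_l}=\sigma_i([c]_{\bowtie_l})$. The only cosmetic difference is that you unwind explicit directed paths to the last cell, where the paper runs a downward induction on layers (using the standing assumption that every non-last-layer cell feeds an edge into the next layer); your explicit justification, via Lemma~\ref{layliftfeed}, that the identification $L/\bowtie_l= N$ matches last layer to last layer is if anything more careful than the paper's.
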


\begin{proof}
Let $N$ be a feed-forward network and $L$ a lift of $N$ such that $L$ is a backward connected feed-forward network. Let $\bowtie_1,\bowtie_2$ be colorings in $L$. Denote by $C_0,\dots,C_m$ the layers of $N$, by $(\sigma_i^N)_{i=1}^k$ the representative functions of $N$, by $C'_0,C'_1,\dots,C'_n$ the layers of $L$ and by $(\sigma_i^L)_{i=1}^k$ the representative functions of $L$. Suppose that $L/\bowtie_1=L/\bowtie_2=N$.

Since $L$ is backward connected, we know that $N$ is backward connected and $|C_m|=|C'_n|=1$. So for $c\in C_{n}'$, we have that $[c]_{\bowtie_1}=[c]_{\bowtie_2}$. Next, we prove that if for every $c\in C_j'$, $j>0$, $[c]_{\bowtie_1}=[c]_{\bowtie_2}$, then for every $d\in C'_{j-1}$ we have that $[d]_{\bowtie_1}=[d]_{\bowtie_2}$. Suppose for every $c\in C_j'$, $j>0$, that $[c]_{\bowtie_1}=[c]_{\bowtie_2}$. Let $d\in C_{j-1}'$. Then there exist $1\leq i\leq k$ and $c\in C_j'$ such that $\sigma_i^L(c)=d$. Thus 
$$[d]_{\bowtie_1}=[\sigma_i^L(c)]_{\bowtie_1}=\sigma_i^N([c]_{\bowtie_1})=\sigma_i^N([c]_{\bowtie_2})=[\sigma_i^L(c)]_{\bowtie_2}=[d]_{\bowtie_2}.$$
By induction, for every $0\leq j\leq n$ and $c\in C_j'$ we have that $[c]_{\bowtie_1}=[c]_{\bowtie_2}$.
Hence $\bowtie_1=\bowtie_2$. 
\end{proof}

In the next example, we see that the previous result does not hold if the lift is not backward connected.

\begin{figure}[h]
\center
\begin{subfigure}[t]{0.45\textwidth}
\center
\begin{tikzpicture}
\node (n1) [circle,draw]   {1};
\node (n2) [circle,draw]  [below=of n1] {2};
\node (n4) [circle,draw] [right=of n1]  {4};
\node (n5) [circle,draw] [below=of n4]  {5};
\node (n6) [circle,draw]  [below=of n5] {6};

\draw[->, thick] (n1) to  [loop below] (n1);
\draw[->, thick] (n2) to  [loop below]  (n2);
\draw[->, thick] (n2) to   (n4);
\draw[->, thick] (n2) to  [bend left] (n5);
\draw[->, thick] (n1) to   (n6);

\draw[->>, thick] (n1) to  [loop above] (n1);
\draw[->>, thick] (n2) to  [loop above] (n2);
\draw[->>, thick] (n1) to   (n4);
\draw[->>, thick] (n2) to  [bend right] (n5);
\draw[->>, thick] (n2) to   (n6);
\end{tikzpicture}
\end{subfigure}
\begin{subfigure}[t]{0.45\textwidth}
\centering
\begin{tikzpicture}
\node (n1) [circle,draw]   {1};
\node (n2) [circle,draw]  [below=of n1] {2};
\node (n3) [circle,draw]  [below=of n2] {3};
\node (n4) [circle,draw] [right=of n1]  {4};
\node (n5) [circle,draw] [below=of n4]  {5};
\node (n6) [circle,draw]  [below=of n5] {6};

\draw[->, thick] (n1) to  [loop below] (n1);
\draw[->, thick] (n2) to  [loop below]  (n2);
\draw[->, thick] (n3) to  [loop below]  (n3);
\draw[->, thick] (n2) to   (n4);
\draw[->, thick] (n3) to   (n5);
\draw[->, thick] (n1) to   (n6);

\draw[->>, thick] (n1) to  [loop above] (n1);
\draw[->>, thick] (n2) to  [loop above] (n2);
\draw[->>, thick] (n3) to  [loop above] (n3);
\draw[->>, thick] (n1) to   (n4);
\draw[->>, thick] (n2) to   (n5);
\draw[->>, thick] (n3) to   (n6);
\end{tikzpicture}
\end{subfigure}
\caption{The network on the right, $L$, is a lift of the network on the left, $N$. There are $3$ different balanced colorings in $L$ such that $N$ is a quotient network of $L$ associated to each of those colorings.}
\label{fig:uniqbalcolback}
\end{figure}
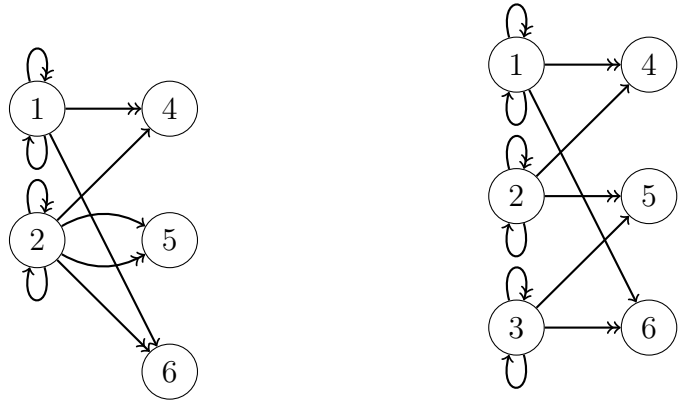

\begin{exe}\label{exe:uniqbalcolback}
Let $N$ be the feed-forward network on the left of Figure~\ref{fig:uniqbalcolback} and $L$ the feed-forward network on the right of Figure~\ref{fig:uniqbalcolback}. Consider the balanced colorings in $L$: $\bowtie_1$ given by $1\bowtie_1 2$; $\bowtie_2$ given by $2\bowtie_2 3$; and $\bowtie_3$ given by $1\bowtie_3 3$. Then $N=L/\bowtie_1=L/\bowtie_2=L/\bowtie_3$ and $L$ is a lift inside a layer. 
\end{exe}

\section{Feed-forward systems}\label{sec:ffs}

In this section, we recall the concept of coupled cell systems associated to a network, synchrony subspace and steady-state bifurcations, following \cite{SGP03,GST05}. 


Let $N$ be a network represented by the functions $(\sigma_i)_{i=1}^k$. 
For each cell $c$ of the network, we associate a coordinate $x_c\in \mathbb{R}$. 
We say that $F:\mathbb{R}^{|N|} \rightarrow \mathbb{R}^{|N|}$ is an \emph{admissible vector field} for $N$, if there is $f:\mathbb{R}\times \mathbb{R}^{k}\rightarrow \mathbb{R}$ such that
$$(F(x))_c=f(x_c, x_{\sigma_1(c)},\dots,x_{\sigma_k(c)}),$$
for every cell $c$ of $N$. 
The admissible vector fields for $N$ are defined by the functions $f:\mathbb{R}\times \mathbb{R}^{k}\rightarrow \mathbb{R}$. 
We denote by $f^N$ the admissible vector field for $N$ defined by $f$.

A \emph{coupled cell system} associated to a network $N$ is a system of ordinary differential equations 
$$\dot{x}=f^N(x),\quad x\in \mathbb{R}^{|N|},$$
where $f^N : \mathbb{R}^{|N|}\rightarrow \mathbb{R}^{|N|}$ is an admissible vector field for $N$.
When $N$ is a feed-forward network, we refer to a coupled cell system associated to $N$ as a \emph{feed-forward system}.

Let $(A_i)_{i=1}^{k}$ be the adjacency matrices of $N$. If the function $f$ is differentiable at the origin, then the Jacobian matrix of $f^N$ at the origin is
$$J_f^{N}:= (Df^{N})_{0}=f_0Id + \sum_{i=1}^{k} f_i A_i,$$
where $Id$ is the identity $|N|\times|N|$-matrix and 
$$f_i:=\frac{\partial f}{\partial x_{i}}(0,0\dots,0),\quad \quad 0\leq i\leq k.$$

Let $C_0,C_1,\dots,C_m$ be the layer of $N$.
For every feed-forward system $f^N$ associated to $N$, the Jacobian matrix at the origin has the form
$$J^N_f=\begin{bmatrix}
(\sum_{i=0}^k f_j) Id_0&0&0&\dots&0&0\\
R_1&f_0Id_1&0&\dots&0&0\\
0&R_2&f_0Id_2&\dots&0&0\\
\vdots&\vdots&\vdots&\ddots&\vdots&\vdots\\
0&0&0&\dots&f_0Id_{m-1}&0\\
0&0&0&\dots&R_m&f_0Id_m
\end{bmatrix},$$
where $Id_i$ is the identity matrix of size $|C_i|$, $i=0,1,\dots,m$, $R_j$ is a $|C_{j}|\times|C_{j-1}|$-matrix, $j=1,\dots,m$. 
The eigenvalues of $J_f^N$ are $\sum_{i=0}^k f_j$ and $f_0$.


A \emph{polydiagonal subspace} is a subspace of $\mathbb{R}^{|N|}$ given by the equalities of some cell coordinates. Given a coloring $\bowtie$ on the set of cell of $N$, the polydiagonal subspace associated to $\bowtie$ is 
$$\Delta_{\bowtie}:=\left\{x:  c\bowtie d \Rightarrow x_c=x_d \right\}\subseteq \mathbb{R}^{|N|}.$$
And any polydiagonal subspace of $\mathbb{R}^{|N|}$ defines an unique coloring on the set of cell of $N$.

Given a function $G:\mathbb{R}^{|N|}\rightarrow \mathbb{R}^{|N|}$ and a subset $\Delta \subseteq \mathbb{R}^{|N|}$, we say that $\Delta$ is \emph{invariant} by $G$ if $G(\Delta)\subseteq \Delta$.
A \emph{synchrony subspace} of a network $N$ is a polydiagonal subspace of $\mathbb{R}^{|N|}$ that is invariant by any admissible vector field of $N$. 
There is an one to one correspondence between balanced colorings $\bowtie$ and synchrony subspaces $\Delta_{\bowtie}$. See \cite[Theorem 4.3]{GST05}. More specifically, the polydiagonal $\Delta_{\bowtie}$ associated to a coloring $\bowtie$ is a synchrony subspace of $N$ if and only if the coloring $\bowtie$ is balanced.

Since a synchrony subspace $\Delta_{\bowtie}$ is invariant by every admissible vector field $f^N$ of $N$, every coupled cell system of $N$ given by $f^N$ can be restricted to $\Delta_{\bowtie}$. Each restricted system is a coupled cell system of $N/\bowtie$ given by $f^{N/\bowtie}$. Moreover, given a solution $y(t)\in\mathbb{R}^{|N/\bowtie|}$ of the coupled cell system of $N/\bowtie$ given by $f^{N/\bowtie}$, we have that $x(t)=(x_c(t))$, where $x_c(t)=y_{[c]_{\bowtie}}(t)$ is a solution of the coupled cell system of $N$ given by $f^{N}$. See \cite[Theorem 5.2]{GST05}.


Let $G:\mathbb{R}^{d}\times\mathbb{R}\rightarrow \mathbb{R}^{d}$ be a family of smooth vector fields, $d>0$ and the corresponding dynamical systems, depending on the parameter $\lambda$,
\begin{equation}\label{eq:gends}
\dot{x}=G(x, \lambda).
\end{equation}
Consider an equilibrium $(x^*,\lambda^*)$ of $(\ref{eq:gends})$, i.e.  $G(x^*,\lambda^*)=0$.
The family of dynamical systems $(\ref{eq:gends})$ suffers a \emph{local bifurcation} at $(x^*,\lambda^*)$ if for every neighborhoods $U_x$ and $U_\lambda$ of $x^*$ and $\lambda^*$, respectively, there exists $\lambda_1,\lambda_2\in U_{\lambda}$ such that the family $(\ref{eq:gends})$ at $\lambda_1$ and $\lambda_2$ have different topological structures (different stability/number of equilibrium points or periodic orbits, etc.).
A necessary condition for a local bifurcation is that the Jacobian of $G$ at $(x^*,\lambda^*)$, $DG_{(x^*,\lambda^*)}$, has an eigenvalue with zero real part.
We focus on steady-state bifurcations and we say that a \emph{steady-state bifurcation} at $(x^*,\lambda^*)$ occurs if the number of equilibrium points in a neighborhood of $x^*$ changes when the parameter $\lambda$ crosses $\lambda^*$. 
A necessary condition for the occurrence of a steady-state bifurcation at $(x^*,\lambda^*)$ is that $0$ is an eigenvalue of $DG_{(x^*,\lambda^*)}$.

In order to study the steady-state bifurcations of a family coupled cell systems associated to $N$ from a fully synchronous equilibrium at $\lambda=0$, we consider a family of smooth functions $f:\mathbb{R}\times\mathbb{R}^{k}\times\mathbb{R}\rightarrow \mathbb{R}$ such that $$f(0,0,\dots,0,\lambda)=0,$$
for every $\lambda\in\mathbb{R}$. We denote by $\mathcal{V}(N)$ the set of those functions.
The set of functions $f\in \mathcal{V}(N)$ such that a steady-state bifurcation occurs at $(0,0)$ for $f^N$ is given by the union of the following sets:
$$\mathcal{V}_k(N):=\{f\in \mathcal{V}(N): \sum_{i=0}^k f_j=0\},\quad \quad\mathcal{V}_0(N):=\{f\in \mathcal{V}(N): f_0=0\}.$$

Thus $\mathcal{V}_k(N)$ denotes the set of functions with a bifurcation condition associated with the \emph{valency} of $N$ and $\mathcal{V}_0(N)$ the set of functions with a bifurcation condition associated with the \emph{internal dynamics} of the cells.

Next, we define equilibrium branches of a coupled cell system associated to $N$ given by $f\in \mathcal{V}(N)$. 
We say that $D\subseteq \mathbb{R}$ is a \emph{domain} if 
$D$ has one of the following forms: $]-\lambda_0,0]$; $]-\lambda_0,\lambda_0[$; or $[0,\lambda_0[$, for some $\lambda_0>0$.

Since we study local bifurcations, we use germs to define branches. 
Let $D_1,D_2$ be domains. We say that two smooth functions $b_1:D_1\rightarrow\mathbb{R}^{|N|}$ and $b_2:D_2\rightarrow\mathbb{R}^{|N|}$ are \emph{germ equivalents} if there exists an open neighborhood $U$ of $0$ such that $U \cap D_1 \cap D_2\neq \{0\}$ and $b_1(\lambda)=b_2(\lambda)$, for every $\lambda\in U \cap D_1 \cap D_2$. 
The previous relation is not transitive, so we consider its closer by transitivity.
 Given a smooth function $b$, we use the term \emph{germ} $b$ to refer to a representative element of the equivalence class of $b$ with respect to germ equivalence. 


Let $D$ be a domain. We say that a germ $b:D\rightarrow \mathbb{R}^{|N|}$ is an \emph{equilibrium branch of $f$ on $N$}, if $$f^N(b(\lambda),\lambda)=0,$$ for every $\lambda \in D$.
Since $f(0,0,\dots,0,\lambda)=0$ for every $\lambda$, we have that $x(\lambda)=(0,\dots,0)$ is an equilibrium branch of $f$ on $N$, called \emph{the trivial branch of $f$ on $N$}. 
The equilibrium branches of $f$ on $N$ different from trivial branch are called the \emph{bifurcation branches of $f$ on $N$}. We define the set of equilibrium branches of $f$ on $N$:
$$\mathcal{B}(N,f)=\{b:D\rightarrow \mathbb{R}^{|N|}: b \textrm{ is a equilibrium branch of $f$ on $N$}\}.$$

\section{Steady-state bifurcations for FFNs}


\subsection{Steady-state bifurcations for FFNs associated with the valency}\label{sec:bifffnval}

First, we study the bifurcation problem of $f^N$ when $f\in\mathcal{V}_k(N)$.

\begin{prop}\label{prop:bifbrasumnet}
Let $N$ be a feed-forward network with layers $C_0,C_1,\dots,C_m$. Let $f\in \mathcal{V}_k(N)$. 
Then, generically, there are $2^{|C_0|}$ equilibrium branches of $f$ on $N$.
Moreover every equilibrium branch is uniquely determined by its value at the cells of the first layer $C_0$. 
\end{prop}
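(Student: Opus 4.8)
The plan is to peel off the layers of $N$ one at a time. For a cell $c\in C_0$ one has $\sigma_i(c)=c$ for every $i$, so the $c$-component of the equation $f^N(b(\lambda),\lambda)=0$ is the scalar equation $g(x_c,\lambda)=0$, where $g(x,\lambda):=f(x,x,\dots,x,\lambda)$. Since $f\in\mathcal{V}(N)$ we have $g(0,\lambda)\equiv 0$, so by Hadamard's lemma $g(x,\lambda)=x\,\tilde g(x,\lambda)$ for a smooth $\tilde g$, and $f\in\mathcal{V}_k(N)$ forces $\tilde g(0,0)=\partial_x g(0,0)=\sum_{i=0}^k f_i=0$. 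Writing $f_{ij}:=\partial^2 f/\partial x_i\partial x_j(0,\dots,0)$ and $\psi(\lambda):=\sum_{i=0}^k\partial f/\partial x_i(0,\dots,0,\lambda)$, one computes $\partial_x\tilde g(0,0)=\tfrac12\sum_{i,j=0}^k f_{ij}$ and $\partial_\lambda\tilde g(0,0)=\psi'(0)$, and I would impose as genericity hypotheses that both are nonzero. The implicit function theorem applied to $\tilde g=0$ then shows that near the origin $\{\tilde g=0\}$ is the graph of a unique smooth function $x_c=\beta(\lambda)$ with $\beta(0)=0$, defined on a full neighbourhood of $0$ and (because $\psi\not\equiv 0$) not identically zero. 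Hence the scalar equation $g(x_c,\lambda)=0$ has exactly two solution germs, the trivial one $x_c\equiv 0$ and the transcritical branch $x_c=\beta(\lambda)$, and so an equilibrium branch of $f$ on $N$ admits exactly $2^{|C_0|}$ possible restrictions to the first layer.

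Next I would propagate through the remaining layers by induction on $j=1,\dots,m$. Suppose the components $x_d$, $d\in C_{j-1}$, of an equilibrium branch have been identified with uniquely determined smooth germs $b_d$ satisfying $b_d(0)=0$. For $c\in C_j$ every input $\sigma_1(c),\dots,\sigma_k(c)$ lies in $C_{j-1}$, so the $c$-component of $f^N(b(\lambda),\lambda)=0$ becomes $F_c(x_c,\lambda):=f(x_c,b_{\sigma_1(c)}(\lambda),\dots,b_{\sigma_k(c)}(\lambda),\lambda)=0$, with $F_c(0,0)=0$ and $\partial_{x_c}F_c(0,0)=f_0$. Since $f\in\mathcal{V}_k(N)$ gives $f_0=-\sum_{i=1}^k f_i$, the further generic hypothesis $f_0\neq 0$ is admissible, and the implicit function theorem yields a unique smooth germ $x_c=b_c(\lambda)$ with $b_c(0)=0$ solving $F_c=0$. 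Therefore the values on $C_j$ are determined by those on $C_{j-1}$; by induction every equilibrium branch of $f$ on $N$ is uniquely determined by its restriction to $C_0$, and conversely each of the $2^{|C_0|}$ prescriptions on $C_0$ extends (uniquely, on a common small domain) to an equilibrium branch. Together with the first paragraph this gives exactly $2^{|C_0|}$ branches.

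Finally I would record that ``generically'' here means on the open dense subset of $\mathcal{V}_k(N)$ cut out by $\sum_{i,j=0}^k f_{ij}\neq 0$, $\psi'(0)\neq 0$ and $f_0\neq 0$; each is the complement of the zero locus of a nonzero polynomial in finitely many derivatives of $f$ at the origin, and the three conditions are jointly compatible with the linear constraint $\sum_{i=0}^k f_i=0$ defining $\mathcal{V}_k(N)$ (using $k\geq 1$, which holds since every cell is targeted by an edge of each of the $k\geq 1$ types). The step I expect to need the most care is the first-layer analysis: one must verify that under these hypotheses $g(x_c,\lambda)=0$ has no solution germ other than $x_c\equiv 0$ and $x_c=\beta(\lambda)$ — the reduction to the transcritical normal form — which follows because near the origin $\{g=0\}$ is exactly the union of the two transverse smooth curves $\{x_c=0\}$ and $\{x_c=\beta(\lambda)\}$, so any smooth germ through the origin contained in it must agree with one of them on a neighbourhood. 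The inductive propagation is then routine bookkeeping with the implicit function theorem.
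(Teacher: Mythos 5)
Your proposal is correct and follows essentially the same route as the paper: factor the first-layer scalar equation $f(x_c,\dots,x_c,\lambda)=x_c\,\tilde g(x_c,\lambda)=0$ and apply the implicit function theorem under the genericity conditions $\sum_{i,j=0}^k f_{ij}\neq 0$ and $\sum_{i=0}^k f_{i\lambda}\neq 0$ to get exactly the two solution germs $x_c=0$ and $x_c=\beta(\lambda)$, then use $f_0\neq 0$ to solve for the remaining cells in terms of $x_{C_0}$. The only cosmetic differences are your use of Hadamard's lemma in place of the paper's explicit Taylor expansion and your layer-by-layer induction where the paper applies one implicit function theorem to the whole block $C_1\cup\dots\cup C_m$ at once.
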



\begin{proof}
Let $N$ be a feed-forward network with layers $C_0,C_1,\dots,C_m$. Let $f\in \mathcal{V}_k(N)$. 
Generically, assume that $f_0\neq 0$, $\sum_{i,j=0}^k f_{ij}\neq 0$ and  $\sum_{i=0}^k f_{i \lambda}\neq 0$, where 
$f_{ij}$ is the second order partial derivatives of $f(x_0,x_1,\dots,x_k,\lambda)$ at $(0,0,\dots,0,0)$ with respect to $x_i$ and $x_j$, 
and $f_{i \lambda}$ is the second order partial derivatives of $f$ at $(0,0,\dots,0,0)$ with respect $x_i$ and $\lambda$, 
for $0\leq i,j\leq k$.

The equilibrium branches of $f$ on $N$ are given by the solutions of $$f^N(x,\lambda)=0,$$ in a neighborhood of the origin.
The Taylor expansion of $f$ at $(0,0,\dots,0,0)$ is given by
\begin{align*}
f(x,x_1,\dots,x_k,\lambda)=& \sum_{i=0}^{k}f_i x_i + \sum_{i=0}^k f_{i\lambda}x_i \lambda  +  \sum_{i,j=0}^k  \frac{f_{ij}}{2}x_i x_j+ h.o.t.,
\end{align*}
where $h.o.t$ denotes high order terms.

For $c\in C_0$, we have that 
\begin{align*}
f^N_{c}(x,\lambda)=0\Leftrightarrow& f(x_c,x_c,\dots,x_c,\lambda)=0.\\
\Leftrightarrow & x_c \lambda\sum_{i=0}^k f_{i\lambda}  +  x_c^2\sum_{i,j=0}^k \frac{f_{ij}}{2}+ h.o.t.=0\\
\Leftrightarrow & x_c=0 \vee \lambda\sum_{i=0}^k f_{i\lambda}  +  x_c\sum_{i,j=0}^k  \frac{f_{ij}}{2}+ h.o.t.=0
\end{align*}

Using the Implicit Function Theorem, there exist $\lambda_0>0$ and a germ $\beta:]-\lambda_0,\lambda_0[\rightarrow \mathbb{R}$ such that $\beta(0)=0$ and  
$$f(x_c,x_c,\dots,x_c,\lambda)=0 \Leftrightarrow  x_c=0 \vee x_c= \beta(\lambda),\quad\quad -\lambda_0<\lambda<\lambda_0.$$

Denote by $D$ the set of cells $C_1\cup \dots\cup C_m$. Since $f_0\neq 0$, the matrix $\left[\partial f_{d}^{N}/\partial x_{d'}\right]_{d,d'\in D}$is invertible.
By the Implicit Function Theorem, there exist $\lambda'_0>0$ and $W:\mathbb{R}^{|C_0|}\times]-\lambda'_0,\lambda'_0[\rightarrow\mathbb{R}^{|D|}$ such that $\lambda'_0\leq \lambda_0$ and
\begin{align*}
f^N(x,\lambda)=0 \Leftrightarrow& \left(\bigwedge_{c\in C_0} f(x_c,x_c,\dots,x_c,\lambda)=0\right) \wedge x_{D}=W(x_{C_0},\lambda).\\
 \Leftrightarrow&  \left(\bigwedge_{c\in C_0} [x_{c}=0 \vee x_c= \beta(\lambda)]\right) \wedge x_{D}=W(x_{C_0},\lambda).
\end{align*}
for $-\lambda'_0<\lambda<\lambda'_0$.

Therefore any equilibrium branch is uniquely determined by its value at the cells of the first layer $C_0$
and each cell of $C_0$ has one of two possible values. 
So there are $2^{|C_0|}$ equilibrium branches.
\end{proof}

\subsection{Steady-state bifurcations for  FFNs associated with the internal dynamics}\label{sec:ffnssbar}

Next, we focus on the bifurcation problem when $f\in \mathcal{V}_0(N)$.
We follow the analysis done in \cite[Section 2]{RS13}.

\begin{defi}[{\cite[Definition 2.2]{RS13}}]\label{def:RS1322}
Let $N$ be a feed-forward network and $b:D\rightarrow \mathbb{R}^{|N|}$ a bifurcation branch on $N$, where $D=[0,\lambda_0[$ or $D=]-\lambda_0,0]$.
For any cell $c$ in $N$ such that $b_c\neq 0$, we say that $b_c$ \emph{has square-root-order $p_c$ and slope $s_c$} and write that  $b_c\sim \mathcal{O}( 2^{ -p_c})$, if $p_c$ is the smallest non-negative integer such that there is a smooth function $b^*_c:[0,\lambda_0^{2^{-p_c}}[\rightarrow \mathbb{R}$ satisfying $$b_c(\lambda)=  b^*_c(|\lambda|^{2^{-p_c}}), \quad s_c=\lim_{\substack{|\lambda|\searrow 0\\ \lambda\in D}}\frac{b^*_c(|\lambda|)}{\lambda}\neq 0.$$
If $b_c=0$, we say that $b_c$ \emph{has square-root-order $-1$ and slope $0$}.
For a subset of cells $A$, we say that $b$ \emph{has square-root-order $p$ in $A$} and we write that $b_A\sim \mathcal{O}(2^{ -p})$, if  \[p=\underset{c\in A}{\max}\left\{p_c: b_c\sim \mathcal{O}(2^{ -p_c})\right\}.\qedhere\]
\end{defi}

In \cite{RS13}, the authors describe the bifurcation branches on feed-forward networks that have only one cell in each layer, \cite[Theorem 2.3]{RS13}. Using the same ideas, we study the bifurcation branches on any feed-forward network. In the next two lemmas, we see how the square-root-order of a solution to $f(x,x_1,\dots,x_k,\lambda)=0$ grows when the square-root-order of the inputs $x_1,\dots,x_k$ are known. In the first lemma, we consider inputs that are defined for positive values of the parameter $\lambda$.

\begin{lem}\label{lem:growsropos}
Let $f\in\mathcal{V}_0(N)$ generic, $y:[0,\lambda_0[\rightarrow \mathbb{R}^k$ a germ, $p_1,\dots,p_k$ and $s_1,\dots,s_k$ such that $y_i$ has square-root-order $p_i$ and slope $s_i$ for $1\leq i\leq k$. Suppose that $p:=\max\{p_1,\dots,p_k\}\geq 0$ and define $$A:=\left\{i: y_i \sim \mathcal{O}(2^{-p})\right\}
,\quad\quad Z=\sum_{i\in A}\frac{f_i s_i}{f_{00}}.$$

\noindent
$(i)$ If $Z<0$, then there exist $0<\lambda^*_0<\lambda_0$ and germs $b^{+}, b^{-}:[0,\lambda^*_0[\rightarrow \mathbb{R}$ such that $b^{\pm}$ have square-root-order $p+1$ and slope $\pm \sqrt{-2Z}$, and
$$f(x,y(\lambda),\lambda)=0 \Leftrightarrow x=b^{\pm}(\lambda),\quad \quad 0<\lambda< \lambda^*_0$$

\noindent
$(ii)$ If $Z>0$, then the equation $f(x,y(\lambda),\lambda)=0$ has only the trivial solution $(x,\lambda)=(0,0)$.
\end{lem}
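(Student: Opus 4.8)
The strategy is to substitute the given germ $y(\lambda)$ into $f$, expand in a suitable fractional power of $\lambda$, and solve the resulting equation by the Implicit Function Theorem. First I would write $t = |\lambda|^{2^{-p}}$, so that by Definition~\ref{def:RS1322} each $y_i$ with $i\in A$ equals $y_i^*(t)$ for a smooth $y_i^*$ with $y_i^*(t) = s_i t + \mathcal{O}(t^2)$, while each $y_i$ with $i\notin A$ has square-root-order strictly less than $p$ and hence is $\mathcal{O}(t^2)$ (an even function of $t$ vanishing to order at least $2$). Since $f\in\mathcal{V}_0(N)$ we have $f_0 = 0$, and generically $f_{00}\neq 0$; I would also use $f(0,\dots,0,\lambda)=0$ to kill the pure-$\lambda$ terms. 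The Taylor expansion of $f$ then gives, writing $x$ for the unknown,
$$
f(x, y(\lambda), \lambda) = \sum_{i=1}^k f_i\, y_i + \frac{f_{00}}{2}x^2 + \Big(\sum_{i=1}^k f_{0i} y_i\Big) x + \text{(terms of higher order in } x, y, \lambda).
$$
Substituting $y_i = y_i^*(t)$ and using $y_i^* = s_i t + \mathcal O(t^2)$ for $i\in A$, the leading behaviour of the non-$x$ part is $t\sum_{i\in A} f_i s_i + \mathcal{O}(t^2) = f_{00} Z\, t + \mathcal{O}(t^2)$.

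Next I would look for a solution of the form $x = u\, t$ (this is where the extra square root appears: $t$ is already a $2^{-p}$-th root of $|\lambda|$, and $x\sim t$ means square-root-order $p$ — wait, I should be careful). Actually the cleaner route: write $x = \xi\sqrt{t}$... no. Let me set it up as: divide the equation $f(x,y(\lambda),\lambda)=0$ by $t$ after the substitution and seek $x$ of order $\sqrt{t}$. Setting $x = w\sqrt{t}$, the $x^2$ term contributes $\tfrac{f_{00}}{2}w^2 t$, the cross term $\big(\sum f_{0i}y_i\big)x$ contributes order $t\cdot\sqrt t = t^{3/2}$, and the linear-in-$y$ part contributes $f_{00}Z\,t + \mathcal O(t^2)$; dividing through by $t$ gives
$$
\frac{f_{00}}{2}w^2 + f_{00} Z + \mathcal{O}(\sqrt{t}) = 0,
$$
i.e. $G(w, \sqrt{t}) := \tfrac{f_{00}}{2}w^2 + f_{00}Z + (\text{higher order in }\sqrt t) = 0$. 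At $\sqrt t = 0$ this is $w^2 = -2Z$. If $Z < 0$ there are two simple roots $w = \pm\sqrt{-2Z}$, each with $\partial G/\partial w = f_{00} w \neq 0$, so the Implicit Function Theorem (in the variable $\sqrt t$) yields smooth $w^{\pm}(\sqrt t)$ with $w^{\pm}(0) = \pm\sqrt{-2Z}$; unwinding, $b^{\pm}(\lambda) = w^{\pm}(|\lambda|^{2^{-(p+1)}})\,|\lambda|^{2^{-(p+1)}}$ is smooth in $|\lambda|^{2^{-(p+1)}}$, has slope $\pm\sqrt{-2Z}$, and has square-root-order exactly $p+1$ (one must check $p+1$ is minimal, i.e. that $w^\pm(\sqrt t)\sqrt t$ is genuinely not a smooth function of $|\lambda|^{2^{-p}}=t$ — this follows because its leading term is $\pm\sqrt{-2Z}\sqrt t$, an odd, non-smooth function of $t$ near $0$, as $Z\neq 0$). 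If $Z > 0$, then $w^2 = -2Z < 0$ has no real solution near $\sqrt t = 0$, and more precisely $f_{00}G(w,\sqrt t) = \tfrac{f_{00}^2}{2}w^2 + f_{00}^2 Z + \mathcal O(\sqrt t)$ stays bounded away from $0$ for small $w$ and small $t$, so $f(x,y(\lambda),\lambda)\neq 0$ for $(x,\lambda)$ near $(0,0)$ with $\lambda>0$, leaving only the trivial solution.

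The main obstacle is bookkeeping the error terms: one must verify that after dividing by $t$ every omitted term is genuinely $\mathcal{O}(\sqrt t)$ (uniformly, and smooth in $\sqrt t$), which requires tracking that the contributions from $i\notin A$, from the cross terms $f_{0i}y_i x$, from $f_{i\lambda}$-type terms, and from all cubic-and-higher terms each vanish to sufficient order in $t$ and $x$. This is routine but needs the genericity hypothesis precisely to guarantee $f_{00}\neq 0$ (and $f_0=0$ from $f\in\mathcal V_0(N)$); no other nondegeneracy is needed for this lemma. The argument mirrors the one-cell-per-layer analysis of \cite[Section 2]{RS13}, adapted so that the ``input'' $y$ is an arbitrary germ of prescribed square-root-order rather than the coordinate of a single upstream cell.
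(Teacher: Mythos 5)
Your proposal is correct and follows essentially the same route as the paper: your substitution $t=|\lambda|^{2^{-p}}$, $x=w\sqrt{t}$ is the paper's change of variables $\mu=\lambda^{2^{-(p+1)}}$, $x=\mu z$ in disguise, and after dividing out the leading power both arguments reduce to the same equation $\tfrac{f_{00}}{2}w^2+f_{00}Z+\mathcal{O}(\mu)=0$, solved by the Implicit Function Theorem at the simple roots $w=\pm\sqrt{-2Z}$ (with the sign analysis giving nonexistence when $Z>0$). Your extra check that the square-root-order is exactly $p+1$ is a point the paper leaves implicit.
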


\begin{proof}
Let $f\in\mathcal{V}_0(N)$, $y:[0,\lambda_0[\rightarrow \mathbb{R}^k$, $p_1,\dots,p_k$ and $s_1,\dots,s_k$ such that $y_i$ has square-root-order $p_i$ and slope $s_i$ for $1\leq i\leq k$. Suppose that $p:=\max\{p_1,\dots,p_k\}\geq 0$ and define $$A:=\left\{i: y_i \sim \mathcal{O}(2^{-p})\right\},\quad\quad Z=\sum_{i\in A}\frac{f_i s_i}{f_{00}}.$$
 
Recall the Taylor expansion of $f$ at the origin
\begin{align*}
f(x,x_1,\dots,x_k,\lambda)=& \sum_{i=1}^{k}f_i x_i+ \frac{f_{00}}{2}x^2 +f_{0\lambda}x\lambda+ \sum_{i=1}^k f_{i\lambda}x_i \lambda  +\\
+& \sum_{i=1}^k f_{0i}x_i x + \sum_{i,j=1}^k \frac{f_{ij}}{2}x_i x_j+ h.o.t..
\end{align*}

For $\lambda\geq 0$, consider the following transformation of variables 
$$\mu=\lambda^{ 2^{ -(p+1)}},\quad\quad x=\mu z,\quad\quad y_i(\lambda)=\lambda^{2^{-p_i}}w_i(\mu).$$
Then 
$$w_i(0)=\lim_{\lambda\searrow 0} \frac{y_i(\lambda)}{\lambda^{2^{-p_i}}}
=s_i,\quad\quad\lambda=\mu^{2^{(p+1)}},\quad\quad y_i(\lambda)=\mu^{ 2^{(p+1-p_i)}}w_i(\mu).$$
Moreover $p-p_i= 0$, if  $i\in A$, and $p-p_i> 0$, otherwise.
Using the transformation of variables and the Taylor expansion of $f$, we obtain that
\begin{align*}
f(x,y(\lambda),\lambda)=0\Leftrightarrow &
\sum_{i=1}^{k}f_i y_i(\lambda)+ \frac{f_{00}}{2}x^2 +f_{0\lambda}x\lambda+ \sum_{i=1}^k f_{i\lambda}y_i(\lambda) \lambda  +\\
& + \sum_{i=1}^k f_{0i}y_i(\lambda) x + \sum_{i,j=1}^k \frac{f_{ij}}{2}y_i(\lambda) y_j(\lambda)+ h.o.t.=0\\
\Leftrightarrow&
\sum_{i\in A}f_i \mu^{2}w_i(0)+ \frac{f_{00}}{2}\mu^2 z^2 + h.o.t.
=0\\
\Leftrightarrow&\mu^{2}\left(\sum_{i\in A}f_i w_i(0)+ \frac{f_{00}}{2} z^2 + h.o.t.
\right)=0\\
\Leftrightarrow&\mu=0\vee \sum_{i\in A}f_i s_i+ \frac{f_{00}}{2} z^2 + h.o.t.
=0.
\end{align*}

Define $$h(z,\mu)=\sum_{i\in A}f_i s_i+ f_{00} z^2/2 + h.o.t..$$ 

If $Z<0$, we have that $h(\pm\sqrt{-2 Z},0)=0$ and $h_z(\pm\sqrt{-2 Z},0)\neq 0$. By the Implicit Function Theorem, there exist a neighborhood $U$ of $0$ and functions $z^{+}, z^{-}:U\rightarrow \mathbb{R}$ such that 
$$h(z,\mu)=0\Leftrightarrow z=z^{\pm}(\mu), \quad \quad z^{\pm}(\mu)=\pm \sqrt{-2 Z}+h.o.t.$$ 

Let $0<\lambda^*_0<\lambda_0$ and $b^{+},b^{-}:[0,\lambda^*_0[\rightarrow \mathbb{R}$ such that $[0,(\lambda^*_0)^{2^{-(p+1)}}[\subseteq U$ and 
$$b^{\pm}(\lambda)=\mu z^{\pm}(\mu)=  \pm \sqrt{-2 Z} \lambda^{ 2^{-(p+1)}}+h.o.t.\sim \mathcal{O}(2^{-(p+1)}).$$
 Then $b^{\pm}$ have square-root-order $p+1$ and slope $\pm \sqrt{-2Z}$, and
$$f(x,y(\lambda),\lambda)=0\Leftrightarrow \mu=0\vee  z= z^{\pm}(\mu)\Leftrightarrow  \mu z= \mu z^{\pm}(\mu)\Leftrightarrow  x= b^{\pm}(\lambda).
$$
This proves $(i)$.

If $Z>0$, then $h(z,0)$ is always positive, when $f_{00}>0$, or it is always negative, when $f_{00}<0$. So there is no solution to the equation $h(z,0)=0$. And the equation $f(x,y(\lambda),\lambda)=0$ has only the trivial solution $(x,\lambda)=(0,0)$, proving $(ii)$.
\end{proof}

The second lemma consider inputs defined for negative values of the parameter $\lambda$. Since the proof is very similar to the previous one, we omit it.

\begin{lem}\label{lem:growsroneg}
Let $f\in\mathcal{V}_0(N)$ generic, $y:]-\lambda_0,0]\rightarrow \mathbb{R}^k$ a germ, $p_1,\dots,p_k$ and $s_1,\dots,s_k$ such that $y_i$ has square-root-order $p_i$ and slope $s_i$ for $1\leq i\leq k$. Suppose that $p:=\max\{p_1,\dots,p_k\}\geq 0$ and define $$A:=\left\{i: y_i \sim \mathcal{O}(2^{-p})\right\},\quad\quad Z=\sum_{i\in A}\frac{f_i s_i}{f_{00}}.$$

\noindent
$(i)$ If $Z>0$, then there exist $\lambda_0<\lambda^*_0<0$ and germs $b^{+}, b^{-}:]-\lambda^*_0,0]\rightarrow \mathbb{R}$ such that $b^{\pm}$ have square-root-order $p+1$ and slope $\mp \sqrt{2Z}$, and
$$f(x,y(\lambda),\lambda)=0 \Leftrightarrow x=b^{\pm}(\lambda),\quad\quad 0>\lambda> \lambda^*_0.$$

\noindent
$(ii)$ If $Z<0$, then the equation $f(x,y(\lambda),\lambda)=0$ has only the trivial solution $(x,\lambda)=(0,0)$.
\end{lem}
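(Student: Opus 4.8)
The plan is to mimic the proof of Lemma~\ref{lem:growsropos} line by line, adjusting the sign bookkeeping that comes from letting $\lambda$ be negative rather than positive. The key point is that in the positive case one writes $\mu=\lambda^{2^{-(p+1)}}$ with $\mu\geq 0$, and the leading-order reduced equation is $\sum_{i\in A}f_is_i+\tfrac{f_{00}}{2}z^2+h.o.t.=0$; here, for $\lambda\leq 0$, the natural substitution is $\mu=|\lambda|^{2^{-(p+1)}}\geq 0$, $x=\mu z$, and $y_i(\lambda)=|\lambda|^{2^{-p_i}}w_i(\mu)$ with $w_i(0)=s_i$ (this is exactly Definition~\ref{def:RS1322} applied on $D=]-\lambda_0,0]$). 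Since $\lambda\leq 0$ we have $\lambda=-\mu^{2^{p+1}}$, so every term in the Taylor expansion that carries an explicit factor of $\lambda$ (namely $f_{0\lambda}x\lambda$, $\sum f_{i\lambda}y_i\lambda$) acquires a sign but is still higher order in $\mu$ than the $\mu^2$ terms; the terms $\sum_{i\in A}f_iy_i$ and $\tfrac{f_{00}}{2}x^2$ both scale as $\mu^2$ and survive. Thus, after dividing by $\mu^2$, one again reaches
\[
\mu=0\ \vee\ \sum_{i\in A}f_is_i+\frac{f_{00}}{2}z^2+h.o.t.=0.
\]

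First I would record the transformation of variables and verify the scaling of each monomial, exactly as in the previous proof, noting $p-p_i=0$ for $i\in A$ and $p-p_i>0$ otherwise, so that only the $A$-indexed linear terms and the quadratic term $\tfrac{f_{00}}{2}x^2$ contribute at order $\mu^2$. Then I would set $h(z,\mu)=\sum_{i\in A}f_is_i+f_{00}z^2/2+h.o.t.$ and observe $h(z,0)=f_{00}(z^2/2+Z)$ using $Z=\sum_{i\in A}f_is_i/f_{00}$. If $Z>0$, then $z^2/2+Z>0$ for all real $z$, so $h(z,0)=0$ has no real solution and $f(x,y(\lambda),\lambda)=0$ forces $\mu=0$, i.e. only the trivial solution $(x,\lambda)=(0,0)$; wait — I must be careful, because I have swapped the roles of the two cases relative to Lemma~\ref{lem:growsropos}. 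Let me recompute: with $\mu=|\lambda|^{2^{-(p+1)}}$ the reduced equation is still $z^2/2 = -Z$, which has solutions $z=\pm\sqrt{-2Z}$ precisely when $Z<0$; but the statement of Lemma~\ref{lem:growsroneg} claims solutions exist when $Z>0$. The resolution is the sign flip hidden in $w_i(0)=s_i$: reexamining Definition~\ref{def:RS1322}, the slope is $s_c=\lim_{|\lambda|\searrow 0}b^*_c(|\lambda|)/\lambda$, and for $\lambda\in D=]-\lambda_0,0]$ this limit is taken as $\lambda\to 0^-$, so $b^*_c(|\lambda|)=s_c\lambda+h.o.t.=-s_c|\lambda|+h.o.t.$; hence $y_i(\lambda)=|\lambda|^{2^{-p_i}}w_i(\mu)$ with $w_i(0)=-s_i$, not $+s_i$. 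This replaces $\sum_{i\in A}f_is_i$ by $-\sum_{i\in A}f_is_i$ in $h(z,0)$, giving $h(z,0)=f_{00}(z^2/2-Z)$, so solutions $z=\pm\sqrt{2Z}$ exist exactly when $Z>0$, and $x=\mu z=\pm\sqrt{2Z}\,|\lambda|^{2^{-(p+1)}}+h.o.t.$; and since on $D$ we have $|\lambda|=-\lambda$, the slope $s=\lim b^*(|\lambda|)/\lambda = -(\pm\sqrt{2Z}) = \mp\sqrt{2Z}$, matching the stated conclusion.

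So the proof proper runs: (1) set up the substitution with the correct sign $w_i(0)=-s_i$; (2) compute that $f(x,y(\lambda),\lambda)=0 \Leftrightarrow \mu=0 \vee h(z,\mu)=0$ where $h(z,0)=f_{00}(z^2/2-Z)$; (3) in case $Z>0$, apply the Implicit Function Theorem at the two simple roots $z=\pm\sqrt{2Z}$ of $h(\cdot,0)$ (simple because $h_z(\pm\sqrt{2Z},0)=\pm f_{00}\sqrt{2Z}\neq 0$ generically), obtaining $z^\pm(\mu)=\pm\sqrt{2Z}+h.o.t.$ on a neighborhood $U$ of $0$, then pull back to $b^\pm(\lambda)=\mu z^\pm(\mu)$ on $]-\lambda_0^*,0]$ with $(\lambda_0^*)$ chosen so that $[0,|\lambda_0^*|^{2^{-(p+1)}}[\subseteq U$, read off square-root-order $p+1$ and slope $\mp\sqrt{2Z}$; (4) in case $Z<0$, note $z^2/2-Z>0$ for all real $z$ so $h(z,0)=0$ is unsolvable and only the trivial solution remains. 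The main obstacle, and the reason the authors said the proof is "very similar" but presumably still wanted to state it, is precisely getting the sign conventions of Definition~\ref{def:RS1322} straight so that the case split $Z>0$ versus $Z<0$ comes out as stated; everything else is a transcription of Lemma~\ref{lem:growsropos}. I would therefore, in the write-up, simply say "the proof is identical to that of Lemma~\ref{lem:growsropos} after replacing $\lambda$ by $|\lambda|$ and $s_i$ by $-s_i$ in the change of variables, which accounts for the interchange of the roles of $Z>0$ and $Z<0$," and leave it at that.
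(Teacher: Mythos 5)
Your proposal is correct and takes exactly the approach the paper intends: the paper omits this proof, stating only that it is ``very similar'' to that of Lemma~\ref{lem:growsropos}, and your argument is that transcription with the sign bookkeeping done properly. In particular, you correctly identify the one genuinely delicate point --- that on $D=]-\lambda_0,0]$ the slope convention of Definition~\ref{def:RS1322} forces $w_i(0)=-s_i$ in the substitution, which turns the reduced equation into $z^2/2-Z=0$ and thereby interchanges the roles of $Z>0$ and $Z<0$ and produces the slopes $\mp\sqrt{2Z}$ as stated.
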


Now, we describe the square-root-orders of any bifurcation branch in the layers of a feed-forward network.

\begin{prop}\label{bifbrasquarootorderlay}
Let $N$ be a feed-forward network with layers $C_0,C_1,\dots,C_m$ and $f\in\mathcal{V}_0(N)$ generic. If $b$ is a bifurcation branch of $f$ on $N$, then
there exists $1\leq r\leq m$ such that $$b_{C_0}=\dots= b_{C_{r-1}}=0, b_{C_{r}}\sim \mathcal{O}(2^{ 0}), b_{C_{r+1}}\sim \mathcal{O}(2^{ -1}), \dots, b_{C_m}\sim \mathcal{O}(2^{ (r-m)}).$$
\end{prop}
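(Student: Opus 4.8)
The proof proceeds by induction along the layers, propagating information from the first layer downward using the two growth lemmas (Lemma~\ref{lem:growsropos} and Lemma~\ref{lem:growsroneg}). First I would observe that since $b$ is a bifurcation branch, it is by definition nontrivial, so there is some layer on which $b$ is not identically zero; let $C_r$ be the first such layer, i.e. $b_{C_0}=\dots=b_{C_{r-1}}=0$ and $b_{C_r}\neq 0$. I would first show $r\geq 1$: for a cell $c\in C_0$, admissibility forces $f(b_c,b_c,\dots,b_c,\lambda)=0$, and since $f\in\mathcal{V}_0(N)$ we have $f_0=0$, so the lowest-order term of this scalar equation in $b_c$ is $\tfrac{1}{2}(\sum_{i,j}f_{ij})b_c^2$ together with the $f_{0\lambda}b_c\lambda$ term; genericity ($\sum_{i,j}f_{ij}\neq 0$, say) then forces $b_c=0$ near $\lambda=0$ by the Implicit Function Theorem, hence $b_{C_0}=0$ and $r\geq 1$. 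Restricting to a domain $D$ of the form $[0,\lambda_0[$ or $]-\lambda_0,0]$ (one handles each sign of $\lambda$ separately, using Lemma~\ref{lem:growsropos} resp.\ Lemma~\ref{lem:growsroneg}), one can speak of square-root-orders and slopes of each component.

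Next I would run the induction. The base case is layer $C_r$: pick any cell $c\in C_r$ with $b_c\neq 0$. Its inputs $\sigma_i(c)$ all lie in $C_{r-1}$, where $b$ vanishes, so $b_{\sigma_i(c)}=0$ for all $i$; thus the equation governing $b_c$ is $f(b_c,0,\dots,0,\lambda)=0$, and again using $f_0=0$ the leading balance is $\tfrac12 f_{00}b_c^2+f_{0\lambda}b_c\lambda+\text{h.o.t.}=0$, whose nontrivial solution branch has square-root-order $0$ (slope $-2f_{0\lambda}/f_{00}$) by the Implicit Function Theorem, assuming $f_{00}\neq0$ generically. So $b_c\sim\mathcal{O}(2^0)$ for every $c\in C_r$ with $b_c\neq0$; since also no cell in $C_r$ can have $b_c=0$ while a later layer is nonzero — actually this needs care, so I would instead just record $b_{C_r}\sim\mathcal{O}(2^0)$, noting at least one component has order $0$ and none can have order $>0$ (their inputs have order $-1$). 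For the inductive step, suppose $b_{C_{r+\ell}}\sim\mathcal{O}(2^{-\ell})$ for some $0\leq\ell<m-r$, with the sharper statement that every cell $d\in C_{r+\ell}$ has $b_d\sim\mathcal{O}(2^{-\ell})$ or $b_d=0$, and that in fact $b_d\neq0$ for the relevant cells. Take $c\in C_{r+\ell+1}$. Its inputs lie in $C_{r+\ell}$ and have square-root-orders $\leq\ell$; set $p=\max$ of these and $A,Z$ as in the lemmas. Apply Lemma~\ref{lem:growsropos}(i) (or \ref{lem:growsroneg}(i)): the sign of $Z$ is generically nonzero, and on the domain where solutions exist one gets $b_c\sim\mathcal{O}(2^{-(p+1)})$. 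The key point is that generically $p=\ell$ for the cells $c$ we care about — I would argue that if the internal-dynamics bifurcation propagates, the "leading" cells of each layer feed leading cells of the next — so that $b_c\sim\mathcal{O}(2^{-(\ell+1)})$, giving $b_{C_{r+\ell+1}}\sim\mathcal{O}(2^{-(\ell+1)})$. Iterating to $\ell=m-r$ yields $b_{C_m}\sim\mathcal{O}(2^{r-m})$.

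The main obstacle I expect is bookkeeping the case where $Z>0$ (resp.\ $Z<0$) in the growth lemmas, i.e.\ where the relevant component of the branch does \emph{not} continue past a given layer on one side of $\lambda=0$: one must show this is consistent with $b$ being a genuine branch (defined on a half-domain) and that the maximum in Definition~\ref{def:RS1322} is still attained with the claimed value, so that the layer-wise order statement $b_{C_j}\sim\mathcal{O}(2^{\cdot})$ holds with the right exponent even though some cells in the layer may be zero. Concretely, I would need: (a) within a layer, the cells whose inputs include the maximal-order cells of the previous layer realize the next order, and there is always at least one such cell because of the feed-forward connectivity assumption (every cell in $C_{j-1}$ is the source of some edge into $C_j$); and (b) the sign condition $Z\neq0$ can be guaranteed on an open dense set of $f$'s, which is where the word "generic" does its work. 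Handling (a) carefully — tracking which cell receives which input and ensuring the nonzero part of the branch genuinely propagates down every remaining layer rather than dying out — is the delicate combinatorial core; the analytic content is entirely contained in the two growth lemmas already proved.
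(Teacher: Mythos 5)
Your overall strategy --- locate the first nonzero layer $C_r$, extract order $0$ there from the balance $\tfrac12 f_{00}x^2+f_{0\lambda}x\lambda$, then push the order up one unit per layer via Lemmas~\ref{lem:growsropos} and~\ref{lem:growsroneg}, using the standing assumption that every cell of $C_j$ feeds some cell of $C_{j+1}$ so that the maximal order is realized --- is the same as the paper's, which merely packages the layer step as an induction on the number of layers by restricting $N$ to its initial segments. However, your argument that $r\geq 1$ is wrong as written. For $c\in C_0$ every input of $c$ is $c$ itself, so the linear part of $f(x_c,\dots,x_c,\lambda)$ in $x_c$ is $\bigl(\sum_{i=0}^k f_i\bigr)x_c=\bigl(\sum_{i=1}^k f_i\bigr)x_c$: the hypothesis $f_0=0$ kills only the internal term, not the coupling terms, and generically the valency $\sum_{i=1}^k f_i$ is nonzero. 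That nonvanishing \emph{linear} term is what lets the Implicit Function Theorem force $b_c\equiv 0$, and it is exactly what the paper uses. Worse, the leading balance you wrote, $\tfrac12\bigl(\sum_{i,j}f_{ij}\bigr)b_c^2+f_{0\lambda}b_c\lambda+\cdots=0$, would \emph{not} force $b_c=0$ even if it were correct: it factors as $b_c\cdot h(b_c,\lambda)$ with $h_x(0,0)=\tfrac12\sum_{i,j}f_{ij}\neq 0$, so the IFT produces a second, nontrivial branch $b_c=\beta(\lambda)$ --- this is precisely the transcritical scenario of Proposition~\ref{prop:bifbrasumnet} for $f\in\mathcal{V}_k(N)$. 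So this step both misidentifies the lowest-order term and draws a conclusion that does not follow from the balance you state.

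Two smaller points on the inductive step. First, the hypothesis you carry forward should be ``every cell of $C_{r+\ell}$ has square-root-order at most $\ell$, and at least one attains $\ell$,'' not ``every nonzero cell has order exactly $\ell$'': a cell of an intermediate layer all of whose inputs vanish can have order $-1$ or $0$, so mixed orders within a layer do occur (see Table~\ref{tab:sufcondisnotneccond}). Second, the point you flag as the ``delicate combinatorial core'' has a one-line resolution that you should state rather than defer: any cell $c'\in C_{r+\ell}$ of maximal order $\ell$ satisfies $c'=\sigma_i(c)$ for some $c\in C_{r+\ell+1}$ by the definition of a feed-forward network; for that $c$ the maximum input order is exactly $\ell$, and since the branch $b$ actually exists on its domain, alternative (ii) of the relevant growth lemma cannot hold (generically $Z\neq 0$), so alternative (i) gives $b_c$ order $\ell+1$. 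Your heuristic that ``leading cells feed leading cells'' is not what is needed; what is needed (and true) is that every leading cell feeds \emph{some} cell, and that cell becomes leading in the next layer.
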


\begin{proof}
Let $N$ be a feed-forward network with layers $C_0,C_1,\dots,C_m$, and $f\in\mathcal{V}_0(N)$. Denote by $(\sigma_i)_{i=1}^k$ the functions that represent $N$. Let $b$ be a bifurcation branch of $f$ on $N$. Since $b\neq 0$, define $$r=\min\{j: b_{C_j}\neq 0\}.$$ 

We check first that $1\leq r\leq m$. 
The restriction of $f^{N}(x,\lambda)=0$ to $C_0$ is equivalent to $f(x_c,x_c,\dots,x_c,\lambda)=0$, for every $c\in C_0$.  
Generically, we assume that $\sum_{i=0}^{k} f_j\neq 0$. 
By the Implicit Function Theorem, there exists an open neighborhood $D$ of $0$ such that $f(x_c,x_c,\dots,x_c,\lambda)=0$ if and only if  $x_c(\lambda)=0$, for $\lambda\in D$. Hence $b_{C_0}(\lambda)=(0,\dots,0)$ and $1\leq r\leq m$.

Now, we prove the result when $m=r$. Then $b_{C_0\cup C_1\cup \dots\cup C_{m-1}}=0$ and $f(b_c,0,\dots,0,\lambda)=0$, for every $c\in C_m$.
Generically, we can assume that $f_{00}\neq 0$ and $f_{0\lambda}\neq 0$.
The Taylor expansion of $f(x,0,\dots,0,\lambda)$ at the origin is
\begin{align*}
f(x,0,\dots,0,\lambda)=& f_{0\lambda}x\lambda+\frac{f_{00}}{2}x^2+h.o.t.\\
=& x(f_{0\lambda}\lambda+\frac{f_{00}}{2}x+h.o.t.)\\
=& x h(x,\lambda).
\end{align*}
Hence $$f(x,0,\dots,0,\lambda)=0\Leftrightarrow x=0 \vee h(x,\lambda)=0.$$ Note that $h(0,0)=0$ and $h_x(0,0)=f_{00}/2\neq 0$. Then there exist an open neighborhood $D$ of $0$ and a germ $b^0:D\rightarrow \mathbb{R}$ such that 
\begin{equation}\label{eq:germwithsro0}
b^0(\lambda)= -2\frac{f_{0\lambda}}{f_{00}}\lambda+h.o.t.,
\end{equation}
and 
$$h(x,\lambda)=0\Leftrightarrow x=b^0(\lambda),\quad\quad \lambda\in D.$$
 Moreover, $f(x,0,\dots,0,\lambda)=0$ if and only if $x(\lambda)=0$ or $x(\lambda)=b^0(\lambda)$. Then $b_{C_m}\in\{0,b^0\}^{|C_m|}$. 
Note that the restrictions of $b^0$ to the positive and negative values of $\lambda$ have the same square-root-order, $0$, and slope, $-2f_{0\lambda}/f_{00}$.
Because $b\neq 0$, there exists $c\in C_m$ such that $b_c=b^0$ has square-root-order $0$. Therefore
$$b_{C_0}=b_{C_1}=\dots= b_{C_{m-1}}=0, b_{C_{m}}\sim \mathcal{O}(2^{0}).$$

In order to prove the result we use an inductive argument on the number of layers. The case $m=1$ is covered in the analysis of $r=m$.  So we assume that the result is valid for networks with $m'+1\geq 2$ layers, and we prove it for networks with $m+1=m'+2$ layers.
 If $r=m$, then the result holds. 
Suppose that $r\leq m'$. 
Let $N'$ be the restriction of $N$ to the first $m'+1$ layers of $N$. 
Then $b'=b_{C_0\cup\dots\cup C_{m'}}$ is an equilibrium branch of $f$ on $N'$. 
From $r\leq m'$, we know that $b'\neq 0$ and $b'$ is a bifurcation branch of $f$ on $N'$. 
Since $r=\min\{j: b'_{C_j}\neq 0\}$, we have, by induction hypothesis, that 
$$ b_{C_0}=b_{C_1}=\dots= b_{C_{r-1}}=0, b_{C_{r}}\sim \mathcal{O}(2^{ 0}), \dots, b_{C_{m'}}\sim \mathcal{O}(2^{ (r-m')}).$$

For every $c\in C_m$ and $1\leq i\leq k$, we have that $\sigma_i(c)\in C_{m-1}$, $b_{\sigma_i(c)}$ has square-root-order $p_i\leq m-r-1$ and $b_c$ is a solution of the system $$f(x_c,b_{\sigma_1(c)}(\lambda),\dots,b_{\sigma_k(c)}(\lambda),\lambda)=0.$$
Following Lemmas~\ref{lem:growsropos} and ~\ref{lem:growsroneg}, define $p_c=\max\{p_1,\dots,p_k\}\leq m-r-1$.
If $p_c\geq 0$, then $b_c$ has square-root-order $p_c+1\leq m-r$. 
If $p_c=-1$, then $b_{\sigma_i(c)}(\lambda)=0$ for every $1\leq i\leq k$ and $b_c$ has square-root-order $-1$ or $0$.
Hence $b_c$ has square-root-order less or equal to $m-r$, for every $c\in C_m$.

Because $b_{C_{m-1}}\sim \mathcal{O}( 2^{(r-m+1)})$, there exists $c'\in C_{m-1}$ such that $b_{c'}$ has square-root-order $m-r-1$. And there exists $i$ and a cell $c\in C_m$ such that $\sigma_i(c)=c'$. So $p_c= m-r-1$ and $b_c$ has square-root-order $p_c+1=m-r$, by Lemmas~\ref{lem:growsropos} and ~\ref{lem:growsroneg}. Hence 
\[ b_{C_{m}}\sim \mathcal{O}( 2^{(r-m)}).\qedhere\]
\end{proof}

As we saw, the definitions of square-root-order and slope naturally extend to bifurcation branches defined in a neighborhood of $0$.
We note that Lemmas~\ref{lem:growsropos} and ~\ref{lem:growsroneg} also provide a formula to classify every bifurcation branch on a feed-forward network.

Let $N$ be a feed-forward network and $f\in\mathcal{V}_0(N)$. We define 
\begin{align*}
\Theta:\mathcal{B}(N,f)&\rightarrow \{-1,0,1\}\times \mathbb{Z}^{|N|}\times \mathbb{R}^{|N|}\\
b&\mapsto (\delta, (p_c)_{c} , (s_c)_{c}),
\end{align*}
where $\delta$ is $0$ if some function in the germ equivalence class of $b$ is defined in an open neighborhood of $0$, $\delta$ is $1$ ($-1$) if $b$ is defined only for positive (negative, respectively) values of $\lambda$, and $b_c$ has square-root-order $p_c$ and slope $s_c$, for each cell $c$ of $N$.
If $(\delta,(p_c)_c, (s_c)_c)\in \Theta(\mathcal{B}(N,f))$, then
\begin{enumerate}[label=$\Omega$.\arabic*]
\item $\delta=0 \Rightarrow \displaystyle \forall_c\  p_c\leq 0$,  \label{cond:ome1}
\item $p_c = -1 \Rightarrow \displaystyle \forall_i\  p_{\sigma_i(c)}=-1$,\label{cond:ome2}
\item $p_c > -1 \Rightarrow \displaystyle \forall_i\  p_{\sigma_i(c)}\leq p_c-1 \wedge \exists_{i'}\  p_{\sigma_{i'}(c)}= p_c-1$,\label{cond:ome3}
\item $p_c = -1 \Leftrightarrow s_c=0$,\label{cond:ome4}
\item $p_c =  0 \Rightarrow \displaystyle s_c=-\dfrac{2f_{0\lambda}}{f_{00}}$,\label{cond:ome5}
\item $p_c >  0 \Rightarrow s_c=\pm\sqrt{\displaystyle -\frac{2\delta}{f_{00}}\sum_{i\in A_c} f_i s_{\sigma_i(c)}}$,\label{cond:ome6}
\end{enumerate}	
where $A_c=\{i:p_{\sigma_i(c)}=p_c-1\}$. 
The statements \ref{cond:ome1}, \ref{cond:ome2} and \ref{cond:ome3} follow from Lemmas~\ref{lem:growsropos} and ~\ref{lem:growsroneg}, by reduction to the absurd. 
The equivalence \ref{cond:ome4} follows from  Definition~\ref{def:RS1322}. 
The statement \ref{cond:ome5} follows from the proof of Proposition~\ref{bifbrasquarootorderlay} and \ref{cond:ome3}, since $p_{\sigma_i(c)}=-1$ for every $1\leq i\leq k$. 
Finally, \ref{cond:ome6} follows from Lemma~\ref{lem:growsropos}, if $\delta=1$, and Lemma~\ref{lem:growsroneg}, if $\delta=-1$. 

Let $\Omega(N,f)\subseteq \{-1,0,1\}\times \mathbb{Z}^{|N|}\times \mathbb{R}^{|N|}$ be the set of points $(\delta,(p_c)_c, (s_c)_c)\in \{-1,0,1\}\times \mathbb{Z}^{|N|}\times \mathbb{R}^{|N|}$ satisfying  \ref{cond:ome1},\dots, \ref{cond:ome6}.
Next, we prove that $\Theta$ is an one-to-one correspondence between $\mathcal{B}(N,f)$ and $\Omega(N,f)$.

\begin{prop}\label{prop:equivtheta}
Let $N$ be a feed-forward network and $f\in\mathcal{V}_0(N)$ generic. If $(\delta,(p_c)_c, (s_c)_c)\in\Omega(N,f)$, then there exists a unique $b\in\mathcal{B}(N,f)$
such that $$\Theta(b)=(\delta,(p_c)_c, (s_c)_c).$$
\end{prop}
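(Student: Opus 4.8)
The plan is to build the branch $b$ layer by layer, following the same inductive scheme used in Proposition~\ref{bifbrasquarootorderlay}, but now with the data $(\delta,(p_c)_c,(s_c)_c)$ prescribing exactly which solution to pick at each step. First I would fix the domain: if $\delta=0$ take $D=]-\lambda_0,\lambda_0[$, and if $\delta=\pm 1$ take $D$ to be a half-interval of the corresponding sign. On the first layer $C_0$, condition \ref{cond:ome1}--\ref{cond:ome3} together force $p_c=-1$ for every $c\in C_0$ (any cell with $p_c\geq 0$ would need an input in $C_0$ with strictly smaller square-root-order, impossible since all of $C_0$ feeds only from itself, and \ref{cond:ome3} would be violated), hence by \ref{cond:ome4} also $s_c=0$; so we are forced to set $b_c\equiv 0$ on $C_0$, which is consistent with $f(0,\dots,0,\lambda)=0$ and is the unique choice since, generically, $\sum_{i=0}^k f_i\neq 0$ makes $x_c=0$ the only solution of $f(x_c,\dots,x_c,\lambda)=0$ near the origin (Implicit Function Theorem, as in Proposition~\ref{bifbrasquarootorderlay}).

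Next I would run the induction on layers: suppose $b$ has been constructed and shown unique on $C_0\cup\dots\cup C_{j-1}$, with each $b_c$ of square-root-order $p_c$ and slope $s_c$ as prescribed, and all germs defined on the same $D$. For a cell $c\in C_j$, the equation to solve is $f(x_c, b_{\sigma_1(c)}(\lambda),\dots,b_{\sigma_k(c)}(\lambda),\lambda)=0$, where all the inputs $b_{\sigma_i(c)}$ are already known. There are two cases. If $p_c=-1$: by \ref{cond:ome2} all inputs have $p_{\sigma_i(c)}=-1$, i.e. $b_{\sigma_i(c)}\equiv 0$, so the equation reduces to $f(x_c,0,\dots,0,\lambda)=0$, which by the factorization $x_c\,h(x_c,\lambda)$ in the proof of Proposition~\ref{bifbrasquarootorderlay} has exactly the two solutions $x_c=0$ and $x_c=b^0(\lambda)$; since $p_c=-1$ forces $s_c=0$ by \ref{cond:ome4} and $b^0$ has slope $-2f_{0\lambda}/f_{00}\neq 0$, the only consistent choice is $b_c\equiv 0$. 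If $p_c>-1$: by \ref{cond:ome3} we have $\max_i p_{\sigma_i(c)}=p_c-1=:p$, and the set $A_c=\{i:p_{\sigma_i(c)}=p_c-1\}$ is nonempty, so Lemma~\ref{lem:growsropos} (when $\delta=1$) or Lemma~\ref{lem:growsroneg} (when $\delta=-1$) applies with that $p$; the quantity $Z=\sum_{i\in A_c}f_i s_{\sigma_i(c)}/f_{00}$ has the sign dictated by \ref{cond:ome6} (the formula $s_c=\pm\sqrt{-2\delta Z}$ being real forces $\delta Z<0$, which is exactly the case $(i)$ of the relevant lemma), giving precisely two solution germs $b^{+},b^{-}$ of square-root-order $p+1=p_c$ and slopes $\pm\sqrt{-2\delta Z}$; we pick the one whose slope equals the prescribed $s_c$. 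When $p_c=0$ with some input having $p_{\sigma_i(c)}=-1$, condition \ref{cond:ome3} with $p_c-1=-1$ just says all inputs are zero, and we are back in the factorization case: the nonzero root $b^0$ has square-root-order $0$ and the prescribed slope by \ref{cond:ome5}. In every case there is a unique admissible choice for $b_c$, defined on a possibly smaller subinterval of $D$; shrinking $D$ finitely many times (once per cell) keeps everything on a common domain.

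The remaining routine points are: checking that the resulting $b:D\to\mathbb{R}^{|N|}$ is genuinely an equilibrium branch (each coordinate satisfies its admissibility equation by construction, and $b\neq 0$ since at least one $p_c\geq 0$ — indeed $r:=\min\{j:C_j$ has a cell with $p_c\geq 0\}$ is at most $m$ by the structure of $\Omega$), and that $\Theta(b)=(\delta,(p_c)_c,(s_c)_c)$, which is immediate from how each coordinate was selected. Uniqueness of $b$ in $\mathcal{B}(N,f)$ with the given $\Theta$-image follows because the layer-by-layer reconstruction left no freedom: at each cell the equation had at most two solution germs and the prescribed square-root-order and slope singled out one of them; I would phrase this as a second induction on layers mirroring the existence argument. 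The main obstacle — and the only genuinely delicate point — is the bookkeeping in the $p_c>-1$ case: one must verify that the hypotheses of Lemmas~\ref{lem:growsropos}--\ref{lem:growsroneg} are met with the correct value of $p$ (namely $p=p_c-1$, using \ref{cond:ome3} to guarantee $A_c\neq\emptyset$ and that no input exceeds this order), and that the sign condition on $Z$ imposed implicitly by \ref{cond:ome6} is exactly the ``case $(i)$'' hypothesis rather than ``case $(ii)$'', so that a solution of the prescribed type actually exists; this is where the compatibility between the combinatorial conditions $\Omega$.1--$\Omega$.6 and the analytic content of the two lemmas has to be checked carefully. The genericity assumptions ($f_0\neq 0$ on $C_0$, $f_{00}\neq 0$, $f_{0\lambda}\neq 0$, and whatever non-degeneracies Lemmas~\ref{lem:growsropos}--\ref{lem:growsroneg} require) are inherited directly from those lemmas and from Proposition~\ref{bifbrasquarootorderlay}.
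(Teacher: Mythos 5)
Your proposal is correct and follows essentially the same route as the paper: build $b$ cell by cell from the forced choices ($b_c=0$ when $p_c=-1$, the germ $b^0$ when $p_c=0$, and the branch from Lemma~\ref{lem:growsropos} or~\ref{lem:growsroneg} selected by the prescribed slope when $p_c>0$), with \ref{cond:ome1}--\ref{cond:ome6} guaranteeing at each step that case $(i)$ of the relevant lemma applies and that exactly one admissible germ exists, whence uniqueness. The only cosmetic difference is that you induct on layers while the paper inducts on the value of the square-root-order $p$; the two orderings are interchangeable here.
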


\begin{proof}
Let $N$ be a feed-forward network with layers $C_0,C_1,\dots, C_m$ and represented by the function $(\sigma_i)_{i=1}^k$ and $f\in\mathcal{V}_0(N)$ generic. 

Let $(\delta,(p_c)_c, (s_c)_c)\in\Omega(N,f)$. 
We construct the equilibrium branch $b$ of $f$ on $N$ such that  $\Theta(b)=(\delta,(p_c)_c, (s_c)_c)$. 
It follows from \ref{cond:ome3} that $p_c=-1$ for every $c\in C_0$ and $-1\leq p_c\leq m-1$, for every cell $c$ of $N$.

Let $c$ be a cell of $N$ such that $p_c=-1$. Then $s_c=0$, by \ref{cond:ome4}. Define $b_c$ as the germ defined on an open neighborhood of $0$ such that $b_c=0$. Then $b_c$ has square-root-order $p_c$ and slope $s_c$. It follows from \ref{cond:ome2} that 
$$f(b_c,b_{\sigma_1(c)},\dots,b_{\sigma_k(c)},\lambda)=f(0,0,\dots,0,\lambda)=0.$$

Let $c$ be a cell of $N$ such that $p_c=0$. Then $s_c= -2f_{0\lambda}/f_{00}$, by \ref{cond:ome5}, and $p_{\sigma_i(c)}=-1$ for $1\leq i\leq k$, by \ref{cond:ome3}. Define $b_c$ as the germ $b^0$ defined in (\ref{eq:germwithsro0}) on an open neighborhood of $0$. Then $b_c$ has square-root-order $p_c$, slope $s_c$ and
$$f(b_c,b_{\sigma_1(c)},\dots,b_{\sigma_k(c)},\lambda)=f(b^0,0,\dots,0,\lambda)=0.$$

The following germs are defined by induction on $p\geq 1$, i.e. we assume, for every cell $c'$ of $N$ such that $p_{c'}<p$, that $b_{c'}$ is germ which has square-root-order $p_{c'}$ and slope $s_{c'}$ and we define, for every cell $c$ of $N$ such that $p_{c}=p$, the germ $b_c$ which has square-root-order $p_{c}$ and slope $s_{c}$. Since  $p_c\leq m-1$, this process must terminate.
Let $p\geq 1$. Assume that $b_{c'}$ is a germ which has square-root-order $p_{c'}$ and slope $s_{c'}$, for every cell $c'$ such that $p_{c'}<p$.
Let $c$ be a cell of $N$ such that $p_c=p$.
Then $b_{\sigma_i(c)}$ is defined for every $1\leq i\leq k$, by \ref{cond:ome3}.
Consider the germ $y:D\rightarrow \mathbb{R}^{k}$ such that $y_i=b_{\sigma_i(c)}$ for every $1\leq i\leq k$, and let $b_c$ be the germ obtained in Lemma~\ref{lem:growsropos} (\ref{lem:growsroneg}), if $\delta=1$ ($-1$, respectively), such that $b_c$ has square-root-order $p_{c}$ slope $s_c$ and it is defined for positive (negative) values. It follows from \ref{cond:ome6} that there exists such germ and it is unique. Moreover,
$$f(b_c,b_{\sigma_1(c)},\dots,b_{\sigma_k(c)},\lambda)=0.$$

Define the germ $b=(b_c)_c:D\rightarrow \mathbb{R}^{|N|}$, where $D$ is the intersection of the domains of each $b_c$. By construction
$f^N(b(\lambda),\lambda)=0$, so $b$ is an equilibrium branch of $f$ on $N$.  Let $(\delta',(p'_c)_c, (s'_c)_c):=\Theta(b)$.
By construction, $p'_c=p_c$ and $s'_c=s_c$, for every cell $c$. 
If $\delta=0$, then $p_c\leq 0$ and $\delta'=0$, by \ref{cond:ome1}.
If $\delta=\pm 1$, then there exists $p_c>0$, by \ref{cond:ome4} and \ref{cond:ome6}. If $\delta=1$, then $b_c$ is defined for positive values and $\delta'=1$. Similar, if $\delta=-1$, then $\delta'=1$. Therefore 
$$\Theta(b)=(\delta,(p_c)_c, (s_c)_c).$$

We can see that in each step of the construction of $b$ that we choose the unique germ that respects the conditions of square-root-order, slope and be a solution to the equation.
\end{proof}

Let $N$ be a feed-forward network with layers $C_0,C_1,\dots,C_m$ and $f:\mathbb{R}^{k+1}\times\mathbb{R}\rightarrow \mathbb{R}\in\mathcal{V}_0(N)$ generic. Define 
\begin{equation}\label{eq:delta0tilde}
\tilde{\delta}=\sign(\displaystyle f_{0\lambda}\sum_{i=1}^k f_i)=\dfrac{\displaystyle f_{0\lambda}\sum_{i=1}^k f_i}{\left|\displaystyle f_{0\lambda}\sum_{i=1}^k f_i\right|},\quad \tilde{p}_0=-1,\quad \tilde{s}_0=0,
\end{equation}
and
\begin{equation}\label{eq:jtilde}
\tilde{p}_j=j-1,\quad \tilde{s}_j=- \sign(f_{0\lambda})\dfrac{2|f_{0\lambda}|^{2^{-(j-1)}}}{f_{00}}\left|\sum_{i=1}^k f_i \right|^{1-2^{-(j-1)}},
\end{equation}
for $1\leq j\leq m$.
Now, for each $1\leq r\leq m-1$, define $\delta^{r\pm}=\tilde{\delta}$,
$$p^{r\pm}_c=\tilde{p}_0,\quad s^{r\pm}_c=\tilde{s}_0, \quad\quad c\in C_0\cup\dots\cup C_{r-1},$$
$$ p^{r\pm}_c= \tilde{p}_{l+1},\quad s^{r\pm}_c=\tilde{s}_{l+1} \quad \quad  c\in C_{r+l}, 0<l<m-1-r,$$
$$ p^{r\pm}_c= \tilde{p}_{m-r},\quad s^{r\pm}_c=\pm\tilde{s}_{m-r} \quad \quad  c\in C_{m},$$
We also define $\delta^m= 0$,
$$p^m_c=-1, s^m_c=0, \quad c\in C_0\cup\dots\cup C_{m-1}, \quad p^{m}_c= 0, s^{m}_c=-\dfrac{2f_{0\lambda}}{f_{00}}, \quad c\in C_{m},$$
$$\delta^0= 0,\quad\quad p^0_c=-1, \quad\quad s^0_c=0, \quad c\in C_0\cup\dots\cup C_{m}.$$
For every $1\leq r\leq m-1$, $$(\delta^0,(p^0_c)_c,(s^0_c)_c), (\delta^m,(p^m_c)_c,(s^m_c)_c), (\delta^{r\pm},(p^{r\pm}_c)_c,(s^{r\pm}_c)_c)\in \Theta(N,f).$$

By Proposition~\ref{prop:equivtheta}, the set $\mathcal{B}(N,f)$ contains the trivial equilibrium branch $b^0$, 
a bifurcation branch $b^{m}$ such that 
$$ b^{m}_{C_0}=\dots= b^{m}_{C_{m}}=0, b^{m}_{C_{m}}\sim \mathcal{O}( 2^{0}),$$
and for every $1\leq r\leq m-1$ there exist two bifurcation branches $b^{r+}, b^{r-}$ such that 
$$ b^{r\pm}_{C_0}=\dots= b^{r\pm}_{C_{r-1}}=0, b^{r\pm}_{C_{r}}\sim \mathcal{O}( 2^{ 0}), b^{r\pm}_{C_{r+1}}\sim \mathcal{O}(2^{ -1}), \dots, b^{r\pm}_{C_m}\sim \mathcal{O}(2^{(r-m)}).$$
Hence there exists a bifurcation branch with square-root-order $r$, for every  $r$.

\begin{coro}\label{coro:existbifbrawithorder}
Let $N$ be a feed-forward network and $f\in\mathcal{V}_0(N)$. Generically, for every $1\leq r\leq m$, there exists $b\in\mathcal{B}(N,f)$ such that $$ b_{C_0}=\dots= b_{C_{r-1}}=0, b_{C_{r}}\sim \mathcal{O}( 2^{ 0}), b_{C_{r+1}}\sim \mathcal{O}(2^{ -1}), \dots, b_{C_m}\sim \mathcal{O}(2^{(r-m)}).$$
\end{coro}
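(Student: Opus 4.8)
The strategy is to deduce this from Proposition~\ref{prop:equivtheta}: for each $1\le r\le m$ I would write down an explicit tuple $(\delta,(p_c)_c,(s_c)_c)$, check that it belongs to $\Omega(N,f)$, and let the branch it determines be the desired $b$. The tuple is the one already assembled in the discussion preceding the corollary: put $p_c=-1$, $s_c=0$ for every cell $c$ in the layers $C_0,\dots,C_{r-1}$; put $p_c=j-r$ for every $c\in C_j$ with $r\le j\le m$, with $s_c=-2f_{0\lambda}/f_{00}$ on $C_r$ and the slopes on the higher layers the constants $\tilde s_{j-r+1}$ of (\ref{eq:jtilde}). For $r=m$ take $\delta=0$; for $r<m$ take $\delta=\tilde\delta=\sign\big(f_{0\lambda}\sum_{i=1}^k f_i\big)$ as in (\ref{eq:delta0tilde}), the two sign choices on the last layer giving the two branches $b^{r\pm}$. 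Reading off the $p_c$'s, this profile is exactly $b_{C_0}=\dots=b_{C_{r-1}}=0$, $b_{C_r}\sim\mathcal{O}(2^0)$, $b_{C_{r+1}}\sim\mathcal{O}(2^{-1})$, \dots, $b_{C_m}\sim\mathcal{O}(2^{r-m})$.

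The work is in verifying \ref{cond:ome1}--\ref{cond:ome6} for these tuples. Conditions \ref{cond:ome1}, \ref{cond:ome4}, \ref{cond:ome5} are immediate from the definitions of $\delta$, $p_c$ and $s_c$. Conditions \ref{cond:ome2} and \ref{cond:ome3} use the feed-forward structure: every edge into a cell of $C_j$ starts in $C_{j-1}$, so all inputs $\sigma_i(c)$ of a cell $c\in C_j$ carry the same $p$-value, namely $p_c-1$ (when $j>r$) or $-1$ (when $j\le r$); hence $\forall_i\,p_{\sigma_i(c)}\le p_c-1$ holds and the existential clause of \ref{cond:ome3} is automatic, and likewise \ref{cond:ome2} holds for cells with $p_c=-1$ since their inputs lie in a layer whose $p$-values are all $-1$ (or are loops on $C_0$). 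The remaining and decisive point is \ref{cond:ome6}: for $c\in C_j$ with $j>r$ one has $A_c=\{1,\dots,k\}$ and $s_{\sigma_i(c)}=\tilde s_{j-r}$ for every $i$, so \ref{cond:ome6} reduces to the single scalar recursion $\tilde s_{j-r+1}^2=-\tfrac{2\delta}{f_{00}}\,\tilde s_{j-r}\sum_{i=1}^k f_i$, and one must check that the right-hand side is positive so that the square root is real and nonzero.

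I would handle that sign check by an induction on $j$. The base case is $\tilde s_1=-2f_{0\lambda}/f_{00}$, forced by \ref{cond:ome5}; a short computation gives $-\tfrac{2\tilde\delta}{f_{00}}\tilde s_1\sum_i f_i=4\,\tilde\delta\, f_{0\lambda}\big(\sum_i f_i\big)/f_{00}^2>0$ precisely because $\tilde\delta=\sign\big(f_{0\lambda}\sum_i f_i\big)$. For the induction step one observes that the closed form (\ref{eq:jtilde}) makes $\sign(\tilde s_j)=-\sign(f_{0\lambda})\sign(f_{00})$ independent of $j$, so the sign of $-\tfrac{2\tilde\delta}{f_{00}}\tilde s_j\sum_i f_i$ is the same at every layer and equals the sign computed in the base case, i.e.\ it is always positive. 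This is the step I expect to be the main obstacle, since it is the bookkeeping that both justifies \ref{cond:ome6} and explains why $\delta$ must be chosen as $\tilde\delta$ when $r<m$; for $r=m$ it is vacuous because no cell then has $p_c>0$.

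Finally, with membership in $\Omega(N,f)$ established, Proposition~\ref{prop:equivtheta} yields a unique $b\in\mathcal{B}(N,f)$ with $\Theta(b)$ equal to the chosen tuple, and the claimed square-root-order profile is read off from the $p_c$'s; for $1\le r<m$ one may in fact take either of $b^{r+}$, $b^{r-}$. The genericity hypothesis is the one already used in Lemmas~\ref{lem:growsropos}, \ref{lem:growsroneg} and Proposition~\ref{bifbrasquarootorderlay}, together with $\sum_{i=1}^k f_i\ne0$ (so that $\tilde\delta$ is defined) and $\sum_{i=0}^k f_i\ne0$ (so that the first-layer coordinates are forced to vanish).
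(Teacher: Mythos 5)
Your proposal is correct and follows essentially the same route as the paper: the text preceding the corollary defines exactly the tuples $(\delta^{r\pm},(p^{r\pm}_c)_c,(s^{r\pm}_c)_c)$ and $(\delta^m,(p^m_c)_c,(s^m_c)_c)$ built from (\ref{eq:delta0tilde})--(\ref{eq:jtilde}), asserts they lie in $\Omega(N,f)$, and invokes Proposition~\ref{prop:equivtheta} to produce the branches $b^{r\pm}$ and $b^m$ with the stated square-root-order profile. Your verification of \ref{cond:ome1}--\ref{cond:ome6}, in particular the sign bookkeeping showing $-\tfrac{2\tilde\delta}{f_{00}}\tilde s_j\sum_i f_i>0$ at every layer, is exactly the computation the paper leaves implicit.
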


In \cite[Theorem 2.3]{RS13}, the authors prove that the germs $b^0=(b^0_c)_c$, $b^{r\pm}=(b^{r\pm}_c)_c$ and $b^{m}=(b^{m}_c)_c$, where $1\leq r\leq m-1$ are the unique equilibrium branches of $f$ on $N$, if $N$ has only one cell in each layer. By examining the set $\Theta(N,f)$, we can see that $(\delta^0,(p^0_c)_c,(s^0_c)_c)$, $(\delta^m,(p^m_c)_c,(s^m_c)_c)$, $(\delta^{r\pm},(p^{r\pm}_c)_c,(s^{r\pm}_c)_c)$ are the unique elements of $\Theta(N,f)$, when $N$ has only one cell in each layer and recover \cite[Theorem 2.3]{RS13}.
The characterization of bifurcation branches is illustrated in the following example.

\begin{exe}\label{ex:sufcondisnotneccond}
Let $N$ be the feed-forward network represented in Figure~\ref{fig:sufcondisnotneccond} and $f\in\mathcal{V}_0(N)$ generic.
We assume that $f_{0\lambda}f_{00}>0$, the other case is identical.
The possible bifurcation branches of $f$ on $N$ are described in Table~\ref{tab:sufcondisnotneccond}, where 
$\delta_1=-\sign(f_{0\lambda}f_1)$, $\delta_2=-\sign(f_{0\lambda}f_2)$, $\delta_3=-\sign(f_{0\lambda}f_3)$,
$\delta_4=-\sign(f_{0\lambda}(f_1+f_2))$, $\delta_5=-\sign(f_{0\lambda}(f_1 + f_3))$, $\delta_6=-\sign(f_{0\lambda}(f_2+f_3))$,
$\delta_7=-\sign(f_{0\lambda}(f_1+f_2+f_3))$,
$\gamma_a=\pm 1$, $\kappa_1=\delta_3\delta_{7}$, $\kappa_2= \delta_1\delta_{3}$, $\kappa_3=\delta_1\delta_{7}$, $\tilde{s}^0=-2f_{0\lambda}/f_{00}$, 
$s^{1}_{a}$, $s^{2}_{a}$ and $s^{3}_{a}$ are inductively calculated using (\ref{cond:ome6}), e.g. for the bifurcation branches in row $17$,
$$\tilde{s}^1_{7}=\dfrac{2}{|f_{00}|} \sqrt{\displaystyle |(f_1+f_2+f_3) f_{0\lambda}|},$$ 
$$\tilde{s}^2_4= \dfrac{2}{|f_{00}|}
\sqrt{\displaystyle \left|\gamma_1 f_1+\gamma_2 f_2\right| \sqrt{|f_1+f_2+f_3||f_{0\lambda}|}},$$
under the following condition
\begin{equation}\label{eq:2cond123123123}
(f_1+f_2+f_3)(\gamma_1 f_1+\gamma_2 f_2)>0.
\end{equation}
The other conditions are
\begin{equation}\label{eq:2cond1112}
f_1(\gamma_1 f_1 \sqrt{|f_1|}+ \gamma_2 f_2 \sqrt{|f_1+f_2|})>0,
\end{equation}
\begin{equation}\label{eq:2cond3233}
f_3(\gamma_1 f_1 \sqrt{|f_2+f_3|}+ \gamma_2 f_2 \sqrt{|f_3|})>0,
\end{equation}
\begin{equation}\label{eq:2cond1112123}
f_1(\gamma_1f_1 \sqrt{|f_1|}+\gamma_2f_2 \sqrt{|f_1+f_2|} + \gamma_3f_3 \sqrt{|f_1+f_2+f_3|})>0,
\end{equation}
\begin{equation}\label{eq:2cond3233123}
f_3(\gamma_1f_1 \sqrt{|f_2+f_3|}+\gamma_2f_2 \sqrt{|f_3|} + \gamma_3f_3 \sqrt{|f_1+f_2+f_3|})>0,
\end{equation}
\begin{equation}\label{eq:2cond123123123123}
(f_1+f_2+f_3) (\gamma_1f_1+\gamma_2f_2 + \gamma_3f_3 )>0,
\end{equation}
\begin{equation}\label{eq:2cond31232323}
\begin{cases}
f_3(\gamma_1f_1 \sqrt{|f_1+f_2+f_3|}+\gamma_2(f_2+f_3) \sqrt{|f_2+f_3|})>0\\
f_3(\gamma_1(f_1+f_2) \sqrt{|f_1+f_2+f_3|}+ \gamma_2f_3 \sqrt{|f_2+f_3|})>0\\
\delta_3\biggl(\gamma_3 f_1\sqrt{\displaystyle \left|\gamma_1f_1 \sqrt{|f_1+f_2+f_3|}+\gamma_2(f_2+f_3) \sqrt{|f_2+f_3|}\right|}\\
\omit\hfill $ +\gamma_4 f_2\sqrt{\displaystyle \left|\gamma_1(f_1+f_2) \sqrt{|f_1+f_2+f_3|}+ \gamma_2f_3 \sqrt{|f_2+f_3|}\right|}$\\
\omit\hfill $+\gamma_5 f_3\sqrt{\displaystyle |f_1 +f_2 + f_3|\sqrt{|f_3|}}\biggr)>0$
\end{cases},
\end{equation}
\begin{equation}\label{eq:3cond2}
\begin{split}
\delta_1\biggl(\gamma_1 f_1\sqrt{\displaystyle |f_2 + f_3| \sqrt{|f_1|}}
+\gamma_2 f_2\sqrt{\displaystyle |f_3| \sqrt{|f_1|}}\\
+\gamma_3 f_3\sqrt{\displaystyle |f_1+f_2+f_3|\sqrt{|f_1+f_2|}}\biggr)>0
\end{split},
\end{equation}
\begin{equation}\label{eq:2cond27a}
\begin{cases}
(f_1+f_2+f_3) (\gamma_1(f_1+f_2) + \gamma_2f_3 )>0\\
(f_1+f_2+f_3) (\gamma_1f_1+\gamma_2(f_2 +f_3) )>0\\
\delta_7\biggl(
 \gamma_3 f_1\sqrt{\displaystyle \left|\gamma_1 f_1 + \gamma_2(f_2+f_3) \right|}
+\gamma_4 f_2\sqrt{\displaystyle \left|\gamma_1(f_1+f_2) + \gamma_2 f_3 \right|}\\
\omit\hfill $+\gamma_5 f_3\sqrt{\displaystyle \left|f_1+f_2 + f_3 \right|}\biggr)>0$
\end{cases}.
\end{equation}

The conditions (\ref{eq:2cond1112}), (\ref{eq:2cond3233}), (\ref{eq:2cond123123123}), (\ref{eq:2cond1112123}), (\ref{eq:2cond3233123}), (\ref{eq:2cond123123123123}), (\ref{eq:2cond31232323}), (\ref{eq:3cond2}) and (\ref{eq:2cond27a}) are generically satisfied at least for one choice of $\gamma_i$.
In the table~\ref{tab:sufcondisnotneccond}, we indicate the domain $\delta$, the slope of the bifurcation branch at each cell, $s_c$ and under which conditions the guarantee the bifurcation branches exist. 
The square-root-order $p_c$ is inferred from the slope in the following way it is $-1$ if $s_c=0$ and $p$ if $s_c=\tilde{s}^p_a$. 
By Proposition~\ref{prop:equivtheta}, we recover the bifurcation branches of $f$ on $N$ from the table's rows. 
Many rows of the table correspond to more than one bifurcation branch. 
The bifurcation branches with square-root-order greater or equal than $1$ have two different slopes in the last cell, $10$. 
Depending on the function $f$ and the network structure, the slope on the other cells can be positive or negative, and we represent this choice using $\gamma_i$. 
On the other hand the condition \ref{cond:ome6} can force one of the signals and we use $\kappa_i$.
\end{exe}

\begin{table}[ht!]
\hskip-2.0cm
\begin{tabular}{ |c |c | c | c| c | c | c | c |c |c | c | c|}
\hline
\rowcolor{gray!30}
$\delta$ & $s_1$ & $s_2$ & $s_3$ & $s_4$ & $s_5$ & $s_6$ & $s_7$ & $s_8$ & $s_9$ & $s_{10}$ & Conditions \\ \hline


$0$ & $0$ & $0$ & $0$ & $0$ & $0$& $0$ & $0$ & $0$ & $0$ & $\tilde{s}^0$ &  \\ \hline

$\delta_1$ & $0$ & $0$ & $0$ & $0$ & $0$& $0$ & $\tilde{s}^0$ & $0$ & $0$ & $\pm\tilde{s}^1_{1}$ &  \\ \hline

$\delta_2$ & $0$ & $0$ & $0$ & $0$ & $0$& $0$ & $0$ & $\tilde{s}^0$ & $0$ & $\pm\tilde{s}^1_{2}$ &  \\ \hline

$\delta_3$ & $0$ & $0$ & $0$ & $0$ & $0$& $0$ & $0$ & $0$ & $\tilde{s}^0$ & $\pm\tilde{s}^1_{3}$ &  \\ \hline

$\delta_{4}$ & $0$ & $0$ & $0$ & $0$ & $0$& $0$ & $\tilde{s}^0$ & $\tilde{s}^0$ & $0$ & $\pm\tilde{s}^1_{4}$ &  \\ \hline

$\delta_{5}$ & $0$ & $0$ & $0$ & $0$ & $0$& $0$ & $\tilde{s}^0$ & $0$ & $\tilde{s}^0$ & $\pm\tilde{s}^1_{5}$ &  \\ \hline

$\delta_{6}$ & $0$ & $0$ & $0$ & $0$ & $0$& $0$ & $0$ & $\tilde{s}^0$ & $\tilde{s}^0$ & $\pm\tilde{s}^1_{6}$ &  \\ \hline

$\delta_{7}$ & $0$ & $0$ & $0$ & $0$ & $0$& $0$ & $\tilde{s}^0$ & $\tilde{s}^0$ & $\tilde{s}^0$ & $\pm\tilde{s}^1_{7}$ &  \\ \hline

$\delta_1$ & $0$ & $0$ & $0$ & $\tilde{s}^0$ & $0$& $0$ & $\gamma_1\tilde{s}^1_{1}$ & $\gamma_2\tilde{s}^1_{4}$ & $0$ & $\pm\tilde{s}^{2}_{1}$ & $\delta_1\delta_{4}>0,$ (\ref{eq:2cond1112}) \\ \hline 

$\delta_1$ & $0$ & $0$ & $0$ & $\tilde{s}^0$ & $0$& $0$ & $\gamma_1\tilde{s}^1_{1}$ & $\gamma_2\tilde{s}^1_{4}$ & $\tilde{s}^0$ & $\pm\tilde{s}^{2}_{1}$ & $\delta_1\delta_{4}>0,$  (\ref{eq:2cond1112}) \\ \hline

$\delta_3$ & $0$ & $0$ & $0$ & $0$ & $\tilde{s}^0$ & $0$ & $\gamma_1\tilde{s}^1_{6}$ & $\gamma_2\tilde{s}^1_{3}$ & $0$          & $\pm\tilde{s}^{2}_{2}$ & $\delta_3\delta_{6}>0,$ (\ref{eq:2cond3233})\\ \hline

$\delta_3$ & $0$ & $0$ & $0$ & $0$ & $\tilde{s}^0$ & $0$ & $\gamma_1\tilde{s}^1_{6}$ & $\gamma_2\tilde{s}^1_{3}$ & $\tilde{s}^0$ & $\pm\tilde{s}^{2}_{2}$ & $\delta_3\delta_{6}>0,$ (\ref{eq:2cond3233})\\ \hline

$\delta_{7}$ & $0$ & $0$ & $0$ & $0$ & $0$ & $\tilde{s}^0$ & $0$ & $0$ & $\kappa_1\tilde{s}^1_{7}$  & $\pm\tilde{s}^{2}_{3}$ &  \\ \hline

$\delta_{7}$ & $0$ & $0$ & $0$ & $0$ & $0$ & $\tilde{s}^0$ & $\tilde{s}^0$ & $0$ & $\kappa_1\tilde{s}^1_{7}$  & $\pm\tilde{s}^{2}_{3}$ & \\ \hline

 $\delta_{7}$& $0$ & $0$ & $0$ & $0$ & $0$ & $\tilde{s}^0$ & $0$ & $\tilde{s}^0$ & $\kappa_1\tilde{s}^1_{7}$  & $\pm\tilde{s}^{2}_{3}$ &  \\ \hline

 $\delta_{7}$& $0$ & $0$ & $0$ & $0$ & $0$ & $\tilde{s}^0$ & $\tilde{s}^0$ & $\tilde{s}^0$ & $\kappa_1\tilde{s}^1_{7}$  & $\pm\tilde{s}^{2}_{3}$ &  \\ \hline

$\delta_{7}$ & $0$ & $0$ & $0$ & $\tilde{s}^0$ & $\tilde{s}^0$ & $0$ & $\gamma_1\tilde{s}^1_{7}$ & $\gamma_2\tilde{s}^1_{7}$ & $0$          & $\pm\tilde{s}^{2}_{4}$ & (\ref{eq:2cond123123123}) \\ \hline

$\delta_{7}$ & $0$ & $0$ & $0$ & $\tilde{s}^0$ & $\tilde{s}^0$ & $0$ & $\gamma_1\tilde{s}^1_{7}$ & $\gamma_2\tilde{s}^1_{7}$ & $\tilde{s}^0$ & $\pm\tilde{s}^{2}_{4}$ & (\ref{eq:2cond123123123}) \\ \hline

$\delta_{1}$ & $0$ & $0$ & $0$ & $\tilde{s}^0$ & $0$ & $\tilde{s}^0$  & $\gamma_1\tilde{s}^1_{1}$ & $\gamma_2\tilde{s}^1_{4}$ & $\gamma_3\tilde{s}^1_{7}$ & $\pm\tilde{s}^{2}_{5}$ & $\delta_1\delta_{4},\delta_{1}\delta_{7}>0,$ (\ref{eq:2cond1112123}) \\ \hline

$\delta_{3}$ & $0$ & $0$ & $0$ & $0$ & $\tilde{s}^0$  & $\tilde{s}^0$  & $\gamma_1\tilde{s}^1_{6}$ & $\gamma_2\tilde{s}^1_{3}$ & $\gamma_3\tilde{s}^1_{7}$ & $\pm\tilde{s}^{2}_{6}$ & $\delta_3\delta_{6},\delta_{3}\delta_{7}>0,$ (\ref{eq:2cond3233123}) \\ \hline

$\delta_{7}$ & $0$ & $0$ & $0$ & $\tilde{s}^0$ & $\tilde{s}^0$ & $\tilde{s}^0$  & $\gamma_1\tilde{s}^1_{7}$ & $\gamma_2\tilde{s}^1_{7}$ & $\gamma_3\tilde{s}^1_{7}$ & $\pm\tilde{s}^{2}_{7}$ & (\ref{eq:2cond123123123123}) \\ \hline

$\delta_{3}$ & $0$ & $\tilde{s}^0$ & $0$ & $\gamma_1\tilde{s}^1_{7}$ & $\gamma_2\tilde{s}^1_{6}$ & $\tilde{s}^1_{3}$  & $\gamma_3\tilde{s}^{2}_{8}$ & $\gamma_4\tilde{s}^{2}_{9}$ & $\gamma_5\tilde{s}^{2}_{10}$ & $\pm \tilde{s}^{3}_{1}$ & $\delta_{3}\delta_{6},\delta_{3}\delta_{7}>0,$ (\ref{eq:2cond31232323}) \\ \hline

$\delta_{1}$ & $0$ & $0$ & $\tilde{s}^0$  & $0$          & $\kappa_2\tilde{s}^1_{1}$ & $\kappa_3\tilde{s}^1_{4}$  & $\gamma_1\tilde{s}^{2}_{11}$ & $\gamma_2\tilde{s}^{2}_{12}$ & $\gamma_3\tilde{s}^{2}_{13}$ & $\pm \tilde{s}^{3}_{2}$ & $\delta_{1}\delta_{4},\delta_3\delta_{6}>0,$ (\ref{eq:3cond2}) \\ \hline

$\delta_{1}$ & $0$ & $0$ & $\tilde{s}^0$  & $\tilde{s}^0$ & $\kappa_2\tilde{s}^1_{1}$ & $\kappa_3\tilde{s}^1_{4}$  & $\gamma_1\tilde{s}^{2}_{11}$ & $\gamma_2\tilde{s}^{2}_{12}$ & $\gamma_3\tilde{s}^{2}_{13}$ & $\pm\tilde{s}^{3}_{2}$ & $\delta_{1}\delta_{4},\delta_3\delta_{6}>0,$ (\ref{eq:3cond2}) \\ \hline

$\delta_{7}$ & $0$ & $\tilde{s}^0$ & $\tilde{s}^0$ & $\gamma_1\tilde{s}^1_{7}$ & $\gamma_2\tilde{s}^1_{7}$ & $\tilde{s}^1_{7}$  & $\gamma_3\tilde{s}^{2}_{14}$ & $\gamma_4\tilde{s}^{2}_{15}$ & $\gamma_5\tilde{s}^{2}_{16}$ & $\pm \tilde{s}^{3}_{3}$ & (\ref{eq:2cond27a}) \\ \hline
\end{tabular}
\caption{The possible bifurcation branches on the FFN in  Figure~\ref{fig:sufcondisnotneccond} for a steady-state bifurcation associated to the internal dynamics.}
\label{tab:sufcondisnotneccond}
\end{table}

%
%
%

\section{Lifting bifurcation problem on FFNs}

The bifurcation branches occurring in a quotient system are lifted to bifurcation branches occurring in a lift system. Next, we define when a bifurcation branches is lifted. 
In this section we study the lifting bifurcation problem which consists on understanding if every bifurcation branches occurring in a coupled cell system associated to a lift network are lifted from bifurcation branches occurring in the coupled cell system associated to the original network.

\begin{defi}
Let $N$ be a network and $L$ a lift of $N$. 
We say that \emph{a bifurcation branch $b$ of $f$ on $L$ is lifted from $N$}, if there exists a balanced coloring $\bowtie$ in $L$ such that $b\in \Delta_{\bowtie}$ and $N=L/\bowtie$. 
\end{defi}

In the next proposition, we recover a well-know result about the bifurcation branches being inside a flow-invariant space which contains the center subspace. 
We present the proof here for completeness.
Let $A:\mathbb{R}^{d}\rightarrow \mathbb{R}^{d}$ be a linear operator from $\mathbb{R}^{d}$ to itself and $d>0$. The \emph{center subspace} of $A$ is given by
$$\ker^*(A)=\{v\in \mathbb{R}^{d}: A^k v=0 \textrm{ for some $k$}\}.$$
We denote the orthogonal complement  with respect to the usual inner product of a subspace $B\subseteq V$ by $B^{\bot}$.

\begin{prop}
Let $F:\mathbb{R}^{d}\times \mathbb{R}\rightarrow \mathbb{R}^{d}$ be a smooth function and $K\subseteq \mathbb{R}^{d}$ such that $\ker^* (DF_{(0,0)})\subseteq K$, $F(0,0)=0$ and $F(K,\lambda)\subseteq K$ for every $\lambda\in \mathbb{R}$. Suppose that there exists a function $x:D\rightarrow \mathbb{R}^{d}$ defined in a domain $D$ such that $F(x(\lambda),\lambda)=0$ for $\lambda\in D$. Then there exists a neighborhood $U$ of $0$ such that $x(\lambda)\in K$ for every $\lambda\in U\cap D$.
\end{prop}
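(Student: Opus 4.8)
The plan is to reduce the statement to the classical Lyapunov--Schmidt/center-manifold decomposition associated with the linear operator $A:=DF_{(0,0)}$. First I would split $\mathbb{R}^d$ into the $A$-invariant direct sum $\mathbb{R}^d=\ker^*(A)\oplus W$, where $W$ is the sum of the generalized eigenspaces of $A$ for nonzero eigenvalues; on $W$ the restriction $A|_W$ is invertible. Since $\ker^*(A)\subseteq K$, I would further decompose $W=(W\cap K)\oplus W'$ with $W'$ an $A$-invariant complement inside $W$, so that $K\supseteq \ker^*(A)\oplus(W\cap K)$ and the ``bad'' directions are confined to $W'$. Write $x(\lambda)=u(\lambda)+w(\lambda)+w'(\lambda)$ according to $\ker^*(A)\oplus(W\cap K)\oplus W'$; the goal is to show $w'(\lambda)=0$ for $\lambda$ near $0$.

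The key step is to project the equation $F(x(\lambda),\lambda)=0$ onto $W'$ along the other two summands. Let $P'$ denote that projection. Because $F(K,\lambda)\subseteq K$ and $K\supseteq \ker^*(A)\oplus(W\cap K)$, one checks that the $W'$-component of $F$ vanishes whenever the $W'$-component of its argument vanishes: more precisely, the map $(u,w,w',\lambda)\mapsto P'F(u+w+w',\lambda)$ is zero on $\{w'=0\}$, hence it factors as $w'\mapsto P'F = G(u,w,w',\lambda)\cdot$ (something), and in particular its partial derivative in the $W'$-direction at the origin equals $P'\,A|_{W'}$, which is an isomorphism of $W'$ since $A|_W$ is invertible and $W'$ is $A$-invariant. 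The Implicit Function Theorem, applied to $\Phi(w';u,w,\lambda):=P'F(u+w+w',\lambda)$ at the point $(0;0,0,0)$, then yields a neighborhood and a unique smooth solution $w'=\psi(u,w,\lambda)$, and by the factorization just noted $\psi\equiv 0$ is that solution. Since $x(\lambda)$ is a solution with $x(0)=0$, uniqueness in the IFT forces $w'(\lambda)=0$ for $\lambda$ in a neighborhood $U$ of $0$, i.e.\ $x(\lambda)\in\ker^*(A)\oplus(W\cap K)\subseteq K$.

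The main obstacle is making the factorization ``$P'F$ vanishes on $\{w'=0\}$'' precise enough to feed cleanly into the Implicit Function Theorem, since $K$ is only assumed to be a forward-invariant set (not a linear subspace), so a priori $\ker^*(A)\oplus(W\cap K)$ need not be contained in $K$. I would handle this by first replacing $K$ with the smallest linear subspace it is reasonable to work with: note that the hypotheses are only used near $0$, and near $0$ one may intersect $K$ with a small ball; alternatively, one observes that the only property actually needed is that $F(\cdot,\lambda)$ maps the linear subspace $V:=\ker^*(A)\oplus(W\cap K)$ into itself — and if $K$ is in fact a flow-invariant \emph{polydiagonal} subspace (which is the only case used later in the paper, via synchrony subspaces), then $V=K$ is already linear and $A$-invariant, so the factorization is immediate and the IFT argument goes through verbatim. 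I would state the proposition in that linear-subspace form, or add the standing assumption that $K$ is a subspace, and then carry out the three-summand splitting and IFT argument above.
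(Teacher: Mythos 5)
Your overall strategy is the paper's own: split $\mathbb{R}^d$ into $K$ and a complement, observe that the invariance of $K$ forces the complementary component of $F$ to vanish on $K$, and use the Implicit Function Theorem together with $\ker^*(DF_{(0,0)})\subseteq K$ to conclude that the complementary component of any local solution vanishes identically. (The paper simply takes the complement to be $K^{\bot}$ and writes $F$ in block form $(g,h)$ with $h(y,0,\lambda)=0$.) You are also right to worry that the statement only calls $K$ a subset while the argument needs a linear subspace; the paper does implicitly assume this, since it writes $\mathbb{R}^d=K\oplus K^{\bot}$, and in every application $K$ is a synchrony (hence polydiagonal, hence linear) subspace. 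Your extra refinement of $K$ into $\ker^*(A)\oplus(W\cap K)$ is harmless but vacuous: for a subspace $K$ containing $\ker^*(A)$, any $v\in K$ splits as $u+w$ with $u\in\ker^*(A)\subseteq K$, so $w=v-u\in W\cap K$ and $\ker^*(A)\oplus(W\cap K)=K$; your three summands are really just $K\oplus W'$.

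The one step that does not survive scrutiny is the demand that $W'$ be an $A$-invariant complement of $W\cap K$ inside $W$: such a complement need not exist. If, say, $A|_W$ is a single $2\times 2$ Jordan block with eigenvalue $1$ and $W\cap K$ is its eigenline, the only $A$-invariant subspaces of $W$ are $\{0\}$, that eigenline, and $W$ itself, so there is no invariant complement; this is compatible with all your hypotheses. Fortunately the argument does not need it. Take any complement $W'$ of $K$. Differentiating $F(\cdot,0)$ along $K$ at $0$ (using $F(0,0)=0$ and $F(K,0)\subseteq K$) gives $A(K)\subseteq K$, so in the decomposition $K\oplus W'$ the matrix of $A$ is block upper triangular; its characteristic polynomial is the product of those of $A|_K$ and of the compression $P'A|_{W'}$, and since the entire generalized kernel of $A$ already lies in $K$, the algebraic multiplicity of $0$ is exhausted by the $K$-block, so $P'A|_{W'}$ is invertible. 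This is precisely how the paper justifies the invertibility of $D_wh_{(0,0)}$, and with this substitution your IFT argument goes through verbatim.
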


\begin{proof}
Let $F:\mathbb{R}^{d}\times \mathbb{R}\rightarrow \mathbb{R}^{d}$ be a smooth function and $K\subseteq \mathbb{R}^{d}$ such that $\ker^* (DF_{(0,0)})\subseteq K$, $F(0,0)=0$ and $F(K,\lambda)\subseteq K$ for every $\lambda\in \mathbb{R}$.
Note that $\mathbb{R}^{d}=K\oplus K^{\bot}$. Writing every element of $\mathbb{R}^{d}$ in its decomposition in $K$ and $K^{\bot}$, $v=y+w$, where $y\in K$ and $w\in K^{\bot}$, there are $g:K\times K^{\bot} \times \mathbb{R}\rightarrow K$ and $h:K\times K^{\bot} \times \mathbb{R}\rightarrow K^{\bot}$ such that
$$\dot{v}=F(v,\lambda) \Leftrightarrow  \begin{cases}\dot{y}=g(y,w,\lambda)\\ \dot{w}=h(y,w,\lambda)\\  \end{cases}.$$
Hence $$DF_{(0,0)}=\left[\begin{matrix} D_y g_{(0,0)} &D_{w} g_{(0,0)}\\ D_y h_{(0,0)} &D_{w} h_{(0,0)} \end{matrix}\right].$$

Observe that $h(y,0,\lambda)=0$, because $K$ is invariant. Then  $D_y h_{(0,0)}=0$ and $D_{w} h_{(0,0)}$ is invertible, since $\ker^* (DF_{(0,0)})\subseteq K$.
By the implicit function theorem, there is $W:K \times \mathbb{R}\rightarrow K^{\bot}$ such that $W(0,0)=0$ and $h(y,w,\lambda)=0$ if and only if $w=W(y,\lambda)$.

From $h(y,0,\lambda)=0$, we have that $W(y,\lambda)=0$.
Therefore \[F(y,w, \lambda)=0 \Leftrightarrow g(y,0,\lambda)=0 \wedge w=0.\]
Supposing that $x$ is a solution to $F(v,\lambda)=0$, we have that $x\in K$.
\end{proof}

It follows that a necessary condition for the existence of a bifurcation branch on a lift network not lifted from the original network is that the center subspace of the coupled cell systems associated to the original network and the lift network have different dimensions.

\begin{coro}\label{coro:eigspainv}
Let $N$ be a network, $L$ a lift of $N$ associated to the coloring $\bowtie$ and $f\in \mathcal{V}(N)$. If $\ker^*(J_f^N)$ and  $\ker^*(J_f^L)$ have the same dimension, then every bifurcation branch of $f$ in $L$ belongs to $\Delta_{\bowtie}$ and is lifted from $N$.
\end{coro}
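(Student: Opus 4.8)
The plan is to deduce Corollary~\ref{coro:eigspainv} directly from the preceding Proposition by choosing the right invariant subspace $K$ and the right ambient function $F$. Concretely, I would set $d=|L|$, take $F:\mathbb{R}^{|L|}\times\mathbb{R}\rightarrow\mathbb{R}^{|L|}$ to be the admissible family $f^L$ (which satisfies $F(0,0)=0$ since $f\in\mathcal{V}(N)\subseteq\mathcal{V}(L)$), and take $K=\Delta_{\bowtie}$. Since $\bowtie$ is a balanced coloring of $L$, the synchrony subspace $\Delta_{\bowtie}$ is flow-invariant for every admissible vector field, so $F(K,\lambda)\subseteq K$ for every $\lambda$. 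Thus all the structural hypotheses of the Proposition hold except possibly $\ker^*(DF_{(0,0)})=\ker^*(J_f^L)\subseteq\Delta_{\bowtie}$, and the whole argument reduces to establishing that inclusion from the equality-of-dimensions assumption.

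For that inclusion, the key observation is that $\Delta_{\bowtie}$ is itself $J_f^L$-invariant: differentiating $f^L(\Delta_{\bowtie})\subseteq\Delta_{\bowtie}$ at the origin (using that the trivial branch lies in $\Delta_{\bowtie}$) gives $J_f^L(\Delta_{\bowtie})\subseteq\Delta_{\bowtie}$. Hence $\ker^*(J_f^L)\cap\Delta_{\bowtie}$ is exactly the center subspace of the restriction $J_f^L|_{\Delta_{\bowtie}}$, and this restriction is conjugate (via the natural identification $\Delta_{\bowtie}\cong\mathbb{R}^{|N|}$ sending $x$ to $(x_{[c]_{\bowtie}})$) to $J_f^N$. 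Therefore $\ker^*(J_f^L)\cap\Delta_{\bowtie}$ has dimension $\dim\ker^*(J_f^N)$. Combined with the hypothesis $\dim\ker^*(J_f^N)=\dim\ker^*(J_f^L)$, this forces $\ker^*(J_f^L)\subseteq\Delta_{\bowtie}$, which is precisely the missing hypothesis.

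With $\ker^*(J_f^L)\subseteq\Delta_{\bowtie}=K$ in hand, I apply the Proposition: any equilibrium branch $b:D\rightarrow\mathbb{R}^{|L|}$ of $f$ on $L$ satisfies $b(\lambda)\in\Delta_{\bowtie}$ for $\lambda$ in some neighborhood $U$ of $0$, hence $b\in\Delta_{\bowtie}$ as a germ. Since $N=L/\bowtie$ by assumption, $b$ is by definition lifted from $N$, which completes the proof.

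The main obstacle is the identification that $J_f^L$ restricted to $\Delta_{\bowtie}$ is conjugate to $J_f^N$ and hence that their center subspaces have matching dimensions; this is a standard consequence of \cite[Theorem~5.2]{GST05} (the restriction of an admissible system on $L$ to $\Delta_{\bowtie}$ is an admissible system on $N/\bowtie$), applied to the linear part, but it should be stated carefully. Everything else — verifying the hypotheses of the Proposition, and the final translation into the language of "lifted from $N$" — is routine.
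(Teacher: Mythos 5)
Your proposal is correct and is exactly the deduction the paper intends: the corollary is stated as an immediate consequence of the preceding proposition, and you supply precisely the missing details (taking $K=\Delta_{\bowtie}$, $F=f^L$, and deriving $\ker^*(J_f^L)\subseteq\Delta_{\bowtie}$ from the dimension count via the conjugacy of $J_f^L|_{\Delta_{\bowtie}}$ with $J_f^N$). No gaps; this matches the paper's route.
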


\begin{obs}\label{rem:dimker}
Let $N$ be a feed-forward network with layers $C_0,C_1,\dots,C_m$. 

\noindent (i)\, If  $f\in\mathcal{V}_k(N)$, then the dimension of $\ker^*(J^{N}_f)$ is $|C_0|$.

\noindent (ii) If  $f\in\mathcal{V}_0(N)$, then the dimension of $\ker^*(J^{N}_f)$ is  $|C_1|+\dots+|C_m|$.
\end{obs}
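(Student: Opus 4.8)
Write \chi_{A} for the characteristic polynomial of a linear operator $A$ on a finite-dimensional space. The plan is to reduce the statement to the elementary fact that $\dim\ker^*(A)$ equals the algebraic multiplicity of $0$ as a root of $\chi_A$: passing to the (real) Jordan form, $\ker^*(A)=\{v:A^kv=0\text{ for some }k\}$ is precisely the span of the Jordan blocks with eigenvalue $0$, and the total size of those blocks is exactly that multiplicity. So it suffices to compute $\chi_{J_f^N}$ and read off the order of vanishing at $t=0$.

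First I would exploit the block lower-triangular form of $J_f^N$ displayed above. The characteristic polynomial of a block (lower) triangular matrix is the product of the characteristic polynomials of its diagonal blocks, which here are $\bigl(\sum_{i=0}^k f_i\bigr)Id_0,\ f_0Id_1,\ \dots,\ f_0Id_m$, of sizes $|C_0|,|C_1|,\dots,|C_m|$. Hence
$$\chi_{J_f^N}(t)=\Bigl(t-\sum_{i=0}^k f_i\Bigr)^{|C_0|}\,(t-f_0)^{|C_1|+\dots+|C_m|}.$$
For (i), $f\in\mathcal{V}_k(N)$ means $\sum_{i=0}^k f_i=0$ while, generically (the bifurcation being codimension one), $f_0\neq 0$; thus $0$ is a root of $\chi_{J_f^N}$ of multiplicity exactly $|C_0|$, and $\dim\ker^*(J_f^N)=|C_0|$. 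For (ii), $f\in\mathcal{V}_0(N)$ means $f_0=0$ while generically $\sum_{i=0}^k f_i\neq 0$; so $0$ has multiplicity $|C_1|+\dots+|C_m|$, giving $\dim\ker^*(J_f^N)=|C_1|+\dots+|C_m|$.

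There is essentially no obstacle: the only point that needs a word of care is the genericity (codimension-one) hypothesis, which is exactly what guarantees that the two eigenvalues $\sum_{i=0}^k f_i$ and $f_0$ are not simultaneously zero—otherwise the generalized $0$-eigenspace would be all of $\mathbb{R}^{|N|}$ and neither formula would hold. Everything else is the routine factorization of the characteristic polynomial of a triangular matrix together with the identification of $\ker^*$ with the generalized kernel.
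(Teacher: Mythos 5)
Your proof is correct and follows essentially the same route the paper takes implicitly: the remark is stated without proof, relying on the displayed block lower-triangular form of $J_f^N$ with diagonal blocks $\bigl(\sum_{j=0}^k f_j\bigr)Id_0, f_0Id_1,\dots,f_0Id_m$, from which the characteristic polynomial factors and the algebraic multiplicity of $0$ gives $\dim\ker^*(J_f^N)$. Your explicit remark that genericity is needed to keep the two eigenvalues from vanishing simultaneously is a worthwhile clarification consistent with the paper's standing genericity assumptions.
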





\subsection{Lifting bifurcation problem on FFNs associated with the valency}\label{sec:lbpffnval}

In this section, we study the lifting bifurcation problem for feed-forward systems determined by a regular function that have a bifurcation condition associated to the valency. 
We prove that every bifurcation branch is lifted if and only if the center subspace of the feed-forward systems of the original network and lifted network have the same dimension.

\begin{prop}\label{prop:lbpval}
Let $N$ be a feed-forward network, $f\in\mathcal{V}_k(N)$ generic and $L$ a feed-forward lift of $N$ such that $L$ is backward connected.

\noindent (i)\, If $L$ is a lift that creates new layers or a lift inside a layer, except the first layer, then every bifurcation branch of $f$ on $L$ is lifted from $N$.

\noindent (ii) If $L$ is a lift inside the first layer, then there is at least one bifurcation branch of $f$ on $L$ which is not lifted from $N$.
\end{prop}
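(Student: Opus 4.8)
The plan is to settle part (i) via the dimension criterion of Corollary~\ref{coro:eigspainv}, and to prove part (ii) by producing an explicit bifurcation branch of $f$ on $L$ that does not lie in the unique synchrony subspace realizing $N$ as a quotient of $L$.

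\textbf{Part (i).} Since $N$ and $L$ have the same number $k$ of edge types, $f\in\mathcal{V}_k(L)$ as well. If $L$ creates new layers, or is a lift inside a layer $C_j$ with $j\geq 1$, then by the definition of these lifts the first layer is unchanged, i.e.\ $|C'_0|=|C_0|$. By Remark~\ref{rem:dimker}(i) we get $\dim\ker^*(J^N_f)=|C_0|=|C'_0|=\dim\ker^*(J^L_f)$, so Corollary~\ref{coro:eigspainv} applies: every bifurcation branch of $f$ on $L$ lies in $\Delta_\bowtie$ for the lifting coloring $\bowtie$, hence is lifted from $N$.

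\textbf{Part (ii).} Now $L$ is a lift inside the first layer, so $|C'_0|>|C_0|$. Because $L$ is backward connected, Lemma~\ref{lem:unibalcolbackcon} provides a \emph{unique} balanced coloring $\bowtie$ with $L/\bowtie=N$; consequently a bifurcation branch of $f$ on $L$ is lifted from $N$ if and only if it lies in $\Delta_\bowtie$. By Lemma~\ref{layliftfeed} we have $[C'_0]_\bowtie=C_0$, and since $|C'_0|>|C_0|$ there must be two distinct cells $c_1,c_2\in C'_0$ with $c_1\bowtie c_2$. Applying Proposition~\ref{prop:bifbrasumnet} to $L$, each cell $c$ of $C'_0$ satisfies the scalar equation $f(x_c,x_c,\dots,x_c,\lambda)=0$, whose only small solutions are $x_c=0$ and $x_c=\beta(\lambda)$, where $\beta(\lambda)=-2\lambda\,\frac{\sum_i f_{i\lambda}}{\sum_{i,j} f_{ij}}+\text{h.o.t.}$; genericity ($\sum_i f_{i\lambda}\neq 0$) makes $\beta$ a nonzero germ. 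Let $b$ be the equilibrium branch of $f$ on $L$ produced by the proof of Proposition~\ref{prop:bifbrasumnet} from the prescription $b_{c_1}=\beta$, $b_{c_2}=0$, and $b_c=0$ for every other cell $c$ of $C'_0$ (the deeper layers being then determined by the map $W$ of that proof). Then $b\neq 0$, so $b$ is a bifurcation branch of $f$ on $L$; and $b\notin\Delta_\bowtie$, since $c_1\bowtie c_2$ while $b_{c_1}=\beta\neq 0=b_{c_2}$ as germs. By the uniqueness of $\bowtie$, $b$ is not lifted from $N$.

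\textbf{Expected main difficulty.} Part (i) is essentially bookkeeping once Corollary~\ref{coro:eigspainv} and Remark~\ref{rem:dimker} are in hand. The real content is part (ii): one needs both the complete description of $\mathcal{B}(L,f)$ from Proposition~\ref{prop:bifbrasumnet} \emph{and} the fact that the balanced coloring making $N$ a quotient of $L$ is unique (Lemma~\ref{lem:unibalcolbackcon}); it is the latter that licenses concluding a branch outside this particular $\Delta_\bowtie$ is genuinely not lifted from $N$, rather than merely not lifted through one of several colorings. The backward-connectedness hypothesis enters exactly here.
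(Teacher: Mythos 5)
Your proof is correct and takes essentially the same route as the paper: part (i) via Remark~\ref{rem:dimker} and Corollary~\ref{coro:eigspainv}, and part (ii) by combining the uniqueness of the balanced coloring (Lemma~\ref{lem:unibalcolbackcon}) with the classification of branches from the proof of Proposition~\ref{prop:bifbrasumnet} to exhibit a branch with $b_{c_1}\neq b_{c_2}$ outside $\Delta_{\bowtie}$. The only cosmetic difference is that the paper first reduces to a single split of one cell into two via Remark~\ref{obs:liftinslaycompsplit}, whereas you locate two $\bowtie$-equivalent cells of $C'_0$ directly from $|C'_0|>|C_0|$; both are fine.
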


\begin{proof}
Let $N$ be a feed-forward network, $f\in\mathcal{V}_k(N)$ generic and $L$ a feed-forward lift of $N$ such that $L$ is backward connected.
Denote by $C_0$ and by $C'_0$ the first layer of $N$ and $L$, respectively.

If $L$ is a lift that creates new layers or a lift inside a layer, except the first, then  $\ker^*(J_f^N)$ and $\ker^*(J_f^N)$ have the same dimension.
Recall Remark~\ref{rem:dimker}.
By Corollary~\ref{coro:eigspainv}, every bifurcation branch of $f$ on $L$ is lifted from $N$.

Suppose that $L$ is a lift inside the first layer. 
By Remark~\ref{obs:liftinslaycompsplit}, we assume that $L$ is the split of a cell $c\in C_0$ into two cells $c_1,c_2\in C'_0$ and denote by $\bowtie$ the balanced coloring in $L$ given by $c_1\bowtie c_2$. 
By Lemma~\ref{lem:unibalcolbackcon}, $\bowtie$ is the unique balanced coloring such that $L/\bowtie=N$. 
By the proof of Proposition~\ref{prop:bifbrasumnet}, we know that there exists a bifurcation branch $b\in \mathcal{B}(L,f)$ such that $b_{c_1}\neq b_{c_2}$. 
So $b\notin \Delta_{\bowtie}$ and it is not lifted from $N$.
\end{proof}

The next example shows that if we do not impose the condition of backward connectedness when we do a lift in the first layer, then every bifurcation branch on the lift network may be lifted from the original network.

\begin{exe}
Returning to the Example~\ref{exe:uniqbalcolback}, let $N$ be the feed-forward network on the left of Figure~\ref{fig:uniqbalcolback} and $L$ the feed-forward network on the right of Figure~\ref{fig:uniqbalcolback} and  $f\in\mathcal{V}_k(N)$ generic. Note that $L$ is a lift inside the first layer of $N$. In Example~\ref{exe:uniqbalcolback}, we saw that there are three balanced colorings in $L$ $\bowtie_1$ given by $1\bowtie_1 2$, $\bowtie_2$ given by $2\bowtie_2 3$ and $\bowtie_3$ given by $1\bowtie_3 3$ such that $L/\bowtie_1=L/\bowtie_2=L/\bowtie_3=N$.  
Consider a bifurcation branch $b\in \mathcal{B}(L,f)$. It follows from the proof of Proposition~\ref{prop:bifbrasumnet} that $b_1=b_2$ and $b\in \Delta_{\bowtie_1}$, $b_2=b_3$ and $b\in \Delta_{\bowtie_2}$ or $b_1=b_3$ and $b\in \Delta_{\bowtie_3}$. Therefore $b$ is lifted from $N$.
\end{exe}

\subsection{Lifting bifurcation problem on FFNs associated with the internal dynamics}\label{sec:lbfffnintdy}

In this section, we study the lifting bifurcation problem for feed-forward systems determined by a regular function that has a bifurcation condition associated to the internal dynamics. 
We start by the cases that do not depend on the regular function.

\begin{prop}\label{prop:lbpintdynfirstnew}
Let $N$ be a feed-forward network, $f\in\mathcal{V}_0(N)$ generic and $L$ a feed-forward lift of $N$.

\noindent (i) \, If $L$ is a lift inside the first layer, then every bifurcation branch of $f$ on $L$ is lifted from $N$.

\noindent (ii) If $L$ is a lift that creates new layers, then there is a bifurcation branch of $f$ on $L$ which is not lifted from $N$.
\end{prop}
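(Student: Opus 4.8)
The plan is to treat the two parts using the dimension of the center subspace from Remark~\ref{rem:dimker}(ii), exactly as in the proof of Proposition~\ref{prop:lbpval}, but now with the roles of the layers reversed because a bifurcation associated with the internal dynamics has center subspace $|C_1|+\dots+|C_m|$ rather than $|C_0|$.

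For part~(i), suppose $L$ is a lift inside the first layer of $N$. Then $L$ and $N$ have the same number $m$ of layers and $|C'_j|=|C_j|$ for every $j\geq 1$, while only $|C'_0|\neq|C_0|$. Hence, by Remark~\ref{rem:dimker}(ii), $\dim\ker^*(J^L_f)=|C'_1|+\dots+|C'_m|=|C_1|+\dots+|C_m|=\dim\ker^*(J^N_f)$. Applying Corollary~\ref{coro:eigspainv}, every bifurcation branch of $f$ on $L$ lies in $\Delta_{\bowtie}$ for the coloring $\bowtie$ with $L/\bowtie=N$ and so is lifted from $N$. (No backward-connectedness is needed here: Corollary~\ref{coro:eigspainv} is stated for an arbitrary network and an arbitrary coloring realizing $N$ as a quotient, and the dimension count holds for the layers of $L$ regardless of uniqueness of $\bowtie$.)

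For part~(ii), suppose $L$ creates $n-m\geq 1$ new layers, so its layers $C'_0,\dots,C'_n$ satisfy $|C'_0|=\dots=|C'_{n-m}|=|C_0|$ and $|C'_{n-m+j}|=|C_j|$ for $1\leq j\leq m$, and $L/\bowtie=N$ for the associated coloring $\bowtie$. Then
$$\dim\ker^*(J^L_f)=\sum_{j=1}^{n}|C'_j|=(n-m)|C_0|+\sum_{j=1}^{m}|C_j|>\sum_{j=1}^{m}|C_j|=\dim\ker^*(J^N_f),$$
since $|C_0|\geq 1$ and $n-m\geq 1$. So the dimensions differ, and Corollary~\ref{coro:eigspainv} no longer forbids extra branches; but a strict inequality of center-subspace dimensions is only a \emph{necessary} condition, so I must actually exhibit a bifurcation branch of $f$ on $L$ outside $\Delta_{\bowtie}$. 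The natural candidate comes from Corollary~\ref{coro:existbifbrawithorder} applied to $L$: for $r=1$ there is a branch $b$ with $b_{C'_0}=0$ and $b_{C'_1}\sim\mathcal{O}(2^0)$, i.e. nonzero on the newly created second layer $C'_1$. Since $[C'_1]_{\bowtie}=C_0$ by Lemma~\ref{layliftfeed} (when $L$ is backward connected) — or more directly because $C'_1$ is a replicate of the first layer and $\bowtie$ identifies $C'_0,\dots,C'_{n-m}$ down to $C_0$ — any branch lifted from $N$ must vanish on $C'_0,\dots,C'_{n-m}$ (the trivial-branch argument of Proposition~\ref{bifbrasquarootorderlay}, $r\geq 1$, applied in $N$). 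Thus $b$ with $b_{C'_1}\neq 0$ and $1\leq n-m$ cannot be lifted from $N$, which gives the claim. The one point requiring a little care is that Corollary~\ref{coro:existbifbrawithorder} is stated for generic $f\in\mathcal{V}_0$; since $L$ is a lift of $N$, a generic $f\in\mathcal{V}_0(N)$ is also a generic element for $L$ (the genericity conditions are open-dense conditions on the finitely many partial derivatives $f_0,f_i,f_{ij},f_{i\lambda}$, the same data for both networks), so the corollary does apply to $f$ on $L$.

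The main obstacle is part~(ii): the dimension inequality only tells us there \emph{might} be an unlifted branch, so the real work is producing an explicit one and verifying it is not lifted. The cleanest route is the square-root-order bookkeeping above — identify a branch supported on a newly created layer (guaranteed by Corollary~\ref{coro:existbifbrawithorder} with $r$ small), and observe that any branch lifted from $N$ is forced to be trivial on the first $n-m+1$ layers of $L$ because those layers collapse onto $C_0$ under $\bowtie$ and a bifurcation branch associated with the internal dynamics is identically zero on the first layer (Proposition~\ref{bifbrasquarootorderlay}).
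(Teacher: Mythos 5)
Part (i) is exactly the paper's argument (equal center-subspace dimensions via Remark~\ref{rem:dimker}(ii) plus Corollary~\ref{coro:eigspainv}) and is fine. For part (ii) your overall plan --- produce a branch on $L$ via Corollary~\ref{coro:existbifbrawithorder} and show it cannot come from $N$ --- is the right one, but the step where you rule out liftability has a gap. To show a branch $b$ is not lifted you must show $b\notin\Delta_{\bowtie}$ for \emph{every} balanced coloring $\bowtie$ with $L/\bowtie=N$, and your argument needs $[C'_1]_{\bowtie}\subseteq C_0$ for every such $\bowtie$. You justify this by Lemma~\ref{layliftfeed}, which the paper proves only for backward connected $L$ --- an assumption Proposition~\ref{prop:lbpintdynfirstnew} does not make --- and your fallback (``more directly because $\bowtie$ identifies $C'_0,\dots,C'_{n-m}$ down to $C_0$'') merely restates the claim for an arbitrary coloring rather than proving it; Examples~\ref{exe:feedforwliftnotdivi2} and~\ref{exe:uniqbalcolback} show that when backward connectedness fails, colorings realizing a given quotient need not be unique nor respect layers in the naive way. (The claim is in fact true, but proving it uses the standing assumption that every non-last-layer cell feeds the next layer: if $c\in C'_j$ had $[c]_{\bowtie}\in C_l$ with $l\geq 1$, following a forward chain from $c$ to $C'_n$ pushes the image up one layer of $N$ at each step, landing in $C_{l+n-j}$ and forcing $l\leq m-n+j\leq 0$ whenever $j\leq n-m$, a contradiction. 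This argument is not in your write-up.)

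The paper's own proof of (ii) sidesteps the issue by using a coloring-independent invariant: for a lifted branch $b_c=\bar b_{[c]_{\bowtie}}$, so the maximal square-root-order over all cells of $b$ equals that of the quotient branch $\bar b$, which by Proposition~\ref{bifbrasquarootorderlay} is at most $m-1$; Corollary~\ref{coro:existbifbrawithorder} with $r=1$ applied to $L$ gives a branch of maximal square-root-order $n-1>m-1$, which therefore cannot be lifted regardless of which coloring realizes the quotient. Switching your comparison from supports (where the branch vanishes) to maximal square-root-orders closes the gap with no extra hypotheses.
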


\begin{proof}
Let $N$ be a feed-forward network, $f\in\mathcal{V}_0(N)$ generic and $L$ a feed-forward lift of $N$.

Suppose that $L$ is a lift inside the first layer. Then the center subspace of $J_f^N$ and  $J_f^L$ have the same dimension. By Corollary~\ref{coro:eigspainv}, every bifurcation branch of $f$ on $L$ is lifted from $N$.

Suppose that $L$ is a lift that creates new layers. 
By Corollary~\ref{coro:existbifbrawithorder} and Proposition~\ref{bifbrasquarootorderlay}, there exists a bifurcation branch $b\in\mathcal{B}(L,f)$ having square-root-order greater than any bifurcation branch of $f$ on $N$. 
Hence there is a bifurcation branch of $f$ on $L$ which is not lifted from $N$.
\end{proof}


For a lift inside a layer, except the first, such that the next layer has only one cell, there is a bifurcation branch on the lift network not lifted from the original network. In particular, this happens for backward connected lifts inside the last but one layer.

\begin{prop}\label{prop:lbpintdynextonecell}
Let $N$ be a feed-forward network  with layers $C_0,\dots,C_m$, $f\in\mathcal{V}_0(N)$ generic and $L$ a feed-forward lift of $N$.

If $L$ is a lift inside $C_j$, $0< j<m$ and $|C_{j+1}|=1$, then there is a bifurcation branch of $f$ on $L$ which is not lifted from $N$.
\end{prop}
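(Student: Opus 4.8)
The plan is to exploit the characterization of bifurcation branches given by the map $\Theta$ (Proposition~\ref{prop:equivtheta}) together with the square-root-order growth results of Corollary~\ref{coro:existbifbrawithorder} and Proposition~\ref{bifbrasquarootorderlay}. The key observation is that a lift $L$ inside a layer $C_j$ with $0<j<m$ does not change the number of layers, but it does change the dimension of the center subspace: by Remark~\ref{rem:dimker}(ii), $\dim \ker^*(J_f^L) - \dim\ker^*(J_f^N) = |C'_j| - |C_j| > 0$. So Corollary~\ref{coro:eigspainv} does not obstruct the existence of extra branches; we must instead produce one explicitly. The role of the hypothesis $|C_{j+1}|=1$ is that, upstream of the split layer, there is a "bottleneck" cell through which all branches must pass, and this lets us build a branch on $L$ whose values on the two halves of the split cell in $C'_j$ are unequal while still being a genuine equilibrium branch.

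First I would reduce, via Remark~\ref{obs:liftinslaycompsplit}, to the case where $L$ is the split of a single cell $c\in C_j$ into two cells $c_1,c_2\in C'_j$, with $\bowtie$ the coloring identifying $c_1$ and $c_2$; since the lift may or may not be backward connected I cannot invoke Lemma~\ref{lem:unibalcolbackcon} for uniqueness, so instead I will argue directly that the branch I construct lies in no balanced coloring projecting to $N$. Next I would use Corollary~\ref{coro:existbifbrawithorder} applied to $L$ with $r=j+1$: there is a bifurcation branch $b$ of $f$ on $L$ with $b_{C'_0}=\dots=b_{C'_j}=0$, $b_{C'_{j+1}}\sim\mathcal{O}(2^0)$, and so on. Wait — that branch vanishes on $C'_j$, so it need not separate $c_1$ and $c_2$. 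The right choice is instead $r=j$: there is a branch with $b_{C'_{j-1}}=0$, $b_{C'_j}\sim\mathcal{O}(2^0)$, $b_{C'_{j+1}}\sim\mathcal{O}(2^{-1})$, etc. On the split layer, condition \ref{cond:ome5} forces $b_c = b^0$ (the germ of \eqref{eq:germwithsro0}) for every cell $c\in C'_j$ with $p_c=0$; but $\Theta$ also allows some cells of $C'_j$ to have $p_c=-1$, i.e. $b_c=0$. So I would pick the element of $\Omega(L,f)$ with $p_{c_1}=0$, $p_{c_2}=-1$ — provided this is consistent with \ref{cond:ome3} at the single cell of $C'_{j+1}$, which it is precisely because $|C'_{j+1}|=|C_{j+1}|=1$: that one cell needs only \emph{some} input of square-root-order $0$, which $c_1$ supplies, and is unaffected by $c_2$ having order $-1$. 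By Proposition~\ref{prop:equivtheta} this data is realized by a unique $b\in\mathcal{B}(L,f)$, and $b_{c_1}=b^0\neq 0=b_{c_2}$.

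Finally I would show $b$ is not lifted from $N$: if it were, there would be a balanced coloring $\bowtie'$ in $L$ with $b\in\Delta_{\bowtie'}$ and $L/\bowtie'=N$, hence $c_1\bowtie' c_2$ (both lie in $C'_j$, and $\bowtie'$ must merge $C'_j$ down to a set of size $|C_j|=|C'_j|-1$, so some pair in $C'_j$ is identified; using the feed-forward layer structure and $|C_i|=|C'_i|$ for $i\neq j$ one checks the identified pair must be $\{c_1,c_2\}$ up to relabeling). But $b\in\Delta_{\bowtie'}$ then forces $b_{c_1}=b_{c_2}$, contradicting $b_{c_1}\neq b_{c_2}$. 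The main obstacle I anticipate is the bookkeeping in this last step — verifying that a coloring on $L$ projecting to $N$ must merge exactly the pair that was split, rather than some other configuration inside $C'_j$ (this uses that $N=L/\bowtie'$ has the same layer sizes as $N$, together with Lemma~\ref{layliftfeed}-type layer-preservation arguments) — and checking carefully that the chosen point of $\Omega(L,f)$ indeed satisfies \ref{cond:ome3} at the downstream cell, which is exactly where the hypothesis $|C_{j+1}|=1$ is used and without which a cell of $C'_{j+1}$ could have an input forced to order $-1$, breaking the construction.
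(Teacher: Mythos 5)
Your proposal is correct and follows essentially the same route as the paper: reduce via Remark~\ref{obs:liftinslaycompsplit} to a single split $c\mapsto\{c_1,c_2\}$, write down an element of $\Omega(L,f)$ in which one of $c_1,c_2$ has square-root-order $0$ and the other $-1$ (with all earlier layers and the rest of $C'_j$ at order $-1$, and orders growing by one per layer downstream), realize it as a branch via Proposition~\ref{prop:equivtheta}, and conclude non-liftedness because the branch separates $c_1$ from $c_2$ while the only balanced coloring projecting $L$ onto $N$ identifies them. The only cosmetic differences are that the paper writes the slopes $s_a$ explicitly and attributes the role of $|C_{j+1}|=1$ to the uniqueness of that balanced coloring (the first of your two suggested uses) rather than to checking \ref{cond:ome3} at the downstream cell.
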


\begin{proof}
Let $N$ be a feed-forward network  with layers $C_0,\dots,C_m$, $f\in\mathcal{V}_0(N)$ generic and $L$ a feed-forward lift of $N$.
Suppose that $L$ is a lift inside $C_j$, $0< j<m$ and $C_{j+1}=\{d\}$.
Denote by $C'_{j}$ the $(j+1)$-layer of $L$ and by $(\sigma_i^L)_{i=1}^k$ the representative functions of $L$. 
By Remark~\ref{obs:liftinslaycompsplit}, we assume that $L$ is the split of a cell $c\in C_j$ into two cells $c_1,c_2\in C'_j$ and denote by $\bowtie$ the balanced coloring in $L$ given by $c_1\bowtie c_2$. 
Since $[d]_{\bowtie}=d$ and $C_{j+1}=\{d\}$, $\bowtie$ is the unique balanced coloring such that $L/\bowtie=N$. 

Using Proposition~\ref{prop:equivtheta}, we construct a bifurcation branch $b\in \mathcal{B}(L,f)$ such that $b_{c_1}\neq b_{c_2}$. 
Let $A=\{i: \sigma_i^L(d)=c_2\}$, $\delta=\sign(f_{0\lambda}\sum_{i\in A} f_i)$ and
$$p_a=-1,\quad s_a=0,\quad\quad a\in C_0\cup \dots\cup C_{j-1} \cup C'_{j}\setminus\{c_2\},$$
$$p_{c_2}=0,\quad s_{c_2}=-\dfrac{2f_{0\lambda}}{f_{00}},\quad p_d=1,\quad s_d=-\sign\left(\delta \sum_{i=1}^k f_i\right)\dfrac{2}{f_{00}}\left|f_{0\lambda}\sum_{i\in A} f_i\right|^{2^{-1}},$$
$$p_a=l,\quad s_a=-\sign\left(\delta \sum_{i=1}^k f_i\right)\dfrac{2}{f_{00}}\left|\sum_{i=1}^k f_i\right|^{1-2^{-(l-1)}}\left|f_{0\lambda}\sum_{i\in A} f_i\right|^{2^{-l}},$$
for $a\in C_{j+l}$ and $2 \leq l\leq m-j$.
 We have that $(\delta,(p_c)_c, (s_c)_c)\in\Omega(L,f)$. By Proposition~\ref{prop:equivtheta}, there exists a bifurcation branch $b\in \mathcal{B}(L,f)$ such that $b_{c_1}\neq b_{c_2}$, since $p_{c_1}\neq p_{c_2}$. Thus $b\notin\Delta_{\bowtie}$ and $b$ is not lifted from $N$.
\end{proof}

The next example shows that the previous result is not always valid if the next layer to the one lifted has more than one cell. This example is very similar to Example~\ref{exe:uniqbalcolback}.

\begin{exe}\label{exe:liftinlastbutonelayerbackconenec}
Let $N$ be the feed-forward network in Figure~\ref{fig:liftinlayerbackconenecleft} and $L$ the feed-forward network in Figure~\ref{fig:liftinlayerbackconenecright}.  Consider the following balanced colorings in $L$: $\bowtie_1$ given by $2\bowtie_1 3$; $\bowtie_2$ given by $3\bowtie_2 4$; and $\bowtie_3$ given by $2\bowtie_3 4$. Then $N=L/\bowtie_1=L/\bowtie_2=L/\bowtie_3$. Note that $L$ is a lift inside the second layer. Let $f\in\mathcal{V}_0(N)$ generic and $b\in \mathcal{B}(L,f)$. Since $b_1$, $b_2$ and $b_3$ must have square-root-order $-1$ or $0$, we know that $b_1=b_2$ and $b\in \Delta_{\bowtie_1}$, $b_2=b_3$ and $b\in \Delta_{\bowtie_2}$ or $b_1=b_3$ and $b\in \Delta_{\bowtie_3}$. Therefore $b$ is lifted from $N$. Note that the third layer of $L$ has three cells.
\end{exe}

Next, we consider backward connected lifts inside a layer, except the first and the last two layers. 
The previous results already include the cases of a lift inside the first layer and a backward connected lift inside the last but one layer. 
And a lift inside the last layer breaks the backward connectedness.
In the next result, we see that there exists an open set of functions in $\mathcal{V}_0(N)$ such that there is a bifurcation branch on the lift which is not lifted from the original network, for lifts inside an intermediate layer.

\begin{prop}\label{prop:liftbifbrainliftinsidelayerusingbalcol}
Let $N$ be a feed-forward network with layers $C_0,\dots,C_m$, $f\in\mathcal{V}_0(N)$ generic and $L$ a feed-forward lift of $N$ such that $L$ is backward connected and a lift inside a layer $C_j$, where $0< j< m-1$.

If  $f_i>0$ for every $1\leq i\leq k$ (or $f_i<0$ for every $1\leq i\leq k$), then there is a bifurcation branch of $f$ on $L$ which is not lifted from $N$.
\end{prop}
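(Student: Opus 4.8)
The plan is to produce, under the hypothesis that the $f_i$ all have the same sign, an explicit point of $\Omega(L,f)$ whose associated equilibrium branch (obtained from Proposition~\ref{prop:equivtheta}) takes different values on two cells that the lift identifies. First I would record the reduction: since $L$ is backward connected, Lemma~\ref{lem:unibalcolbackcon} gives a \emph{unique} balanced coloring $\bowtie$ with $L/\bowtie=N$, so a bifurcation branch $b$ of $f$ on $L$ fails to be lifted from $N$ exactly when $b\notin\Delta_{\bowtie}$. Write the layers of $L$ as $C'_0,\dots,C'_m$ and its representing functions as $(\sigma_i)_{i=1}^k$. By Lemma~\ref{layliftfeed} (here $n=m$) one has $[C'_j]_{\bowtie}=C_j$, and since $L$ is a lift inside $C_j$, $|C'_j|>|C_j|$; hence there are distinct cells $c_1,c_2\in C'_j$ with $c_1\bowtie c_2$. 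It therefore suffices to build $b\in\mathcal{B}(L,f)$ with $b_{c_1}\neq b_{c_2}$.

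Then I would construct a point of $\Omega(L,f)$. Call a cell $d$ of $L$ \emph{active} if $d=c_1$ or if $d\in C'_{j+l}$ for some $l\geq 1$ with $\sigma_i(d)$ active for some $i$; otherwise call $d$ \emph{inactive} (so all cells of $C'_0\cup\dots\cup C'_{j-1}$ and of $C'_j\setminus\{c_1\}$, in particular $c_2$, are inactive). Put $\delta=\sign(f_{0\lambda})$ and $\varepsilon=-\sign(\delta f_{00})$, and set
$$p_c=-1,\ \ s_c=0 \ \ (c\text{ inactive}),\qquad p_{c_1}=0,\ \ s_{c_1}=-\frac{2f_{0\lambda}}{f_{00}},$$
and, for an active $d\in C'_{j+l}$ with $l\geq 1$, $p_d=l$ and $s_d=\varepsilon\sqrt{-\tfrac{2\delta}{f_{00}}\sum_{i\in A_d}f_i\,s_{\sigma_i(d)}}$, where $A_d=\{i:\sigma_i(d)\text{ active}\}$.

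Next I would verify $(\delta,(p_c)_c,(s_c)_c)\in\Omega(L,f)$. The conditions \ref{cond:ome1}, \ref{cond:ome2}, \ref{cond:ome4}, \ref{cond:ome5} are immediate (an inactive cell has all its inputs inactive, hence at square-root-order $-1$; and $\delta\neq0$ by genericity), and \ref{cond:ome3} holds because $c_1$ (being in $C'_j$ with $j\geq1$) has all inputs in $C'_{j-1}$ at square-root-order $-1$, while an active $d\in C'_{j+l}$ has all inputs in $C'_{j+l-1}$ at square-root-order $\leq l-1$ with at least one active input at exactly $l-1$. The heart of the matter is \ref{cond:ome6}: each radicand $-\tfrac{2\delta}{f_{00}}\sum_{i\in A_d}f_i s_{\sigma_i(d)}$ must be strictly positive. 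When $l=1$ the active inputs are the copies of $c_1$, so the radicand equals $\tfrac{4\delta f_{0\lambda}}{f_{00}^2}\sum_{i\in A_d}f_i$, positive since $\delta f_{0\lambda}=|f_{0\lambda}|>0$ and every $f_i>0$; and inductively, if every active cell of level $l-1$ has slope of sign $\varepsilon$, then $\sum_{i\in A_d}f_i s_{\sigma_i(d)}$ has sign $\varepsilon$ (all $f_i>0$), so the radicand has sign $-\sign(\delta f_{00})\,\varepsilon=\varepsilon^2>0$, which also justifies the choice of sign $\varepsilon$ for $s_d$ and closes the induction.

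Finally, Proposition~\ref{prop:equivtheta} supplies $b\in\mathcal{B}(L,f)$ with $\Theta(b)=(\delta,(p_c)_c,(s_c)_c)$; here $b_{c_1}$ has square-root-order $0$, so $b_{c_1}\neq0$ (in particular $b$ is a genuine bifurcation branch), whereas $b_{c_2}=0$; thus $b\notin\Delta_{\bowtie}$ and $b$ is not lifted from $N$. The case $f_i<0$ for all $i$ then follows by applying the above to $-f$: indeed $-f$ is admissible for $L$ with $(-f)_i=-f_i>0$, and $\mathcal{B}(L,-f)=\mathcal{B}(L,f)$ since $(-f)^L=-f^L$. I expect the main obstacle to be precisely the uniform positivity of the radicands in \ref{cond:ome6} throughout all layers beyond $C'_j$: this is where the same-sign hypothesis on the $f_i$ is indispensable, as it is what lets a single $\delta$ together with a single coherent choice of slope-signs work simultaneously for every active cell, whereas with mixed signs the sign of $\sum_{i\in A_d}f_i s_{\sigma_i(d)}$ may vary with $d$ and no global choice need exist.
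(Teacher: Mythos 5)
Your proposal is correct and follows essentially the same route as the paper: one constructs an element of $\Omega(L,f)$ in which exactly one of two $\bowtie$-identified cells of $C'_j$ is switched on at square-root-order $0$, propagates square-root-orders and slopes down the remaining layers, and invokes Proposition~\ref{prop:equivtheta} together with the uniqueness of the balanced coloring (Lemma~\ref{lem:unibalcolbackcon}) to conclude the resulting branch is not lifted. Your write-up is in fact a bit more explicit than the paper's on the key point — the uniform positivity of the radicands in \ref{cond:ome6}, which is exactly where the same-sign hypothesis on the $f_i$ enters — and your handling of the $f_i<0$ case via $-f$ and of the two equivalent cells without first reducing to a single split are harmless presentational variants.
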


\begin{proof}
Let $N$ be a feed-forward network with layers $C_0,\dots,C_m$, $f\in\mathcal{V}_0(N)$ generic and $L$ a feed-forward  lift of $N$ such that $L$ is backward connected and a lift inside a layer $C_j$, where $0< j< m-1$. 
Denote by $C'_j$ the $(j+1)$-layer of $L$ and by $(\sigma_i^L)_{i=1}^k$ the representative functions of $L$.
By Remark~\ref{obs:liftinslaycompsplit}, we assume that $L$ is the split of a cell $c\in C_0$ into two cells $c_1,c_2\in C'_0$ and denote by $\bowtie$ the balanced coloring in $L$ given by $c_1\bowtie c_2$. 
By Lemma~\ref{lem:unibalcolbackcon}, $\bowtie$ is the unique balanced coloring such that $L/\bowtie=N$. 

Assuming that $f_i>0$ for every $1\leq i\leq k$, we use Proposition~\ref{prop:equivtheta} to construct a bifurcation branch $b\in \mathcal{B}(L,f)$ such that $b\notin\Delta_{\bowtie}$. Define $\delta=\sign(f_{0\lambda})$, $p_a=-1$ and $s_a=0$, for $a\in C_0\cup\dots\cup C'_{j}\setminus \{c_1\}$, $p_{c_1}=0$ and $s_{c_1}=-2f_{0\lambda}/f_{00}$. We define the value of $p$ and $s$ by induction in the layers $C_{j+1},\dots,C_m$ in the following way: for $a\in C_l$, $j<l\leq m$, if $p_{\sigma^L_1(a)}=\dots=p_{\sigma^L_k(a)}=-1$ define $p_a=-1$ and $s_a=0$, otherwise define $p_a=\max\{p_{\sigma^L_1(a)}, \dots, p_{\sigma^L_k(a)}\}+1$ and
$$s_a=-\sign(f_{00}f_{0\lambda})\sqrt{\displaystyle -\frac{2\delta}{f_{00}}\sum_{i\in A(a)} f_i s_{\sigma_i(a)}},$$
where $A(a)=\{i: p_{\sigma^L_i(a)}= p_a-1\}$.

We have that $(\delta,(p_a)_a, (s_a)_a)\in\Omega(L,f)$ and $p_{c_1}\neq p_{c_2}$. By Proposition~\ref{prop:equivtheta}, there exists $b\in \mathcal{B}(L,f)$ such that $b\notin\Delta_{\bowtie}$. Thus there is a bifurcation branch of $f$ on $L$ not lifted from $N$.

The case $f_i<0$ for every $1\leq i\leq k$ is analogous. 
\end{proof}

Example~\ref{exe:liftinlastbutonelayerbackconenec} shows that the previous result is not valid if the lift is not backward connected. 
The network in Figure~\ref{fig:liftinlayerbackconenecleft} is not backward connected.

%

For lifts inside the second layer, we give sufficient conditions on the lift network structure and on the function $f\in\mathcal{V}_0(N)$ such that every bifurcation branch on the lift network is lifted from the original network.

\begin{prop}\label{prop:genkerincnonewbifseclay}
Let $N$ be a feed-forward network with layers $C_0,\dots,C_m$, $f\in\mathcal{V}_0(N)$ generic and $L$ a feed-forward lift of $N$ . Denote by $C'_1$ the second layer of $L$ and by  $(\sigma^L_i)_{i=1}^k$ the representative function of $L$. Assume that $L$ is the split of $c\in C_1$ into $c_1,c_2\in C'_1$ (and a lift inside $C_1$).

If for every $I\subseteq C'_1\setminus \{c_1,c_2\}$ there exist $d',d''\in C_2$ such that
$$(w_{I}^{d'}+w_1^{d'})(w_{I}^{d''}+w_1^{d''})<0 \wedge  (w_{I}^{d'}+w_2^{d'})(w_{I}^{d''}+w_2^{d''})<0,$$
where $w_{I}^{d}=\sum_{\sigma^L_i(d)\in I} f_i$, $w_{1}^{d}=\sum_{\sigma^L_i(d)=c_1} f_i$ and $w_{2}^{d}=\sum_{\sigma^L_i(d)=c_2} f_i$, then every bifurcation branch of $f$ on $L$ is lifted from $N$.
\end{prop}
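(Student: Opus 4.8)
The plan is to use Proposition~\ref{prop:equivtheta} to translate the question ``is every bifurcation branch of $f$ on $L$ lifted from $N$?'' into a purely combinatorial statement about the set $\Omega(L,f)$, and then to show that the stated sign hypothesis forces every element $(\delta,(p_a)_a,(s_a)_a)\in\Omega(L,f)$ to satisfy $p_{c_1}=p_{c_2}$ and $s_{c_1}=s_{c_2}$, which (together with the fact that $b_{c_1}$ and $b_{c_2}$ are then built by the same inductive step in the proof of Proposition~\ref{prop:equivtheta}) yields $b_{c_1}=b_{c_2}$, i.e. $b\in\Delta_\bowtie$. Since $[d]_\bowtie=d$ for every cell $d\notin\{c_1,c_2\}$ and $C_1=[C'_1]_\bowtie$ merges only $c_1,c_2$, once $b_{c_1}=b_{c_2}$ we get $b\in\Delta_\bowtie$ and $L/\bowtie=N$, so $b$ is lifted. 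First I would record that, by Remark~\ref{obs:liftinslaycompsplit}, it suffices to treat the case where $L$ is the split of a single cell $c\in C_1$ into $c_1,c_2\in C'_1$, with $\bowtie$ given by $c_1\bowtie c_2$ (this is already in the statement).

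Next I would analyse the square-root-orders. By condition~\ref{cond:ome3} applied in $L$, and the feed-forward structure, every cell in $C_0\cup C'_1$ has $p$-value in $\{-1,0\}$; in particular $p_{c_1},p_{c_2}\in\{-1,0\}$, and by~\ref{cond:ome5} the slope of any cell with $p=0$ in $C'_1$ is the fixed value $-2f_{0\lambda}/f_{00}$, while $p=-1$ forces slope $0$ by~\ref{cond:ome4}. So $s_{c_1}=s_{c_2}$ will follow as soon as $p_{c_1}=p_{c_2}$. Thus the whole content is: \emph{show that $p_{c_1}=0\iff p_{c_2}=0$ for any admissible $(\delta,(p_a)_a,(s_a)_a)$}. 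The only way one of $p_{c_1},p_{c_2}$ can be $0$ (without both being $0$, which is what we want to exclude the asymmetry of) while we are trying to derive a contradiction is that, say, $p_{c_1}=0$, $p_{c_2}=-1$; I would then let $I=\{a\in C'_1\setminus\{c_1,c_2\}: p_a=0\}$ and use conditions~\ref{cond:ome3} and~\ref{cond:ome6} at cells of $C_2$ to see that the branch can only propagate into $C_2$ (and hence, by backward connectedness/last-layer nontriviality, reach the last layer to actually be a bifurcation branch rather than the trivial one) if the sign conditions from~\ref{cond:ome6} are satisfiable at enough cells of $C_2$. Here is where the hypothesis enters: for the set $I$ above, the hypothesis produces cells $d',d''\in C_2$ with $(w_I^{d'}+w_1^{d'})(w_I^{d''}+w_1^{d''})<0$, which says precisely that the quantity $\sum_{i\in A(d)} f_i s_{\sigma_i^L(d)}$ (which, for a $C_2$-cell $d$ whose relevant inputs have $p=0$, is $-\tfrac{2f_{0\lambda}}{f_{00}}(w_I^d+w_1^d)$ when $c_1$ is ``active'' and the $I$-cells are active) has opposite signs at $d'$ and $d''$; by~\ref{cond:ome6} the existence of a real slope at a $p=1$ cell $d$ requires $-\tfrac{2\delta}{f_{00}}\sum_{i\in A(d)}f_is_{\sigma_i(d)}\ge 0$, so the branch must ``stop'' (have $p=-1$) at whichever of $d',d''$ fails the sign test — but then it stops at a cell all of whose later descendants it would need to feed, and by backward connectedness one shows it must in fact be the all-zero branch on the whole sub-network below $C_2$, contradicting $b$ being a genuine bifurcation branch unless the branch was already supported entirely ``outside'' $c_2$, i.e. $p_{c_2}=0$ too. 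Symmetrically one runs the argument with $c_1\leftrightarrow c_2$.

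The main obstacle — and the step I would spend the most care on — is the bookkeeping in the inductive ``propagation'' argument through layers $C_2,\dots,C_m$: one needs that a bifurcation branch of $f$ on $L$ cannot have $p_{c_1}=0$ while $p_{c_2}=-1$ \emph{and still manage to be nontrivial on the last layer}, and this requires combining~\ref{cond:ome3} (at each layer some input must realise the maximal $p$), the backward-connectedness of $L$ (every cell of $C_l$ is reached from the last layer, so ``killing'' the branch at all of $C_2$ kills it everywhere downstream), and the sign obstruction from~\ref{cond:ome6} that the hypothesis guarantees at $d'$ or $d''$. The clean way to package this is: assume for contradiction $p_{c_1}\neq p_{c_2}$, WLOG $p_{c_1}=0>p_{c_2}=-1$; set $I=\{a\in C'_1\setminus\{c_1,c_2\}:p_a=0\}$; pick $d',d''\in C_2$ from the hypothesis; note that for $d\in C_2$ the inputs with $p=0$ are exactly those landing in $I\cup\{c_1\}$ (since $p_{c_2}=-1$), so $A(d)=\{i:\sigma_i^L(d)\in I\cup\{c_1\}\}$ and $\sum_{i\in A(d)}f_is_{\sigma_i^L(d)}=-\tfrac{2f_{0\lambda}}{f_{00}}(w_I^d+w_1^d)$; the sign of this is opposite at $d'$ and $d''$, so~\ref{cond:ome6} fails at one of them, forcing $p$ of that cell to be $-1$; then by~\ref{cond:ome3} run downwards every cell in the backward-cone of that cell — which, by backward connectedness of $L$, is all of $C_3,\dots,C_m$ after finitely many layers — has $p=-1$, so $b$ restricted to $C_{j}$ for $j$ large is $0$, hence (again by~\ref{cond:ome4} and the last-layer cell) $b$ is the trivial branch, contradiction. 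Finally, having ruled this out, $p_{c_1}=p_{c_2}$, hence $s_{c_1}=s_{c_2}$, hence in the construction of Proposition~\ref{prop:equivtheta} the germs $b_{c_1},b_{c_2}$ are produced by the identical prescription (same $p$, same $s$, same $\delta$, and $\sigma^L_i$-inputs of $c_1$ and $c_2$ only matter through their already-equal slopes at the level $C_0$, which are all $0$), so $b_{c_1}=b_{c_2}$, $b\in\Delta_\bowtie$, and $b$ is lifted from $N$. $\qed$
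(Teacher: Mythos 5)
Your overall strategy --- translating the question into the combinatorics of $\Omega(L,f)$ via Proposition~\ref{prop:equivtheta}, observing that $p_a\in\{-1,0\}$ and $s_a=-(p_a+1)2f_{0\lambda}/f_{00}$ for $a\in C'_1$, setting $I=\{a\in C'_1\setminus\{c_1,c_2\}:p_a=0\}$, and extracting a sign obstruction at the pair $d',d''$ supplied by the hypothesis --- is exactly the paper's, and the first half of your argument is correct. The gap is the step where you say that \ref{cond:ome6} ``fails at one of them, forcing $p$ of that cell to be $-1$,'' followed by the propagation argument. If, say, $p_{c_1}=0$ and $p_{c_2}=-1$, then for $d\in C_2$ one has $\sum_{i\in A(d)}f_i=w_I^{d}+w_1^{d}$; the hypothesis gives $(w_I^{d'}+w_1^{d'})(w_I^{d''}+w_1^{d''})<0$, so both quantities are nonzero and hence both $d'$ and $d''$ have at least one input of square-root-order $0$. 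Condition~\ref{cond:ome2} then forbids $p_{d'}=-1$, and \ref{cond:ome3} forbids $p_{d'}=0$, so $p_{d'}=p_{d''}=1$ is \emph{forced} and \ref{cond:ome6} must hold at both cells. Since every order-$0$ cell of $C'_1$ has slope $-2f_{0\lambda}/f_{00}$, condition~\ref{cond:ome6} at $d$ amounts to $\delta f_{0\lambda}(w_I^{d}+w_1^{d})>0$; applying it at $d'$ and at $d''$ yields $(w_I^{d'}+w_1^{d'})(w_I^{d''}+w_1^{d''})>0$, the desired contradiction immediately. There is no admissible configuration in which one of $d',d''$ ``drops to $p=-1$,'' so the case your detour is designed to handle does not exist.

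Moreover, the detour itself would not close the argument if it were needed. You invoke backward connectedness of $L$, which is not a hypothesis of this proposition, and you use it to claim that the forward cone of a single cell of $C_2$ eventually covers all of $C_3,\dots,C_m$; backward connectedness only says that every cell has a directed path to the last cell, not that every downstream cell is reached from $d'$. A branch could vanish on $d'$ and all of its descendants while remaining nonzero on cells fed by other members of $C_2$, so the conclusion that $b$ is trivial does not follow. Once the forced $p_{d'}=p_{d''}=1$ observation is inserted, your proof collapses to the paper's: for every pair $d',d''\in C_2$ the product $\bigl(\sum_{i\in A(d')}f_i\bigr)\bigl(\sum_{i\in A(d'')}f_i\bigr)$ is always $\geq 0$ (strictly positive when both cells have order $1$, and zero when either index set is empty), which is incompatible with the strict negativity demanded by the hypothesis for the set $I$ determined by the branch.
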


\begin{proof}
Let $N$ be a feed-forward network with layers $C_0,\dots,C_m$, $f\in\mathcal{V}_0(N)$ generic and $L$ a feed-forward lift of $N$. Denote by $C'_1$ the second layer of $L$ and by  $(\sigma^L_i)_{i=1}^k$ the representative function of $L$. Assume that $L$ is the split of $c\in C_1$ into $c_1,c_2\in C'_1$.

We prove the result by contra position. Suppose that there exists $b\in \mathcal{B}(L,f)$ not lifted from $N$. Then $b_{c_1}\neq b_{c_2}$.
Let $(\delta,(p_a)_a,(s_a)_a)=\Theta(b)\in \Omega(L,f)$. For every $a\in C'_1$ we have that
$$p_a\in\{-1,0\}\quad \quad s_a=-(p_a+1)\dfrac{2 f_{0\lambda}}{f_{00}}.$$
Let $I=\{a\in C'_1\setminus \{c_1,c_2\}: p_a=0\}\subseteq C'_1\setminus \{c_1,c_2\}$. By \ref{cond:ome6}, for $d\in C_2$ such that $p_d=1$ we have that
$$s_d=\pm \dfrac{2}{f_{00}}\sqrt{\delta f_{0\lambda} \sum_{i\in A(d)} f_i},$$
where $A(d)=\{i: p_{\sigma^L_i(d)}=0\}$. Then $(\sum_{i\in A(d')} f_i)(\sum_{i\in A(d'')} f_i) >0$, if  $p_{d'}=p_{d''}=1$, $(\sum_{i\in A(d')} f_i)(\sum_{i\in A(d'')} f_i) =0$, if  $p_{d'}<1$ or $p_{d''}<1$, for every  $d',d''\in C_2$.
Thus $$\left(\sum_{i\in A(d')} f_i\right)\left(\sum_{i\in A(d'')} f_i\right) \geq 0,$$ for every $d',d''\in C_2$. Since $b_{c_1}\neq b_{c_2}$, $-1\leq p_{c_1}\neq p_{c_2}\leq 0$. If $p_{c_1}=0$ and $p_{c_2}=-1$, then $\sum_{i\in A(d)} f_i= w_I^d+w_1^d$. 
If $p_{c_1}=-1$ and $p_{c_2}=0$, then $\sum_{i\in A(d)} f_i= w_I^d+w_2^d$. So
$$(w_I^{d'}+w_1^{d'})(w_I^{d''}+w_1^{d''}) \geq 0 \vee (w_I^{d'}+w_2^{d'})(w_I^{d''}+w_2^{d''}) \geq 0,$$
for every $d',d''\in C_2$.
By contra position, we obtain the result.
\end{proof}

The next example shows that the previous condition is not necessary.

\begin{exe}\label{ex:sufcondisnotneccond2}
Returning to Example~\ref{ex:sufcondisnotneccond}, let $N$ be the feed-forward network of Figure~\ref{fig:sufcondisnotneccond}, $\bowtie$ the balanced coloring in $N$ given by the class $\{2, 3\}$ and $Q$ the quotient network of $N$ associated to $\bowtie$. 
The network $Q$ is a feed-forward network and $N$ is a lift inside the second layer.
Let $f\in\mathcal{V}_0(N)$ generic such that $(f_1+f_2)f_1>0$ and $f_3(f_2+f_3)<0$. 
Table~\ref{tab:sufcondisnotneccond} describes the possible bifurcation branches of $f$ on $N$. 
Examining the table, we can see that there is no bifurcation branch $b$ of $f$ on $N$ such that $b_2\neq b_3$. 
So $b\in\Delta_{\bowtie}$ and every bifurcation branch of $f$ on $N$ is lifted from $Q$.
However, the condition of Proposition~\ref{prop:genkerincnonewbifseclay} is not satisfied.
Let $w_2^4=f_1+f_2+f_3$, $w_2^5= f_2+f_3$, $w_2^6= f_3$, $w_3^4=0$, $w_3^5= f_1$ and $w_3^6= f_1+f_2$, then $w_3^4w_3^5=w_3^4w_3^5=0$, $w_3^6w_3^5=f_1(f_1+f_2)>0$.
\end{exe}

Next, we consider a lift inside an intermediate layer, except the second one, and give sufficient conditions on the lift network structure and on the function $f\in\mathcal{V}_0(N)$ such that no new bifurcation branch occurs besides the ones lifted from $N$. 
We will assume that the lift is a split of two cells which are the unique inputs cells of another two cells in the next layer.

\begin{prop}\label{prop:genkerincnonewbif}
Let $N$ be a feed-forward network with layers $C_0,\dots,C_m$, $f\in\mathcal{V}_0(N)$ generic, $L$ a feed-forward lift of $N$ and  $1<j\leq m-1$ . Denote by $C'_j$ the $(j+1)$ layer of $L$ and by $(\sigma^L_i)_{i=1}^k$ the representative function of $L$. Assume that $L$ is the split of $c\in C_j$ into $c_1,c_2\in C'_j$ (and a lift inside $C_j$).

If there exist $d',d''\in C_{j+1}$ such that $\sigma^L_i(d'),\sigma^L_i(d')\in\{c_1,c_2\}$, for every $1\leq i\leq k$, and 
$$w_1^{d'}w_1^{d''}<0\wedge w_2^{d'}w_2^{d''}<0\wedge w_1^{d'}w_1^{d''}+w_2^{d'}w_2^{d''}<w_1^{d'}w_2^{d''}+w_1^{d''}w_2^{d'},$$
where $w_{1}^{d}=\sum_{\sigma^L_i(d)=c_1} f_i$ and $w_{2}^{d}=\sum_{\sigma^L_i(d)=c_2} f_i$, then every bifurcation branch $b$ of $f$ on $L$ is lifted from $N$.
\end{prop}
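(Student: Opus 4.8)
The plan is to argue by contraposition, in the spirit of the proof of Proposition~\ref{prop:genkerincnonewbifseclay}. Write $\bowtie$ for the balanced coloring of $L$ with $c_1\bowtie c_2$, so that $L/\bowtie=N$. Suppose, for contradiction, that some bifurcation branch $b\in\mathcal{B}(L,f)$ is not lifted from $N$; then in particular $b\notin\Delta_{\bowtie}$, that is $b_{c_1}\neq b_{c_2}$. Put $(\delta,(p_a)_a,(s_a)_a)=\Theta(b)\in\Omega(L,f)$. The goal is to show that the three inequalities the hypothesis imposes on $w_1^{d'},w_1^{d''},w_2^{d'},w_2^{d''}$ cannot all hold, a contradiction.

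The first key step is a structural observation. Since $L$ is the split of $c$ inside $C_j$ with $j\geq1$, the inputs $\sigma^L_i(c_1),\sigma^L_i(c_2)$ all lie in $C_{j-1}$, which is disjoint from the layer $C_j$ containing $c_1,c_2$; balancedness of $\bowtie$ then forces $\sigma^L_i(c_1)=\sigma^L_i(c_2)$ for every $1\leq i\leq k$, so $c_1$ and $c_2$ receive exactly the same inputs. Combining this with conditions \ref{cond:ome2}, \ref{cond:ome3}, \ref{cond:ome5}, \ref{cond:ome6} and the uniqueness in Proposition~\ref{prop:equivtheta}, one checks that $b_{c_1}=b_{c_2}$ (as germs) if and only if $p_{c_1}=p_{c_2}$ and $s_{c_1}=s_{c_2}$. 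Hence $b_{c_1}\neq b_{c_2}$ splits into exactly two cases: case (A), in which $\{p_{c_1},p_{c_2}\}=\{-1,0\}$ and every common input of $c_1,c_2$ has square-root-order $-1$; and case (B), in which $p_{c_1}=p_{c_2}=q+1\geq1$ for some $q\geq0$ (the common inputs having maximal square-root-order $q$) and $s_{c_1}=-s_{c_2}\neq0$.

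The second step feeds the two cells $d',d''\in C_{j+1}$ of the hypothesis --- all of whose inputs lie in $\{c_1,c_2\}$ --- into the two cases. In case (A), say $p_{c_1}=0$ and $p_{c_2}=-1$ (the other sub-case being symmetric, swapping $c_1\leftrightarrow c_2$ and $w_1\leftrightarrow w_2$); since $w_1^{d'},w_1^{d''}\neq0$, each of $d',d''$ has an input pointing to $c_1$, so by \ref{cond:ome2}--\ref{cond:ome3} one gets $p_{d'}=p_{d''}=1$ (in particular $\delta=\pm1$ by \ref{cond:ome1}) and $A_d=\{i:\sigma^L_i(d)=c_1\}$ for $d\in\{d',d''\}$, whence \ref{cond:ome5}--\ref{cond:ome6} force $\delta f_{0\lambda}w_1^{d}>0$ for $d\in\{d',d''\}$ and thus $w_1^{d'}w_1^{d''}>0$, contradicting the hypothesis. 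In case (B) all inputs of $d'$ and $d''$ have square-root-order $q+1$, so $p_{d'}=p_{d''}=q+2$ and $A_d=\{1,\dots,k\}$; using $s_{c_2}=-s_{c_1}$, condition \ref{cond:ome6} gives $s_d=\pm\sqrt{-\tfrac{2\delta}{f_{00}}\,s_{c_1}(w_1^{d}-w_2^{d})}$, and since $s_{c_1}\neq0$ and $\delta=\pm1$, reality of $s_{d'}$ and $s_{d''}$ forces $(w_1^{d'}-w_2^{d'})(w_1^{d''}-w_2^{d''})>0$, i.e.\ $w_1^{d'}w_1^{d''}+w_2^{d'}w_2^{d''}>w_1^{d'}w_2^{d''}+w_1^{d''}w_2^{d'}$, again contradicting the hypothesis. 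Therefore no such $b$ exists: every bifurcation branch of $f$ on $L$ satisfies $b_{c_1}=b_{c_2}$, lies in $\Delta_{\bowtie}$, and is lifted from $N$.

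The main obstacle I anticipate is the preliminary dichotomy of the second paragraph. One must carefully use the equality of the input lists of $c_1$ and $c_2$ together with \ref{cond:ome2}--\ref{cond:ome3} to rule out $p_{c_1}\neq p_{c_2}$ outside $\{-1,0\}$ and to compute the sets $A_d$, use \ref{cond:ome4}--\ref{cond:ome5} to pin the slopes when $p=0$, and use \ref{cond:ome6} and the uniqueness in Proposition~\ref{prop:equivtheta} to pass from ``$b_{c_1}=b_{c_2}$ as germs'' to ``$p_{c_1}=p_{c_2}$ and $s_{c_1}=s_{c_2}$'' --- in effect, that two components sharing inputs, square-root-order and domain can differ only through the $\pm$ sign in \ref{cond:ome6}. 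Once this is established, cases (A) and (B) reduce to the short sign computations with \ref{cond:ome6} above, tailored so that the first two displayed inequalities eliminate case (A) and the third eliminates case (B).
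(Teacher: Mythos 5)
Your proposal is correct and follows essentially the same route as the paper: the same trichotomy on $(p_{c_1},p_{c_2},s_{c_1},s_{c_2})$ arising from $b_{c_1}\neq b_{c_2}$, and the same sign constraints extracted from \ref{cond:ome6} at $d'$ and $d''$, with the first two hypothesis inequalities killing the order-$\{-1,0\}$ cases and the third killing the opposite-slope case. Your only addition is an explicit justification (via $\sigma^L_i(c_1)=\sigma^L_i(c_2)$ and \ref{cond:ome2}--\ref{cond:ome6}) of the case split that the paper asserts without proof, which is a welcome but not structurally different refinement.
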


\begin{proof}
Let $N$ be a feed-forward network with layers $C_0,\dots,C_m$, $f\in\mathcal{V}_0(N)$ generic, $L$ a feed-forward lift of $N$ and $1<j\leq m-1$. Denote by $C'_j$ the $(j+1)$ layer of $L$ and by $(\sigma^L_i)_{i=1}^k$ the representative function of $L$. Assume that $L$ is the split of $c\in C_j$ into $c_1,c_2\in C'_j$.
Let $b\in\mathcal{B}(L,f)$, $(\delta,(p_a)_a,(s_a)_a)=\Theta(b)\in \Omega(L,f)$ and $d',d''\in C_{j+1}$ such that $\sigma^L_i(d'),\sigma^L_i(d')\in\{c_1,c_2\}$, for $1\leq i\leq k$.
 
Suppose that $b_{c_1}\neq b_{c_2}$. Then $p_{c_1}=0\wedge p_{c_2}=-1$ or $p_{c_1}=-1\wedge p_{c_2}=0$ or $p_{c_1}=p_{c_2}>0\wedge s_{c_1}=-s_{c_2}$. We have that $w_1^{d'}w_1^{d''}\geq 0$,  if $p_{c_1}=0\wedge p_{c_2}=-1$. And $w_2^{d'}w_2^{d''}\geq 0$,  if $p_{c_1}=-1\wedge p_{c_2}=0$. If $p_{c_1}=p_{c_2}>0\wedge s_{c_1}=-s_{c_2}$, then $p_{d'}=p_{d'}=p_{c_1}+1$ and
$$s_{d'}=\pm \sqrt{-\dfrac{2\delta}{f_{00}} (w^{d'}_1-w^{d'}_2)s_{c_1}},\quad\quad s_{d''}=\pm \sqrt{-\dfrac{2\delta}{f_{00}} (w^{d''}_1-w^{d''}_2)s_{c_1}}.$$
Thus $(w^{d'}_1-w^{d'}_2)(w^{d''}_1-w^{d''}_2)>0$. Generically, $(w^{d'}_1-w^{d'}_2)(w^{d''}_1-w^{d''}_2)\neq 0$.

Therefore if $b\in\mathcal{B}(L,f)$ and $d',d''\in C_{j+1}$ such that $\sigma^L_i(d'),\sigma^L_i(d')\in\{c_1,c_2\}$, for $1\leq i\leq k$,
$w_1^{d'}w_1^{d''}<0\wedge w_2^{d'}w_2^{d''}<0\wedge w_1^{d'}w_1^{d''}+w_2^{d'}w_2^{d''}<w_1^{d'}w_2^{d''}+w_1^{d''}w_2^{d'}$, then $b_{c_1}= b_{c_2}$. And $b\in \mathcal{B}(L,f)$ is lifted from $N$.
\end{proof}

When the splitted cells only target one cell, the lift network structure can allow asynchronized bifurcation branches. 
When the splitted cells are not the unique source cells of two cells in the next layer, the lift network structure or the strength of the connections can allow asynchronized bifurcation branches. In the next example we present a lift network $L$ which is a split of two cells that are not the unique inputs cells of another two cells in the next layer and independently of the regular function there exists a bifurcation branch on $L$ which is not lifted from $N$.

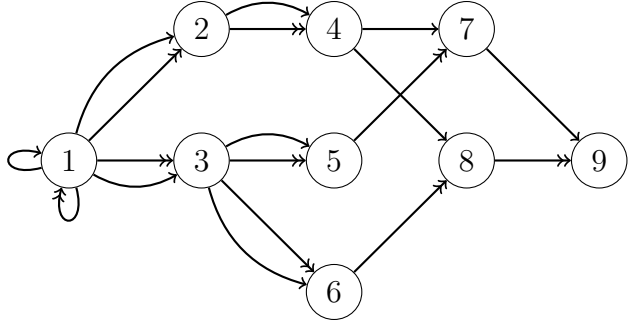
\begin{figure}[h]
\center
\begin{tikzpicture}
\node (n1) [circle,draw, label=center:1]   {\phantom{0}};
\node (n3) [circle,draw, label=center:3] [right=of n1]  {\phantom{0}};
\node (n2) [circle,draw, label=center:2] [above=of n3]  {\phantom{0}};
\node (n4) [circle,draw, label=center:4] [right=of n2]  {\phantom{0}};
\node (n51) [circle,draw, label=center:5] [right=of n3]  {\phantom{0}};
\node (n52) [circle,draw, label=center:6] [below=of n51] {\phantom{0}};
\node (n6) [circle,draw, label=center:7] [right=of n4]  {\phantom{0}};
\node (n7) [circle,draw, label=center:8] [right=of n51]  {\phantom{0}};
\node (n8) [circle,draw, label=center:9] [right=of n7]  {\phantom{0}};

\draw[->, thick] (n1) to [loop left] (n1);
\draw[->, thick] (n1) to  [bend left]  (n2);
\draw[->, thick] (n1) to  [bend right] (n3);
\draw[->, thick] (n2) to  [bend left] (n4);
\draw[->, thick] (n3) to  [bend left] (n51);
\draw[->, thick] (n3) to  [bend right](n52);
\draw[->, thick] (n4) to  (n6);
\draw[->, thick] (n4) to   (n7);
\draw[->, thick] (n6) to   (n8);

\draw[->>, thick] (n1) to [loop below] (n1);
\draw[->>, thick] (n1) to  (n2);
\draw[->>, thick] (n1) to  (n3);
\draw[->>, thick] (n2) to  (n4);
\draw[->>, thick] (n3) to  (n51);
\draw[->>, thick] (n3) to  (n52);
\draw[->>, thick] (n51) to  (n6);
\draw[->>, thick] (n52) to  (n7);
\draw[->>, thick] (n7) to  (n8);
\end{tikzpicture}
\caption{A network $L$ with a quotient network $N$ obtained by the balanced coloring $\bowtie$ given by $5\bowtie 6$. 
If $f\in\mathcal{V}_0(N)$, then there exists a bifurcation branch of $f$ on $L$ not lifted from $N$.
}
\label{fig:spiltnotuniqueinput}
\end{figure}

\begin{exe}\label{ex:spiltnotuniqueinput}
Let $L$ be the feed-forward network of Figure~\ref{fig:spiltnotuniqueinput}, $\bowtie$ the balanced coloring in $L$ given by $5 \bowtie 6$ and $N$ the quotient network of $L$ associated to $\bowtie$. 
The network $N$ is a feed-forward network and $L$ is a lift inside the third layer.
Let $f\in\mathcal{V}_0(N)$ generic. Then there exists a bifurcation branch of $f$ on $L$ not lifted from $N$.

Let $\delta=\sign(f_{0\lambda}(f_1+f_2))$, $p_1=p_3=p_{6}=-1$,  $p_2=p_{5}=0$, $p_4=1$, $p_7=p_8=2$, $p_9=3$, $s_1=s_3=s_{6}=0$, 
$$s_2=s_{5}=- \sign(f_1)\delta \dfrac{2|f_{0\lambda}|}{f_{00}},\quad s_4=-\sign(f_{0\lambda})\dfrac{2\sqrt{|f_{0\lambda}|}}{f_{00}}\sqrt{\left|f_1 +f_2 \right|},$$
$$s_7= s_8 = -\sign(f_{0\lambda})\dfrac{2\sqrt[4]{|f_{0\lambda}|}}{f_{00}}\sqrt{\displaystyle |f_1| \sqrt{\left|f_1 +f_2 \right|}}$$
and
$$s_9=\dfrac{2\sqrt[8]{|f_{0\lambda}|}}{f_{00}}\sqrt{\displaystyle |f_1 + f_2| \sqrt{\displaystyle |f_1| \sqrt{\left|f_1 +f_2 \right|}}}$$


Note that  $(\delta,(p_a)_a,(s_a)_a)\in\Omega(L,f)$. Let $b\in \mathcal{B}(L,f)$ be the bifurcation branch associated to $(\delta,(p_a)_a,(s_a)_a)$. Since $b$ has square-root order $3$, it can be lifted from $N$ if and only if $b_{5}=b_{6}$. However $p_{5}\neq p_{6}$ and $b$ is not lifted from $N$.
\end{exe}

\section*{Acknowledgments}
I thank Manuela Aguiar and Ana Paula Dias for helpful discussions and guidance throughout this work.
The research of the author was supported by FCT (Portugal) through the PhD grant PD/BD/105728/2014 and partially supported by CMUP (UID/ MAT/00144/ 2013), which is funded by FCT with national (MEC) and European structural funds (FEDER), under the partnership agreement PT2020.

\end{document}